\documentclass[11pt]{amsart}

\usepackage{amsmath}
\usepackage{amssymb}
\usepackage{latexsym}
\usepackage{amscd}
\usepackage{citesort}
\usepackage{mathrsfs}
\usepackage[all]{xy}




\newdimen\AAdi%
\newbox\AAbo%
%
\def\AAk#1#2{\s_etbox\AAbo=\hbox{#2}\AAdi=\wd\AAbo\kern#1\AAdi{}}%
\def\AAr#1#2#3{\s_etbox\AAbo=\hbox{#2}\AAdi=\ht\AAbo\raise#1\AAdi\hbox{#3}}%
\font\tenmsb=msbm10 at 12pt \font\sevenmsb=msbm7 at 8pt
\font\fivemsb=msbm5 at 6pt
\newfam\msbfam
\textfont\msbfam=\tenmsb \scriptfont\msbfam=\sevenmsb
\scriptscriptfont\msbfam=\fivemsb

\textwidth 15.0cm \textheight 22cm \topmargin 0cm \oddsidemargin
0.5cm \evensidemargin 0.5cm
\parindent = 5 mm
\hfuzz     = 6 pt
\parskip   = 3 mm

\newtheorem{theorem}{Theorem}

\newtheorem{remark}[theorem]{Remark}

\newtheorem{lemma}[theorem]{Lemma}
\newtheorem{proposition}[theorem]{Proposition}

\numberwithin{equation}{section} \numberwithin{theorem}{section}


\setlength{\oddsidemargin}{0cm}
\renewcommand{\topmargin}{0cm}
\renewcommand{\oddsidemargin}{5mm}
\renewcommand{\evensidemargin}{5mm}
\renewcommand{\textwidth}{150mm}
\renewcommand{\textheight}{230mm}

\def\R{\mathbb R}

\def\Sp{\mathbb S}

\def\na{\nabla}

\def\f#1#2{\frac{#1}{#2}}

\def\a{\alpha}
\def\be{\beta}

\def\r{\Re_{I\!V}}

\def\p#1{\partial #1}

\def\de{\delta}
\def\De{\Delta}
\def\e{\eta}
\def\ep{\epsilon}

\def\G{\Gamma}
\def\g{\gamma}

\def\la{\lambda}

\def\lan{\langle}
\def\ran{\rangle}

\def\Om{\Omega}
\def\th{\theta}

\def\si{\sigma}
\def\Si{\Sigma}

\def\r{\rho}

\def\div{\mathrm{div}}

\begin{document}

\title
{Entire spacelike translating solitons in Minkowski space}
\author{Qi Ding}
\address{Institute of Mathematics, Fudan University,
Shanghai 200433, China}
\email{09110180013@fudan.edu.cn}

\begin{abstract}
In this paper, we study entire spacelike translating solitons in Minkowski space. By constructing convex spacelike solutions to \eqref{tsl} in bounded convex domains, we obtain many entire smooth convex strictly spacelike translating solitons by prescribing boundary data at infinity.
\end{abstract}

\maketitle

\section{Introduction}

Let $\R^{n+1}_1$ be the Minkowski space $(\R^{n+1},\bar{g})$ with the Lorentz metric
$$\bar{g}=\sum_{i=1}^ndx_i^2-dx_{n+1}^2.$$
We will say that a hypersurface $\Si=\{(x,w(x))\mid x\in\Om\}\subset \R^{n+1}_1$ is \emph{strictly spacelike}, if $w\in C^1(\Om)$ and $|Dw|<1$ in $\Om$; $\Si$ is \emph{weakly spacelike}, if $w\in C^{0,1}(\Om)$ and $|Dw|\le1$ a.e. in $\Om$. Here, $C^{0,1}(\Om)$ is the class of locally Lipschitz functions in $\Om$. For convenience, we often call strictly (weakly) spacelike hypersurfaces by the functions whose graphs are them.

Mean curvature flow in Minkowski space is a family of smooth strictly spacelike embeddings
$X_t=X(\cdot,t): \R^n\rightarrow \R^{n+1}_1$
with corresponding images $M_t=X_t(\R^n)$ satisfying the following evolution equation
\begin{equation}\label{MCFE}
\f{dX}{dt}=\overrightarrow{H}
\end{equation}
on some time interval, where $\overrightarrow{H}$ is the mean curvature vector of $M_t$ in $\R^{n+1}_1$. Every $M_t$ is the graph of a function $U(\cdot,t)$ with $|DU(\cdot,t)|<1$. Equation \eqref{MCFE} is equivalent up to diffeomorphisms in $\R^n$ to the equation
\begin{equation}\label{MCFt}
\f{\p U}{\p t}=\sqrt{1-|DU|^2}\div\left(\f{DU}{\sqrt{1-|DU|^2}}\right),
\end{equation}where '$\div$' is the divergence on $\R^n$.
There is an important class of solutions for \eqref{MCFt} in $\R^{n}$ which moves by vertical translation. This solution is called \emph{Translating Soliton}, i.e.,  $$x_{n+1}=U(x,t)=u(x)+t,$$
where $u$ satisfies the elliptic equation
\begin{equation}\aligned\label{tsl}
\div\left(\f{Du}{\sqrt{1-|Du|^2}}\right)=\f1{\sqrt{1-|Du|^2}}\qquad \mathrm{in}\ \R^n.
\endaligned
\end{equation}

Mean curvature flow in the ambient Minkowski space and Lorentzian manifold has been studied extensively (see \cite{E1}\cite{E}\cite{E3}\cite{EH}\cite{HY} for example).
Translating solitons can be regarded as a natural way of foliating spacetimes by almost null like hypersurfaces. Particular examples may give insight into the structure of certain spacetimes at null infinity and have possible applications in general relativity \cite{E}. Convex translating solitons in Euclidean space $\R^{n+1}$, namely, the graphic functions satisfy
\begin{equation}\aligned\label{tseuc}
\div\left(\f{Du}{\sqrt{1+|Du|^2}}\right)=\f1{\sqrt{1+|Du|^2}}\qquad \mathrm{in}\ \R^n,
\endaligned
\end{equation}
are arising at type II singularities of mean curvature flow by Huisken and Sinestrari \cite{HS1}\cite{HS2}. X.-J. Wang in \cite{W1} showed that the convex solutions to \eqref{tseuc} must be rotationally symmetric in an appropriate coordinate system for $n=2$, and constructed non-rotationally symmetric entire convex translating solitons for $n\ge3$.

In the case $n=1$, $u_0(x)=\log \cosh x$ is a particular solution to \eqref{tsl}. In \cite{E}, Ecker constructed a radially symmetric solution to \eqref{tsl} for general $n$. Later, H.-Y. Jian \cite{J} gave a detailed discussion for this radially symmetric solution. In this paper, we only consider the case $n\ge2$ and construct many entire smooth convex strictly spacelike translating solutions which are asymptotic to all the functions in $Q$ but linear functions at infinity.

Here $Q$ is a set defined as follows: if $w\in Q$, then $w$ is a convex homogeneous of degree one function and $|Dw(y)|=1$ for any $y\in\R^n$ where the gradient exists. Each function $w\in Q$ could be represented as $w(x)=\sup_{\la\in\Lambda}\lan\la,x\ran$ where $\Lambda\subset\Sp^{n-1}$ is a closed set of unit vectors and $\lan\ ,\ \ran$ is the standard inner product in $\R^n$. Conversely, for every closed set $\Lambda\subset\Sp^{n-1}$, $\sup_{\la\in\Lambda}\lan\la,x\ran\in Q$ (please see \cite{CT}\cite{T} for details).
For any weakly spacelike function $w$ in $\R^n$, we define $$V_w(x)=\lim_{r\rightarrow\infty}\f{w(rx)}r$$ if such a limit exists for every $x\in\R^n$. And we say that $V_w$ is the \emph{blow down} of $w$. The blow down is well-defined for convex weakly spacelike functions \cite{T}.

A famous Calabi-Cheng-Yau \cite{C}\cite{CY} result tells us non-existence of nontrivial complete maximal spacelike hypersurface in Minkowski space. Cheng-Yau \cite{CY} also showed that any spacelike hypersurface $\Si$ of nonzero constant mean curvature has nonpositive Ricci curvature, namely that the function whose graph is $\Si$ is convex (see also \cite{T}). For every function $V\in Q$ but linear functions
Treibergs in \cite{T} could find a function $u$ blowing down to $V$ and the graph of $u$ is a strictly spacelike constant mean curvature hypersurface. Compared to this work of Treibergs we get the following Theorem for translating solitons.
\begin{theorem}\label{Vcu}
Let $Q$ be defined as above. For $n\ge2$ and any function $V$ in $Q$ except linear functions there is an entire smooth convex strictly spacelike solution $u$ to \eqref{tsl} such that $u$ blows down to $V$, namely,
$\lim_{r\rightarrow\infty}\f{u(rx)}r=V(x)$ for each $x\in\R^n.$
\end{theorem}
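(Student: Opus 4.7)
The strategy is parallel to Treibergs's construction \cite{T} for constant-mean-curvature hypersurfaces, adapted to the translating-soliton equation \eqref{tsl}, with the Ecker-Jian rotationally symmetric translator \cite{E}\cite{J} available as a reference profile. Since $V\in Q$ is not linear, write $V(x)=\sup_{\la\in\La}\lan\la,x\ran$ for a closed set $\La\subset\Sp^{n-1}$ containing at least two distinct unit vectors. The plan is to solve a sequence of Dirichlet problems for \eqref{tsl} on an exhausting family of bounded convex domains and pass to the limit.

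Choose an increasing sequence of bounded smooth convex domains $\Om_k\nearrow\R^n$ together with strictly spacelike boundary data $\varphi_k\in C(\partial\Om_k)$ that approximate $V|_{\partial\Om_k}$ from below (for instance, a slight inward regularization of $V$ making $|D\varphi_k|<1$). Using the Dirichlet solvability for \eqref{tsl} on bounded convex domains established earlier in the paper, obtain convex strictly spacelike solutions $u_k\in C^\infty(\Om_k)\cap C(\overline{\Om}_k)$ with $u_k=\varphi_k$ on $\partial\Om_k$. Each linear function $x\mapsto\lan(1-\delta)\la,x\ran$ with $\la\in\La$ and $\delta\in(0,1)$ satisfies $\mathrm{div}(D\cdot/\sqrt{1-|D\cdot|^2})=0<1/\sqrt{1-(1-\delta)^2}$ and is therefore a strict subsolution of \eqref{tsl}, so by comparison $u_k(x)\ge(1-\delta)\lan\la,x\ran-C_{\la,\delta,k}$ with $C_{\la,\delta,k}\to 0$ as $k\to\infty$; similarly $u_k\le V$ everywhere on $\Om_k$.

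Next derive uniform interior estimates. The two-sided sandwich yields uniform $L^\infty$ bounds on compact subsets. Combined with convexity of $u_k$, this promotes the $L^\infty$ bound to a gradient bound $|Du_k|\le c_K<1$ on each compact $K\subset\R^n$; the essential input is nonlinearity of $V$, which prevents the upper barrier from coinciding with any single supporting hyperplane in an interior neighborhood and thereby leaves room for the gradient to stay strictly inside the null cone. Once the gradient is uniformly bounded away from $1$, \eqref{tsl} is uniformly elliptic and Evans-Krylov together with Schauder theory give uniform $C^{k,\alpha}_{\mathrm{loc}}$ estimates. Arzela-Ascoli then extracts a subsequential limit $u\in C^\infty(\R^n)$ which is convex, strictly spacelike, and solves \eqref{tsl}.

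It remains to identify the blow-down. The inequality $V_u\le V$ is inherited from $u\le V$ together with $1$-homogeneity of $V$. For the reverse inequality, passing to the limit in the lower barrier gives $u(x)\ge(1-\delta)\lan\la,x\ran-C_{\la,\delta}$ for every $\la\in\La$ and $\delta>0$; dividing by $r$ along a ray, letting $r\to\infty$, then $\delta\to 0$, and finally taking the supremum over $\la\in\La$ yields $V_u\ge V$. The central difficulty is the uniform gradient estimate: both sandwich barriers have slope reaching $1$ in the limit, so producing an interior gradient bound strictly less than $1$ genuinely requires the convexity of $u_k$, the fact that $V$ is not a single linear function, and the strong maximum principle for the uniformly elliptic form of \eqref{tsl}; this is precisely the obstacle that forces the exclusion of linear $V$ from the hypothesis.
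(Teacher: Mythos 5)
Your skeleton (exhaust $\R^n$ by bounded convex domains, solve Dirichlet problems, use convexity for compactness, sandwich by barriers, pass to the limit) is the right one, but three of the load-bearing steps are either unjustified or run in the wrong direction. First, convexity of the approximating solutions $u_k$: the paper's only convexity mechanism (Lemma \ref{Conv}) applies when the boundary of the domain is a \emph{level set} of the solution, i.e.\ the boundary data is constant, and additionally requires the boundary's mean curvature to satisfy $H_{\p\Om_k}\le1$; Theorem \ref{Dirichlet} likewise only produces solutions with constant boundary data. Your $u_k$ with non-constant data $\varphi_k\approx V|_{\p\Om_k}$ have no reason to be convex, and no tool in the paper gives this. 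This is precisely why the paper replaces $V$ by $\widetilde V=\max\{V,|x|-K\}$ (note that for, say, $V(x)=|x_1|$ the sublevel sets of $V$ itself are unbounded slabs), mollifies with a parameter tuned so that the level sets $\{\widetilde V_\ep=k\}$ have mean curvature between $0$ and $1$, and then solves with the constant data $u_k=k$ on these level sets; the parameter $K$ is removed only at the very end by a second limit. This construction is the heart of the proof and is absent from your proposal.

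Second, your barriers point the wrong way. By Lemma \ref{max}, a function with $\mathcal{L}\overline u\le1$ is an \emph{upper} barrier; since $\mathcal{L}$ annihilates linear functions, $(1-\de)\lan\la,x\ran$ lies \emph{above} a solution with the same boundary values (compare $n=1$: $\log\cosh x\le\log\cosh R$ on $[-R,R]$), so it cannot give your lower bound. The correct lower barriers are the exactly null hyperplanes $\lan\la,x\ran$ with $|\la|=1$, which work for a different reason (an interior maximum of $u-\lan\la,\cdot\ran$ would force $|Du|=1$; the paper invokes Lemma \ref{comp}). Symmetrically, the claim $u_k\le V$ has no justification: $V$ is a supremum of null hyperplanes, so controlling $u_k$ by each $\lan\la,\cdot\ran$ from above is not implied by $u_k\le V$ on the boundary, and $V$ is not a classical supersolution. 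The paper instead uses the radially symmetric translator $\psi$ centered at boundary points as the upper barrier and only obtains $u_k\le\widetilde V+n+\ep$, with an additive constant that is harmless for the blow-down. Third, the strict spacelikeness of the limit, which you correctly flag as the central difficulty, is not resolved by your argument: you cannot invoke the strong maximum principle for the "uniformly elliptic form" of \eqref{tsl} before you have the very gradient bound you are trying to prove. What convexity plus the equation actually give is $\sum_i\p_{ii}u_k\le1$, hence uniform $C^{1,1}$ bounds and a limit with $|Du|\le1$ only; ruling out $|Du|=1$ requires showing the limit is a weak maximizer of the functional $F$, applying the light-ray propagation Lemma \ref{line}, and using that the prescribed asymptotics have slope strictly less than one along every line (this is where non-linearity of $V$ enters) -- the content of Lemma \ref{Converge}. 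As written, your proof has genuine gaps at each of these three points.
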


In \cite{A}, M. Aarons gave a conjecture which asks:
\emph{For $a>0$, whether there is a solution $u$ of
\begin{equation}\aligned\label{tsf}
\div\left(\f{Du}{\sqrt{1-|Du|^2}}\right)=\f a{\sqrt{1-|Du|^2}}+c\qquad \mathrm{in}\ \R^n,
\endaligned
\end{equation}
which blows down to every $V$ in $Q$.}

We may use the technique for proving Theorem \ref{Vcu} to study \eqref{tsf} with $c>0$, and obtain an existence result for \eqref{tsf} in $\S$ 6.

Every entire spacelike graph with constant mean curvature hypersurface in $\R^{n+1}_1$ is complete \cite{CY}.
However, translating solitons in $\R^{n+1}_1$ may be not complete, and the mean curvatures must be unbounded.
A nature question is whether every entire strictly spacelike function $u$ to \eqref{tsl} is convex? H.-Y. Jian also asked this question in \cite{J}.

In the present paper, we find a variational functional $F$ defined by \eqref{Var} for equation \eqref{tsl}, and show that any translating soliton is maximal for $F$, namely, any variation with compact support do not increase its area under the weight $e^{-x_{n+1}}$(see Theorem \ref{uni}). This help us to establish a comparison principle of weak solutions to $F$.
By constructing barrier functions we study light ray within the weak solution hypersurface, which plays a key role for showing that the limit function of a sequence of strictly spacelike solutions to \eqref{tsl} is strictly spacelike.

Through a calculation for the second fundamental form of translating solitons, convexity of the bounded level set for any solution to \eqref{tsl} could imply convexity of the corresponding sublevel set with a restriction of the mean curvature of the level set. Then it is able to solve a class of Dirichlet problems in smooth bounded convex domains, see Theorem \ref{Dirichlet}.
We construct auxiliary functions to seek out a sequence of convex solutions $\{u_k\}$ to \eqref{tsl} in different bounded domains which is asymptotic to prescribing function at infinity. Convexity of $u_k$ could enable us to get the uniform bound for Hessian of $u_k$, which is crucial to show that their limit $u$ is the desired function in Theorem \ref{Vcu}.

{\bf Acknowledgement} The author would like to express his
sincere thanks to Yuanlong Xin for his continuous support and valuable comments on this paper.

\section{Translating solitons from variational view}

In this section, we always suppose that $\Om$ is a bounded domain in $\R^n$.
Let $M=\{(x,u(x))\mid x\in\Om\}$ be a weakly spacelike hypersurface in $\R^{n+1}_1$ with volume element $d\mu=\sqrt{1-|Du|^2}dx$. We define a functional $F$ by 
\begin{equation}\aligned\label{Var}
F(M)=\int_{M}e^{-x_{n+1}}d\mu=\int_\Om e^{-u(x)}\sqrt{1-|Du|^2}dx.
\endaligned
\end{equation}
If $u$ is smooth and $|Du|<1$ in $\Om$, then $M$ is a Riemannian manifold with the metric $g_{ij}dx_idx_j$, where $g_{ij}=\de_{ij}-u_iu_j$. Let $\lan\ ,\ \ran$ be indefinite inner product in $\R^{n+1}_1$ endowed by the Lorentz metric $\bar{g}$ (This definition does not conflict with $n$-dimensional Euclidean inner product $\lan\ ,\ \ran$ in $\S$ 1). The normal vector field $\nu=\f1{\sqrt{1-|Du|^2}}\big(\sum_{i=1}^nu_iE_i+E_{n+1}\big)$ satisfies $\lan\nu,\nu\ran=-1$, where $E_1,\cdots,E_{n+1}$ is the unit natural basis of $\R^{n+1}_1$.

If $M_s$ is a variation of $M$ with $s\in(-1,1)$ and $M_0=M$, where the variation vector filed $M_0'=f\nu$ and $f$ is smooth with $f\mid_{\p M_0}\equiv0$. By a calculation (see \cite{X} for Euclidean case or \eqref{FWwe} for $W\equiv0$), we have
\begin{equation}\aligned\label{v1st}
\f{d}{ds}F(M_s)\bigg|_{s=0}=\int_{M}\big(-fH-f\lan\nu,E_{n+1}\ran\big)e^{-x_{n+1}}d\mu,
\endaligned
\end{equation}
where $H=\div\left(\f{Du}{\sqrt{1-|Du|^2}}\right)$.
We will say that $M$ is a \emph{critical point} for the functional $F$ if it is critical with respect to all normal variations in $M$. Hence,
$M$ is a critical point for $F$ if and only if the mean curvature of $M$ satisfies
\begin{equation}\aligned\label{HnuE}
H=-\lan\nu,E_{n+1}\ran.
\endaligned
\end{equation}
\eqref{HnuE} is just \eqref{tsl}.
Let $g^{ij}=\de_{ij}+\f{u_iu_j}{1-|Du|^2}$, then \eqref{tsl} can be rewritten as
\begin{equation}\aligned\label{ts}
\sum_{i,j}g^{ij}u_{ij}=\sum_{i,j}\left(\de_{ij}+\f{u_iu_j}{1-|Du|^2}\right)u_{ij}=1.
\endaligned
\end{equation}

Let $\varphi$ be a weakly spacelike function on $\overline{\Om}$ and $\mathcal{C}(\varphi,\Om)$ be a set defined by
$$\{w\in C^{0,1}(\Om)|\ w=\varphi\ \mathrm{on}\ \p\Om,\ \mathrm{and}\ |Dw|\le1\ a.e.\ \mathrm{in}\ \Om\}.$$
For any $\ep>0$, denote $\Om_\ep=\{x\in\Om\big|\ \inf_{y\in\p\Om}|x-y|\ge\ep\}$ and $\Om^\ep=\{x\in\R^n\big|\ \inf_{y\in\overline{\Om}}|x-y|\le\ep\}$. By the boundedness of $\overline{\Om}$, $\Om_\ep$ and $\Om^\ep$ are both closed sets.
Let $\r$ be a smooth function with compact support in $B_1(0)\subset\R^n$ and $\int_{\R^n}\r(x)dx=1$. Let $w_\ep$ be a mollifier of weakly spacelike function $w\in\mathcal{C}(\varphi,\Om)$ defined by
\begin{equation}\aligned
w_\ep(x)=(w*\r_\ep)(x)\triangleq\int_{\R^n}\r(y)w(x-\ep y)dy\qquad \mathrm{for}\ x\in\Om_\ep.
\endaligned
\end{equation}
Then
\begin{equation}\aligned\label{Dw}
\big|Dw_\ep(x)\big|^2=&\sum_i\left|\int_{\R^n}\r(y)\p_{x_i}w(x-\ep y)dy\right|^2\le\sum_i\int_{\R^n}\r(y)dy\int_{\R^n}\r(y)\big|\p_{x_i}w(x-\ep y)\big|^2dy\\
=&\int_{\R^n}\r(y)\big|Dw(x-\ep y)\big|^2dy\le\int_{\R^n}\r(y)dy=1.
\endaligned
\end{equation}
Moreover, $w_\ep\rightarrow w$ uniformly and $Dw_\ep\rightarrow Dw$ a.e. in any compact set $K\subset\Om$.

\begin{theorem}\label{uni}
Let $\Om$ be a bounded domain in $\R^n$ and $u$ be a smooth strictly spacelike solution to \eqref{tsl} in $\overline{\Om}$. Set $M=\{(x,u(x))\big|\ x\in\Om\}$, then for any bounded weakly spacelike hypersurface $\Si\subset \R^{n+1}_1$ with $\p\Si=\p M$, one has
\begin{equation}\aligned
\int_\Si e^{-x_{n+1}}dV_\Si\le\int_M e^{-x_{n+1}}dV_M,
\endaligned
\end{equation}
where the above inequality attains equality if and only if $\Si=M$.
\end{theorem}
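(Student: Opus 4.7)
The plan is a calibration argument. Interpreting $\Si$ as the graph of some $v\in\mc C(u,\Om)$ (the natural reading of weakly spacelike given the definitions in \S 1), so $F(\Si)=\int_\Om e^{-v}\sqrt{1-|Dv|^2}\,dx$, I extend the future-directed unit timelike normal of $M$ to all of $\overline{\Om}\times\R$ by
\begin{equation*}
X(x,t)=\f{e^{-t}}{\sqrt{1-|Du(x)|^2}}\bigl(Du(x),1\bigr),
\end{equation*}
so that $X|_M=e^{-x_{n+1}}\nu$. A direct computation using \eqref{tsl} shows
\begin{equation*}
\div_{\R^{n+1}}X=e^{-t}\div\!\Bigl(\f{Du}{\sqrt{1-|Du|^2}}\Bigr)-\f{e^{-t}}{\sqrt{1-|Du|^2}}\equiv 0.
\end{equation*}

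The heart of the proof is the calibration identity
\begin{equation*}
\Phi(v):=\int_\Om\f{e^{-v(x)}\bigl(1-Du(x)\cdot Dv(x)\bigr)}{\sqrt{1-|Du(x)|^2}}\,dx=F(M)\quad\text{for every }v\in\mc C(u,\Om),
\end{equation*}
which is the divergence-theorem restatement of $\div X\equiv 0$. I would prove it via the linear homotopy $h_s=u+s(v-u)$, $s\in[0,1]$: differentiating $\Phi(h_s)$ under the integral sign produces a zeroth-order term and a $Du\cdot D\dot h_s$ term; integrating the latter by parts (allowed because $\dot h_s=v-u$ vanishes on $\p\Om$) and applying \eqref{tsl} converts it into the negative of the former, so $\f{d}{ds}\Phi(h_s)\equiv 0$. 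Setting $s=0$ gives $\Phi(v)=\Phi(u)=\int_\Om e^{-u}\sqrt{1-|Du|^2}\,dx=F(M)$.

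The theorem then follows from the reverse Cauchy--Schwarz inequality for timelike vectors,
\begin{equation*}
\sqrt{1-|p|^2}\sqrt{1-|q|^2}\le 1-p\cdot q\quad\text{for }|p|,|q|\le 1,
\end{equation*}
with equality iff $p=q$. Applied pointwise to $p=Du(x)$, $q=Dv(x)$ and multiplied by $e^{-v(x)}$, this gives $e^{-v}\sqrt{1-|Dv|^2}\le\f{e^{-v}(1-Du\cdot Dv)}{\sqrt{1-|Du|^2}}$ a.e.\ in $\Om$, whence
\begin{equation*}
F(\Si)=\int_\Om e^{-v}\sqrt{1-|Dv|^2}\,dx\le\Phi(v)=F(M).
\end{equation*}
Equality a.e.\ forces $Du=Dv$ a.e., which combined with $u=v$ on $\p\Om$ forces $u\equiv v$ in $\Om$ and hence $\Si=M$.

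The main technical obstacle is the Lipschitz-only regularity of $v$, which prevents direct integration by parts in the calibration identity. I would first prove the identity for the smooth mollifications $v_\ep$ on $\Om_\ep$ (which satisfy $|Dv_\ep|\le 1$ by \eqref{Dw}), then pass to the limit $\ep\downarrow 0$ using $v_\ep\to v$ uniformly on compact subsets, $Dv_\ep\to Dv$ a.e., and dominated convergence with the strict bound $|Du|<1$ on $\overline{\Om}$ providing the required control on the integrands.
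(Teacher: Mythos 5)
Your argument is correct and is at heart the same calibration the paper uses: your field $X$ is precisely the paper's $Y$, your $\Phi(v)$ is precisely the flux of $Y$ through the graph of $v$, and the concluding step is the same reversed Cauchy--Schwarz inequality for timelike vectors with the same equality analysis. What differs is how the flux identity $\Phi(v)=\Phi(u)$ is established. The paper mollifies both $u$ and $w$, forms the solid region $D_\ep$ enclosed by the two perturbed graphs, and applies the Lorentzian divergence theorem there; this forces the orientation bookkeeping and the rescaling $w_\ep=(1-C\ep)(\widetilde{w}_\ep+u)$ needed to keep the two boundaries matched. Your homotopy $h_s=u+s(v-u)$ replaces all of that with a single integration by parts on the base $\Om$ against the test function $v-u$, which is cleaner and stays entirely $n$-dimensional. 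One caveat about your regularization step: if you mollify $v$ and work on $\Om_\ep$, then $v_\ep$ no longer agrees with $u$ on $\p\Om_\ep$, so the boundary term in the integration by parts does not vanish and the identity on $\Om_\ep$ acquires an error term that you must estimate (this mismatch is exactly what the paper's rescaling is designed to kill). It is simpler to leave $v$ alone and justify the integration by parts directly: $\psi=v-u$ is Lipschitz, continuous up to $\overline{\Om}$ and zero on $\p\Om$, so the truncations $\mathrm{sgn}(\psi)\max(|\psi|-\de,0)$ are Lipschitz with compact support in $\Om$, the integration by parts holds for them, and letting $\de\rightarrow0$ gives the identity for $\psi$ itself. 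With that repair the proof is complete.
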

\begin{proof}
There is a domain $\widetilde{\Om}$ with $\Om\subset\subset \widetilde{\Om}$ such that $u$ can be extended to $\widetilde{\Om}$ smoothly, and we may assume $|Du|<1$ in $\overline{\widetilde{\Om}}$. $\Si$ can be written as $\{(x,w(x))\big|\ x\in\Om\}$ for some $w\in\mathcal{C}(u,\Om)$. Since $w\big|_{\p\Om}=u\big|_{\p\Om}$, then we extend $w$ to $\widetilde{\Om}$ with $w\big|_{\widetilde{\Om}\setminus\Om}=u\big|_{\widetilde{\Om}\setminus\Om}$. Clearly, there is a $\ep_0>0$ such that $\Om^{\ep_0}\subset \widetilde{\Om}$. Since $u$ is smooth in $\Om^{\ep_0}$, then there exists a constant $C$ depending only on $n$ and $\sup_{\Om^{\ep_0}}|D^2u|$, such that for any $0<\ep\le\f{\ep_0}2$ and $x\in\Om^\ep$ one has
\begin{equation}\aligned\label{D2u}
|D(u*\r_\ep)(x)-Du(x)|=\left(\sum_i\bigg|\int_{\R^n}\r(y)\big(\p_{x_i}u(x-\ep y)-\p_{x_i}u(x)\big)dy\bigg|^2\right)^{\f12}\le C\ep.
\endaligned
\end{equation}
Set $\widetilde{w}_\ep=(w-u)*\r_\ep$ and $w_\ep=(1-C\ep)(\widetilde{w}_\ep+u)$. Then $w_\ep$ is a smooth function in $\Om^\f{\ep_0}2$ and $w_\ep(x)=(1-C\ep)u(x)$ for any $x\in\p\Om^\ep$ and $0<\ep\le\f{\ep_0}2$. Moreover $w_\ep\rightarrow w$ uniformly and $Dw_\ep\rightarrow Dw$ a.e. in $\overline{\Om}$.  By \eqref{D2u}, in $\Om^\ep$ one obtains
\begin{equation}\aligned
|Dw_\ep|=&(1-C\ep)|D(w*\r_\ep-u*\r_\ep+u)|\\
\le&(1-C\ep)|D(w*\r_\ep)|+(1-C\ep)|D(u*\r_\ep-u)|\\
\le&1-C\ep+C\ep(1-C\ep)=1-C^2\ep^2.
\endaligned
\end{equation}

Let $M_\ep=\{(x,(1-C\ep)u(x))\big|\ x\in\Om^\ep\}$ and $\Si_\ep=\{(x,w_\ep(x))\big|\ x\in\Om^\ep\}$, then they are both smooth strictly spacelike hypersurfaces with $\p M_\ep=\p\Si_\ep$.
Let $D_\ep$ be the domain enclosed by $M_\ep$ and $\Si_\ep$. Let $v=\sqrt{1-|Du|^2}$ and $Y$ be a vector field in $M_\ep$ defined by
$$Y=\sum_{i=1}^n\f{u_i}{v}e^{-x_{n+1}}E_i+\f{e^{-x_{n+1}}}{v}E_{n+1}.$$
Viewing $u_i$ and $v$ as functions of $x_1,\cdots,x_n$ and translating $Y$ to $\Si_\ep$ along the $x_{n+1}$ axis. Then we obtain a vector field in $D_\ep$, denoted by $Y$, too. Let $\overline{\na}$ and $\overline{\div}$ be Levi-Civita connection and divergence on $\R^{n+1}_1$ with the Lorentz metric $\bar{g}$, respectively.
From \cite{ED}, one has
\begin{equation}\aligned
\overline{\div}(Y)=&\sum_i\lan\overline{\na}_{E_i}Y,E_i\ran-\lan\overline{\na}_{E_{n+1}}Y,E_{n+1}\ran\\
=&\sum_i\lan E_i,[E_i,Y]\ran-\lan E_{n+1},[E_{n+1},Y]\ran\\
=&\sum_i\p_{x_i}\left(\f{u_i}{v}e^{-x_{n+1}}\right)+\p_{x_{n+1}}\left(\f{e^{-x_{n+1}}}{v}\right)\\
=&\sum_i\p_{x_i}\left(\f{u_i}{v}\right)e^{-x_{n+1}}-\f1{v}e^{-x_{n+1}}.
\endaligned
\end{equation}
Let $\nu_\ep,\nu_{\Si_\ep}$ be the timelike future-pointing unit normal vectors of $M_\ep,\Si_\ep$  respectively, then by Gauss formula (see \cite{ED} for example), up to a minus sign we have
\begin{equation}\aligned\label{Gaussf}
\int_{D_\ep}\overline{\div}(Y)=\int_{M_\ep}\lan Y,\nu_\ep\ran dV_{M_\ep}-\int_{\Si_\ep}\lan Y,\nu_{\Si_\ep}\ran dV_{\Si_\ep}.
\endaligned
\end{equation}
In fact, let the orientation of $\p D_\ep$ direct timelike future-pointing in $M_\ep\setminus\Si_\ep$ and direct timelike past-pointing in $\Si_\ep\setminus M_\ep$. Let $(\g_1,\cdots,\g_{n+1})$ with $\g_{n+1}^2-\sum_i\g_i^2=1$ be the timelike unit normal vector of $\p D_\ep\setminus(M_\ep\cap\Si_\ep)$ with respect to the orientation of $\p D_\ep$. Set $(\g_1,\cdots,\g_{n+1})$ be an arbitrary constant vector in $M_\ep\cap\Si_\ep$, then we have
\begin{equation}\aligned\nonumber
&\int_{D_\ep}\overline{\div}(Y)
=-\int_{D_\ep}\left(\sum_i\p_{x_i}\left(\f{u_i}{v}e^{-x_{n+1}}\right)+\p_{x_{n+1}}\left(\f{e^{-x_{n+1}}}{v}\right)\right)dx_1\wedge\cdots\wedge dx_{n+1}\\
=&\sum_i(-1)^{i}\int_{\p D_\ep}\f{u_i}{v}e^{-x_{n+1}}dx_1\wedge\cdots \widehat{dx_i}\cdots\wedge dx_{n+1}+(-1)^{n+1}\int_{\p D_\ep}\f{e^{-x_{n+1}}}{v}dx_1\wedge\cdots\wedge dx_{n}\\
=&\sum_i\int_{\p D_\ep}\f{u_i}{v}e^{-x_{n+1}}\g_idV_{\p D_\ep}-\int_{\p D_\ep}\f{e^{-x_{n+1}}}{v}\g_{n+1}dV_{\p D_\ep}\\
=&\int_{M_\ep}\lan Y,\nu_\ep\ran dV_{M_\ep}+\int_{\Si_\ep}\lan Y,-\nu_{\Si_\ep}\ran dV_{\Si_\ep}.
\endaligned
\end{equation}
If $\xi$ and $\e$ are timelike future-pointing vectors in $\R^{n+1}_1$, then reversed Cauchy-Schwarz inequality implies (see\cite{O} for example) $$-\lan\xi,\e\ran\ge\sqrt{|\lan\xi,\xi\ran|}\sqrt{|\lan\e,\e\ran|}.$$
Substituting it into \eqref{Gaussf} gives
\begin{equation}\aligned\label{DIVY}
\int_{D_\ep}\overline{\div}(Y)-\int_{M_\ep}\lan Y,\nu_\ep\ran dV_{M_\ep}=&-\int_{\Si_\ep}\lan Y,\nu_{\Si_\ep}\ran dV_{\Si_\ep}\\
\ge&\int_{\Si_\ep}\sqrt{|\lan Y,Y\ran|} dV_{\Si_\ep}=\int_{\Si_\ep} e^{-x_{n+1}}dV_{\Si_\ep}.
\endaligned
\end{equation}
The inequality \eqref{DIVY} arrives at equality if and only if $Y$ parallels $\nu_{\Si_\ep}$. By \eqref{tsl}, $\overline{\div}(Y)=0$ in $\Om$. Since $u$ is strictly spacelike, then $\int_{D_\ep}\overline{\div}(Y)\rightarrow0$ as $\ep\rightarrow0$. Combining  $\lan Y,\nu_\ep\ran\rightarrow-e^{-x_{n+1}}$ and \eqref{DIVY}, we obtain the desired result.
\end{proof}

For any $C^2$ function $w$ with $|Dw|<1$, we define a differential operator by
$$\mathcal{L}w=\sum_{i,j}\left(\de_{ij}+\f{w_iw_j}{1-|Dw|^2}\right)w_{ij}.$$
\begin{lemma}\label{max}
Let $u,\overline{u},\underline{u}$ be three $C^2$ strictly spacelike functions satisfying $\mathcal{L}u=1,\mathcal{L}\overline{u}\le1,\mathcal{L}\underline{u}\ge1$ in $\Om$, then
\begin{equation}\aligned
&u(x)-\overline{u}(x)\le\sup_{y\in\p\Om}\big(u(y)-\overline{u}(y)\big)\qquad \mathrm{for}\ x\in\Om,\\
&u(x)-\underline{u}(x)\ge\inf_{y\in\p\Om}\big(u(y)-\underline{u}(y)\big)\qquad \mathrm{for}\ x\in\Om.
\endaligned
\end{equation}
\end{lemma}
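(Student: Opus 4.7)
The plan is to reduce Lemma \ref{max} to the classical weak maximum principle for linear second order elliptic operators by linearising $\mathcal{L}$ along the segment joining $u$ with $\overline u$ (respectively with $\underline u$), so that the difference of the two quasilinear quantities becomes a linear elliptic operator applied to $u-\overline u$.

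First I would set $w=u-\overline u$ and $u_t=tu+(1-t)\overline u$ for $t\in[0,1]$. Since $|Du|<1$ and $|D\overline u|<1$ pointwise, convexity of the Euclidean unit ball gives $|Du_t|\le t|Du|+(1-t)|D\overline u|<1$, so every $u_t$ is strictly spacelike; on any compact $K\subset\Omega$ this bound is uniform in $t$ by continuity, hence $\mathcal{L}$ evaluated at $Du_t$ is uniformly elliptic on $K$, uniformly in $t$.

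Next, writing $a^{ij}(p)=\delta_{ij}+\frac{p_ip_j}{1-|p|^2}$ so that $\mathcal{L}v=a^{ij}(Dv)v_{ij}$, the fundamental theorem of calculus gives
\begin{equation*}
\mathcal{L}u-\mathcal{L}\overline u=\int_0^1\frac{d}{dt}\bigl(a^{ij}(Du_t)(u_t)_{ij}\bigr)\,dt=A^{ij}(x)\,w_{ij}+B^{k}(x)\,w_k=:Lw,
\end{equation*}
where $A^{ij}(x)=\int_0^1 a^{ij}(Du_t)\,dt$ and $B^{k}(x)=\int_0^1\frac{\partial a^{ij}}{\partial p_k}(Du_t)(u_t)_{ij}\,dt$. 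By the previous paragraph $A^{ij}$ is positive definite with ellipticity constants controlled on compacta, and $B^k$ is bounded there; thus $L$ is a linear second order elliptic operator on $\Omega$ with no zeroth order term.

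Since $\mathcal{L}u=1\ge\mathcal{L}\overline u$, we obtain $Lw\ge 0$ in $\Omega$, and the classical weak maximum principle for such $L$ yields $\sup_{\Omega}w\le\sup_{\partial\Omega}w$, which is the first inequality. The second inequality follows symmetrically with $\underline u$ replacing $\overline u$: the same linearisation along $tu+(1-t)\underline u$ produces a linear elliptic $L$ with $L(u-\underline u)\le 0$, and the minimum principle closes the argument. The only genuine point of care is keeping the linearised coefficients uniformly controlled, which boils down to the observation that strict spacelikeness is preserved by convex combinations; once that is noted the lemma is a routine reduction to a standard fact about linear elliptic operators, so I do not anticipate a serious obstacle.
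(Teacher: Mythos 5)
Your proposal is correct and follows essentially the same route as the paper: both reduce the comparison to the classical weak maximum principle by writing $\mathcal{L}u-\mathcal{L}\overline u$ as a linear elliptic operator $A^{ij}w_{ij}+B^kw_k$ acting on $w=u-\overline u$ with no zeroth-order term. The only cosmetic difference is that you obtain the coefficients via the integral (mean-value) form of the linearisation along $tu+(1-t)\overline u$, whereas the paper freezes the leading coefficients at $Du$ and extracts the first-order terms by an explicit algebraic identity for the difference of the coefficient matrices.
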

\begin{proof}
Let $w=u-\overline{u}$, then
\begin{equation}\aligned
0\le&\mathcal{L}u-\mathcal{L}\overline{u}=\left(\de_{ij}+\f{u_iu_j}{1-|Du|^2}\right)w_{ij}+
\left(\f{u_iu_j}{1-|Du|^2}-\f{\overline{u}_i\overline{u}_j}{1-|D\overline{u}|^2}\right)\overline{u}_{ij}\\
=&\left(\de_{ij}+\f{u_iu_j}{1-|Du|^2}\right)w_{ij}+
\f{(|Du|^2-|D\overline{u}|^2)u_iu_j+(1-|Du|^2)(u_iu_j-\overline{u}_i\overline{u}_j)}{(1-|Du|^2)(1-|D\overline{u}|^2)}\overline{u}_{ij}\\
=&\left(\de_{ij}+\f{u_iu_j}{1-|Du|^2}\right)w_{ij}+
\f{u_iu_j(u_k+\overline{u}_k)w_k+(1-|Du|^2)(w_iu_j-\overline{u}_iw_j)}{(1-|Du|^2)(1-|D\overline{u}|^2)}\overline{u}_{ij}.
\endaligned
\end{equation}
By the maximum principle of elliptic equations, we have $w(x)\le\sup_{y\in\p\Om}w(y)$ for each $x\in\Om$. Clearly, one could prove the second inequality in Lemma \ref{max} by the same method.
\end{proof}

Let $W$ be a continuous function in $\Om$. We define a functional $F_{W,\Om}$ on a function $w\in\mathcal{C}(\varphi,\Om)$ by
$$F_{W,\Om}(w)=\int_\Om e^{-w}\left(\sqrt{1-|Dw|^2}+W\right)dx.$$
Denote $F_{0,\Om}(\cdot)$ by $F_\Om(\cdot)$ for simplicity.
The Dirichlet problem
\begin{equation}\label{evo}\left\{\begin{split}
&\div\left(\f{Dw}{\sqrt{1-|Dw|^2}}\right)-\f1{\sqrt{1-|Dw|^2}}=W\qquad \mathrm{in}\ \Om,\\
&w(x)=\varphi(x)\qquad \mathrm{for}\quad x\in\p\Om,\\
\end{split}\right.
\end{equation}
raises from the Euler-Lagrange equation of the variational problem
$\sup_{z\in \mathcal{C}(\varphi,\Om)}F_{W,\Om}(z)$. In fact, for any $\e\in C_c^\infty(\Om)$ a simply calculation gives
\begin{equation}\aligned\label{FWwe}
\f{d}{dt}\bigg|_{t=0}F_{W,\Om}(w+t\e)=&\int_\Om\left(-\e e^{-w}\left(\sqrt{1-|Dw|^2}+W\right)-e^{-w}\f{Dw\cdot D\e}{\sqrt{1-|Dw|^2}}\right)\\
=&\int_\Om\e e^{-w}\left(\div\left(\f{Dw}{\sqrt{1-|Dw|^2}}\right)-\f1{\sqrt{1-|Dw|^2}}-W\right).
\endaligned
\end{equation}

\begin{lemma}\label{comp}
Let $u$ be a strictly spacelike function to \eqref{ts} in $\Om$ with $u\big|_{\p\Om}=\varphi$. Let $\varphi_1,\varphi_2$ be weakly spacelike functions in $\overline{\Om}$ and $W_1$ be a nonpositive continuous function and $W_2$ be a nonnegative continuous function in $\Om$. If $w_i\in \mathcal{C}(\varphi_i,\Om)$ and $F_{W_i,\Om}(w_i)=\sup_{w\in \mathcal{C}(\varphi_i,\Om)}F_{W_i,\Om}(w)$ for $i=1,2$, then
\begin{equation}\aligned\label{comp2}
&u(x)\le w_1(x)+\sup_{\p\Om}\big(\varphi-\varphi_1\big)\qquad \mathrm{for}\ x\in\Om\\
&u(x)\ge w_2(x)+\inf_{\p\Om}\big(\varphi-\varphi_2\big)\qquad \mathrm{for}\ x\in\Om.
\endaligned
\end{equation}
\end{lemma}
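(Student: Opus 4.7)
The plan is to prove both inequalities by contradiction, combining the maximality of $u$ for $F_{0,\Om}$ (given by Theorem \ref{uni}) with the hypothesized maximality of $w_i$ for $F_{W_i,\Om}$. The two claims are symmetric so I focus on the first, setting $c=\sup_{\p\Om}(\varphi-\varphi_1)$. The essential boundary relations are $u-c\le w_1$ and $w_1+c\ge u$ on $\p\Om$, both immediate from the definition of $c$. Suppose for contradiction that $B=\{x\in\Om:\,u(x)>w_1(x)+c\}$ is nonempty; by continuity it is open with positive measure. I then form the truncated competitors
\[\widetilde u=\min(u,w_1+c),\qquad \widetilde w_1=\max(w_1,u-c),\]
both of which are weakly spacelike (min/max preserves the $1$-Lipschitz property) and, by the boundary relations, satisfy $\widetilde u=\varphi$ and $\widetilde w_1=\varphi_1$ on $\p\Om$; in particular $\widetilde w_1\in\mathcal{C}(\varphi_1,\Om)$ and $\widetilde u$ is an admissible competitor in Theorem \ref{uni}.

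Next I apply the two maximality statements, obtaining
\[F_{0,\Om}(\widetilde u)\le F_{0,\Om}(u),\qquad F_{W_1,\Om}(\widetilde w_1)\le F_{W_1,\Om}(w_1).\]
Off $B$ both integrands agree, so the differences reduce to integrals over $B$. After substituting $\widetilde u=w_1+c$ and $\widetilde w_1=u-c$ on $B$ and factoring out $e^{\mp c}$, the two inequalities become
\[e^{-c}\!\int_B e^{-w_1}\sqrt{1-|Dw_1|^2}\,dx \le \int_B e^{-u}\sqrt{1-|Du|^2}\,dx,\]
\[e^{c}\!\int_B e^{-u}\sqrt{1-|Du|^2}\,dx \le \int_B e^{-w_1}\sqrt{1-|Dw_1|^2}\,dx + \int_B (e^{-w_1}-e^{c}e^{-u})W_1\,dx.\]
On $B$ the strict inequality $u>w_1+c$ yields $e^{-w_1}-e^{c}e^{-u}>0$, and combined with $W_1\le 0$ this makes the last integral non-positive. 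Dropping it gives the reverse of the first displayed inequality, so both are equalities. In particular $F_{0,\Om}(\widetilde u)=F_{0,\Om}(u)$, and the equality clause of Theorem \ref{uni} then forces $\widetilde u\equiv u$, contradicting $|B|>0$.

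The second inequality is handled symmetrically: set $c_2=\inf_{\p\Om}(\varphi-\varphi_2)$, suppose $B'=\{u<w_2+c_2\}\neq\emptyset$, and use the swapped competitors $\widetilde u=\max(u,w_2+c_2)$, $\widetilde w_2=\min(w_2,u-c_2)$. On $B'$ one now has $e^{-w_2}<e^{c_2}e^{-u}$, so the hypothesis $W_2\ge 0$ is precisely what gives the favorable sign of the $W_2$-integral, and the proof closes in the same way. The main obstacle is the sign matching between $B$, the exponential difference $e^{-w_i}-e^{\pm c_i}e^{-u}$, and the sign of $W_i$; once these are lined up, admissibility of the min/max competitors and invocation of the equality case of Theorem \ref{uni} are routine.
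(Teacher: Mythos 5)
Your argument is in substance the paper's own: you play the equality case of Theorem \ref{uni} against the maximality of $w_1$ on the set where $u$ exceeds $w_1+c$, and the sign condition $W_1\le 0$ together with $e^{-w_1}-e^{c}e^{-u}>0$ on that set is exactly what kills the $W_1$-term; packaging this through the truncations $\min(u,w_1+c)$ and $\max(w_1,u-c)$ rather than by restricting both functionals to the superlevel set is only a cosmetic difference. The one point where you outrun the hypotheses is the application of Theorem \ref{uni} on all of $\Om$: that theorem is stated (and proved, via extension of $u$ past $\p\Om$) for $u$ smooth and strictly spacelike on $\overline{\Om}$, whereas the lemma only assumes strict spacelikeness in the open set $\Om$, and your set $B$ need not be compactly contained in $\Om$ because $u=w_1+c$ can occur on the part of $\p\Om$ where the supremum defining $c$ is attained. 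The paper sidesteps this by comparing with $w_1+c+\ep$ instead of $w_1+c$, which forces the excess set $\Om^+$ to satisfy $\overline{\Om^+}\subset\Om$, so Theorem \ref{uni} is invoked only on $\Om^+$, where $u$ is smooth and strictly spacelike up to the closure; letting $\ep\rightarrow 0$ at the end recovers \eqref{comp2}. Inserting the same $\ep$-shift into your truncations closes the gap and makes your proof complete.
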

\begin{proof}
Let $C=\sup_{\p\Om}\big(\varphi-\varphi_1\big)$, $\ep$ be a small positive constant, $w^*=w_1+C+\ep$ and $\Om^+=\{x\in\Om|\ u(x)>w^*(x)\}$. Assume $\Om^+\neq\emptyset$.

Clearly, $\overline{\Om^+}\subset\Om$ and $u(x)=w^*(x)$ on $\p\Om^+$. By $F_{W_1,\Om^+}(w^*)\ge F_{W_1,\Om^+}(u)$, we conclude
\begin{equation}\aligned
\int_{\Om^+}e^{-w^*}\sqrt{1-|Dw^*|^2}dx=&F_{W_1,\Om^+}(w^*)-\int_{\Om^+}e^{-w^*}W_1dx\\
\ge& F_{W_1,\Om^+}(u)-\int_{\Om^+}e^{-w^*}W_1dx\\
=&\int_{\Om^+}e^{-u}\sqrt{1-|Du|^2}dx+\int_{\Om^+}\left(e^{-u}-e^{-w^*}\right)W_1dx\\
\ge&\int_{\Om^+}e^{-u}\sqrt{1-|Du|^2}dx.
\endaligned
\end{equation}
Theorem \ref{uni} implies $F_{\Om^+}(w^*)\le F_{\Om^+}(u)$, then $F_{\Om^+}(w^*)=F_{\Om^+}(u)$ and $W_1\equiv0$. Hence $\Om^+$ is empty by Theorem \ref{uni}. Letting $\ep\rightarrow0$ yields the first inequality in \eqref{comp2}. The second inequality in \eqref{comp2} could be showed similarly.
\end{proof}
\begin{remark}
Lemma \ref{comp} can be seen as a weak version of Lemma \ref{max}, if we set $W_1=\f{\mathcal{L}\overline{u}-1}{\sqrt{1-|D\overline{u}|^2}}$ and $W_2=\f{\mathcal{L}\underline{u}-1}{\sqrt{1-|D\underline{u}|^2}}$.
\end{remark}

\section{Barrier functions and applications}

For $K\in\R$, let
\begin{equation}\aligned\label{wwt}
w_K(t)\triangleq\int_0^t\f K{\sqrt{s^{2n-2}+K^2}}ds\ \ \ \mathrm{and}\ \ \ \widetilde{w}_K(t)\triangleq\int_0^t\f K{\sqrt{s^{2n}+K^2}}ds\qquad \mathrm{for}\ t\ge0.
\endaligned
\end{equation}
Then
\begin{equation}\aligned\label{Lw}
\mathcal{L}w_K=\f{w_K''}{1-(w_K')^2}+\f{n-1}tw_K'=t^{1-n}\sqrt{1-(w_K')^2}\left(\f{t^{n-1}w_K'}{\sqrt{1-(w_K')^2}}\right)'=0,
\endaligned
\end{equation}
and
\begin{equation}\aligned\label{Ltw}
\mathcal{L}\widetilde{w}_K=\f{\widetilde{w}_K''}{1-(\widetilde{w}_K')^2}+\f{n-1}t\widetilde{w}_K'=-\f K{t\sqrt{t^{2n}+K^2}}.
\endaligned
\end{equation}
Moreover, $\lim_{K\rightarrow\pm\infty}w_K(t)=\lim_{K\rightarrow\pm\infty}\widetilde{w}_K(t)=\pm t$.

Now we give an existence theorem for an ODE arising from \eqref{ts}.
\begin{theorem}\label{test}
For any constant $C\in(-r,r)$ and $r>0$, the following ODE:
\begin{equation}\label{ODE}\left\{\begin{split}
&\f{\phi''}{1-(\phi')^2}+\f{n-1}t\phi'=1\qquad\ \mathrm{for}\ t\in (0,r),\\
&\phi(r)=C,\ \phi(0)=0\ \mathrm{and}\ |\phi'|<1,\\
\end{split}\right.
\end{equation}
has a unique smooth solution $\phi_0$ in $(0,r)$. Furthermore, if $r^2\le(n-1)C$ and $K_1>0$ is a constant with $w_{K_1}(r)=C$, then $$\f Crt\le\phi_0(t)\le w_{K_1}(t)\quad for\ \ t\in[0,r].$$
If $C<0$, $r<1$, $K_2<0$ is a constant with $\widetilde{w}_{K_2}(r)=C$ and $K_2^2\ge\f{r^{2n+2}}{1-r^2}$, then $$\widetilde{w}_{K_2}(t)\le\phi_0(t)\le \f{C}rt\quad for\ \ t\in[0,r].$$
\end{theorem}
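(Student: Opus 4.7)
The plan is to prove existence of $\phi_0$ by a shooting argument on a reduced first-order equation, uniqueness via the maximum principle of Lemma~\ref{max}, and the barrier inequalities by direct comparisons against the explicit functions in \eqref{wwt}.

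\emph{Reduction and existence.} Setting $\psi(t)=t^{n-1}\phi'(t)/\sqrt{1-(\phi')^2}$ and multiplying the ODE by $t^{n-1}\sqrt{1-(\phi')^2}$ converts it to the equivalent first-order system
\[
\psi'(t)=\sqrt{t^{2(n-1)}+\psi(t)^2}\quad\text{on }(0,r),\qquad \phi(t)=\int_0^t\frac{\psi(s)}{\sqrt{s^{2(n-1)}+\psi(s)^2}}\,ds.
\]
The condition $|\phi'|<1$ corresponds to $\psi$ being finite, and since $n\ge 2$ the factor $t^{n-1}$ absorbs the possibility that $\phi'\to\pm 1$ as $t\to 0^+$, making $b\triangleq\psi(0^+)\in\mathbb{R}$ an unconstrained shooting parameter. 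For each fixed $b\in\mathbb{R}$ there is a unique solution $\psi_b$ on $[0,r]$: standard Cauchy--Lipschitz for $b\ne 0$, and for $b=0$ the inequality $(\psi_1-\psi_2)'\le(\psi_1-\psi_2)$ for nonnegative solutions plus Gr\"onwall enforces uniqueness. I would then show $h(b):=\phi_b(r)$ is continuous on $\mathbb{R}$ (the estimate $|(\psi_b-\psi_{b'})'|\le|\psi_b-\psi_{b'}|$ plus Gr\"onwall gives $|\psi_b-\psi_{b'}|\le|b-b'|e^r$ on $[0,r]$) and that $h(b)\to\pm r$ as $b\to\pm\infty$. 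For $b\to+\infty$, $\psi_b\ge b$ forces $v_b:=\psi_b/t^{n-1}\to+\infty$ pointwise on $(0,r]$, so $\phi_b'=v_b/\sqrt{1+v_b^2}\to 1$, and dominated convergence (dominant $1$) gives $h(b)\to r$; for $b\to-\infty$, the bound $\psi_b'\le t^{n-1}+|\psi_b|$ plus Gr\"onwall yields $-\psi_b(t)\ge e^{-r}(|b|-\int_0^r e^s s^{n-1}\,ds)$ on $[0,r]$, so $\psi_b$ stays negative, $|v_b|\to\infty$ pointwise, and $h(b)\to-r$. The intermediate value theorem produces $b$ with $h(b)=C$ for any $C\in(-r,r)$. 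Uniqueness then follows by applying Lemma~\ref{max} to two candidate radial solutions on the shrinking annulus $\{\epsilon<|x|<r\}\subset\mathbb{R}^n$ and letting $\epsilon\to 0^+$, since continuity at $t=0,r$ forces the boundary difference to vanish.

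\emph{Barrier bounds.} All four inequalities come from Lemma~\ref{max}-style comparison on the annulus. When $r^2\le(n-1)C$, $\phi_-(t)=Ct/r$ satisfies $|\phi_-'|<1$ and $\mathcal{L}\phi_-=(n-1)C/(rt)\ge(n-1)C/r^2\ge 1$, so it is a subsolution; by \eqref{Lw}, $w_{K_1}$ is a supersolution ($\mathcal{L}w_{K_1}=0$); both agree with $\phi_0$ at $t=0,r$, giving $Ct/r\le\phi_0\le w_{K_1}$. When $C<0$, $r<1$, and $K_2^2\ge r^{2n+2}/(1-r^2)$, \eqref{Ltw} gives $\mathcal{L}\widetilde{w}_{K_2}=|K_2|/(t\sqrt{t^{2n}+K_2^2})$; the inequality $\mathcal{L}\widetilde{w}_{K_2}\ge 1$ is equivalent to $K_2^2(1-t^2)\ge t^{2n+2}$, whose right-hand side is monotone increasing in $t$, so the hypothesis at $t=r$ propagates to all $t\in[0,r]$ and makes $\widetilde{w}_{K_2}$ a subsolution; $\mathcal{L}(Ct/r)=(n-1)C/(rt)\le 0\le 1$ for $C<0$ makes $Ct/r$ a supersolution; hence $\widetilde{w}_{K_2}\le\phi_0\le Ct/r$.

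The main obstacle is the shooting step, specifically the two limits $h(b)\to\pm r$ as $b\to\pm\infty$: one must control the boundary layer at $t=0$, where $v_b$ blows up, well enough that the pointwise convergence $\phi_b'\to\pm 1$ on $(0,r]$ can be passed to the integral via dominated convergence.
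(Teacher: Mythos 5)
Your proof is correct, and its uniqueness and barrier halves coincide with the paper's: both hinge on Lemma \ref{max} with the comparison functions $Ct/r$, $w_{K_1}$, $\widetilde{w}_{K_2}$, and your algebra verifying $\mathcal{L}(Ct/r)\ge1$ when $r^2\le (n-1)C$, $\mathcal{L}w_{K_1}=0$, $\mathcal{L}(Ct/r)\le0\le1$ when $C<0$, and $\mathcal{L}\widetilde{w}_{K_2}\ge1$ iff $K_2^2(1-t^2)\ge t^{2n+2}$ matches \eqref{Lw}--\eqref{Ltw} and the paper's computation. The existence half is genuinely different. The paper solves the approximate two-point problems \eqref{ODE*} on $[\ep,r)$ with $\phi(\ep)=0$, extracts a uniform limit $\phi_0\in C^{0,1}$, and recovers the equation and smoothness by integrating once (the identity \eqref{phide}) while ruling out the degenerate limit $\phi'\to\pm1$, which would force $\phi_0$ to be a null line and contradict $|C|<r$. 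You instead substitute $\psi=t^{n-1}\phi'/\sqrt{1-(\phi')^2}$ -- precisely the quantity behind \eqref{Lw} -- to get the first-order equation $\psi'=\sqrt{t^{2(n-1)}+\psi^2}$, whose right side is globally $1$-Lipschitz in $\psi$, and shoot in $b=\psi(0)$; Picard--Lindel\"of, Gr\"onwall, your two limits $h(b)\to\pm r$, and the intermediate value theorem then give existence with no compactness argument and without having to justify the solvability of \eqref{ODE*}, which the paper merely calls ``clear''. Two small remarks. First, your heuristic that ``the factor $t^{n-1}$ absorbs the possibility that $\phi'\to\pm1$'' is not the actual reason $\psi(0^+)$ is finite for an arbitrary solution (monotonicity of $\psi$ gives the upper bound near $0$, and $\psi'\ge-\psi$ on $\{\psi<0\}$ plus Gr\"onwall gives the lower one); but since you prove uniqueness by the maximum principle rather than by injectivity of the shooting map, nothing rests on this. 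Second, like the paper, you apply the comparison on all of $B_r$ even though $w_{K_1}$, $\widetilde{w}_{K_2}$, $C|x|/r$ and $\phi_0(|x|)$ generically fail to be $C^2$ strictly spacelike at the origin ($|w_K'(0)|=1$); the annulus-plus-limit device you already use for uniqueness is the standard fix and covers the barrier bounds as well.
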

\begin{proof}
We consider a family of approximation equations:
\begin{equation}\label{ODE*}\left\{\begin{split}
&\f{\phi''}{1-(\phi')^2}+\f{n-1}t\phi'=1\qquad \ \mathrm{for}\ t\in [\ep,r),\\
&\phi(r)=C,\ \phi(\ep)=0\ \mathrm{and}\ |\phi'|<1.\\
\end{split}\right.
\end{equation}
Clearly, \eqref{ODE*} has a smooth solution $\phi_\ep$ on $[\ep,r)$ for $0<\ep<r-|C|$. For any fixed $\de\in(0,r)$ there is a sequence $\{\ep_i\}$ such that $\phi_{\ep_i}\rightarrow\phi_0$ uniformly in $[\de,r)$ as $\ep_i\rightarrow0$.
Then $\phi_0\in C^{0,1}((0,r))$, $\phi_0(r)=C$, $\phi_0(0)=0$ and $|\phi'_0|\le1$ a.e..

If there is a subsequence $\ep_{i_l}\rightarrow0$ (we denoted it by $\ep_i$) such that $\phi'_{\ep_i}(t_0)\rightarrow\a$ for some fixed $t_0\in(0,r)$, then integrating \eqref{ODE*} for $\phi'_\ep$ with sufficiently small $0<\ep<t_0$ gives
\begin{equation}\aligned\label{phide}
\f12\log\f{1+\phi'_\ep(t)}{1-\phi'_\ep(t)}\bigg|_{t_0}^t+(n-1)\int_{t_0}^t\f{\phi'_\ep(s)}sds=t-t_0\quad \mathrm{for}\ t\in[\ep,r).
\endaligned
\end{equation}
If $\a=\pm1$, then letting $\ep_i\rightarrow0^+$ gets $\phi'_{\ep_i}(t)\rightarrow\pm1$ for any $t\in(0,r)$. Set $$f_\ep(t)=\f{1+\phi'_\ep(t_0)}{1-\phi'_\ep(t_0)}e^{2(t-t_0)-2(n-1)\int^t_{t_0}\f{\phi'_\ep(s)}sds},$$ then \eqref{phide} implies
$\f{1+\phi'_\ep(t)}{1-\phi'_\ep(t)}=f_\ep(t)$ and
\begin{equation}\aligned\label{intphi}
\phi_\ep(t)-\phi_\ep(t_0)=\int^t_{t_0}\f{f_\ep(s)-1}{f_\ep(s)+1}ds.
\endaligned
\end{equation}
Thus one has $\phi_0(t)-\phi_0(t_0)=\pm(t-t_0)$ which contradicts with $\phi_0(r)-\phi_0(0)=C$. Therefore, $\phi'_{\ep_i}(t_0)\rightarrow\a\in(-1,1)$, then by \eqref{intphi} and $\phi_0\in C^{0,1}((0,r))$, we obtain
\begin{equation}\aligned
\phi_0(t)-\phi_0(t_0)=
\int^t_{t_0}\f{\a-1+(1+\a)e^{2(s-t_0)-2(n-1)\int^s_{t_0}\f{\phi'_0(p)}pdp}}{1-\a+(1+\a)e^{2(s-t_0)-2(n-1)\int^s_{t_0}\f{\phi'_0(p)}pdp}}ds\qquad \mathrm{for}\ t>0.
\endaligned
\end{equation}
Hence $\phi'_0(t)$ exists everywhere in $(0,r)$, and one gets
\begin{equation}\aligned\label{phida}
\f12\log\f{1+\phi'_0(t)}{1-\phi'_0(t)}-\f12\log\f{1+\a}{1-\a}+(n-1)\int_{t_0}^t\f{\phi'_0(s)}sds=t-t_0.
\endaligned
\end{equation}
Then $\phi'_0(t)$ is continuous and $\phi''_0(t)$ exists everywhere in $(0,r)$. Now \eqref{phida} implies
$$\phi''_0=\left(1-(\phi'_0)^2\right)\left(1-\f{n-1}t\phi'_0\right).$$
The above equation shows that $\phi_0$ is our desired smooth solution to \eqref{ODE}. By Lemma \ref{max}, we know the uniqueness of the smooth solution to \eqref{ODE}.

For $C\ge\f{r^2}{n-1}$, one has
$$\mathcal{L}\big(\f {Ct}r\big)=\f{n-1}t\f Cr\ge\f{(n-1)C}{r^2}\ge1=\mathcal{L}\phi_0.$$
Let $K_1>0$ be a constant with $w_{K_1}(r)=C$. Combining \eqref{Lw} and Lemma \ref{max} gives $\f{Ct}r\le\phi_0(t)\le w_{K_1}(t)$ on $[0,r]$.

For $C<0$ and $r<1$, one selects a negative constant $K_2$ satisfying $\widetilde{w}_{K_2}(r)=C$. If $K_2^2\ge\f{r^{2n+2}}{1-r^2}$, then \eqref{Ltw} yields
\begin{equation}\aligned
\mathcal{L}\widetilde{w}_{K_2}\ge-\f{K_2}{r\sqrt{r^{2n}+K_2^2}}\ge1=\mathcal{L}\phi_0.
\endaligned
\end{equation}
By Lemma \ref{max}, we have $\widetilde{w}_{K_2}(t)\le\phi_0(t)\le \f{Ct}r$ on $[0,r]$.
\end{proof}

Let $\Om$ be a bounded domain in $\R^n$ and $\varphi$ be a weakly spacelike function on $\overline{\Om}$. Let $F_{\Om}(\cdot)$ be defined previously. Clearly, $\sup_{w\in \mathcal{C}(\varphi,\Om)}F_\Om(w)$ is bounded, which implies there exists a sequence $\{u_k\}\subset\mathcal{C}(\varphi,\Om)$ such that $\lim_{k\rightarrow\infty}F_\Om(u_k)=\sup_{w\in \mathcal{C}(\varphi,\Om)}F_\Om(w)$.
The equicontinuity of $\mathcal{C}(\varphi,\Om)$ then gives a uniformly convergent subsequence $u_{k_i}$ of maximizing sequence $u_k$ with $u_{k_i}\rightarrow u_0\in \mathcal{C}(\varphi,\Om)$. By the Appendix,
$F_\Om(u_0)=\sup_{w\in \mathcal{C}(\varphi,\Om)}F_\Om(w)$ is maximal.

The next result for translating solitons is similar to hypersurfaces with bounded mean curvature in Minkowski space, see Theorem 3.2 in \cite{BS}
for example. However, the mean curvature of any translating soliton is unbounded (see Proposition \ref{unbd H}).
\begin{lemma}\label{line}
If $u$ is a weakly spacelike solution to the variational functional $F$. Let $x_0,x_1\in\overline{\Om}$ with the open line segment $\overline{x_0x_1}\subset\Om$ such that
\begin{equation}\aligned\label{u0xt}
u(x_t)=u(x_0)+t|x_0-x_1|\qquad for\ t\in[0,1],
\endaligned
\end{equation}
where $x_t=x_0+t(x_1-x_0)$. Then \eqref{u0xt} holds for all $t\in\R$ such that $x_t\in\Om$ and $\overline{x_0x_t}\subset\Om$.
\end{lemma}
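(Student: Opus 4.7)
The plan is a proof by contradiction using the variational characterization of $u$ and the comparison Lemma~\ref{comp} applied to barrier functions built from Theorem~\ref{test}. By the symmetry of the hypothesis under reversing the parametrization, it is enough to extend the null-segment identity for $t>1$; the case $t<0$ is entirely analogous.

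Set $\ell(y):=u(x_0)+\langle y-x_0,e\rangle$ with $e:=(x_1-x_0)/|x_0-x_1|$, so by hypothesis $u\equiv\ell$ on $[x_0,x_1]$, and define
\[
T:=\sup\bigl\{t\ge 1\,:\,u(x_s)=\ell(x_s)\ \text{for all}\ s\in[0,t]\ \text{with}\ x_s\in\Omega,\ \overline{x_0 x_s}\subset\Omega\bigr\}.
\]
By continuity of $u$ the supremum is attained, so $u\equiv\ell$ on $[x_0,x_T]$. Suppose for contradiction that $T<T_{\max}:=\sup\{t\ge 1\,:\,x_t\in\Omega,\ \overline{x_0 x_t}\subset\Omega\}$; then $x_T\in\Omega$, and by the definition of $T$ there is a sequence $s_k\downarrow T$ with $u(x_{s_k})<\ell(x_{s_k})$, while the weakly spacelike condition from $x_T$ gives $u(x_{T+s})\le\ell(x_{T+s})$ for all small $s\ge 0$.

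Fix $r>0$ with $\overline{B_r(x_T)}\subset\Omega$, $r<1$, and $r<T|x_0-x_1|$ so that the past boundary point $x_T-re$ lies on $[x_0,x_T]$, yielding $u(x_T-re)=u(x_T)-r$ exactly; combined with the weakly spacelike lower bound $u(y)\ge u(x_T)-r$ elsewhere on $\partial B_r(x_T)$, this gives $\inf_{\partial B_r(x_T)}u=u(x_T)-r$. The restriction $u|_{B_r(x_T)}$ is itself a weak maximizer of $F_{B_r(x_T)}$ with boundary values $u|_{\partial B_r(x_T)}$, since otherwise a local improvement would contradict the global maximality of $u$. Using the radial strong solutions $\phi_0^C$ of $\mathcal{L}\phi_0=1$ from Theorem~\ref{test} (with $C\in(0,r)$ satisfying $C\ge r^2/(n-1)$, so that $\phi_0^C(t)\to t$ uniformly on compact subsets of $[0,r)$ as $C\to r^-$), I would construct a family $\{U_k\}$ of smooth strictly spacelike strong solutions of \eqref{ts} in $B_r(x_T)$ satisfying $U_k\le u$ on $\partial B_r(x_T)$ and $U_k(x_T+se)\to\ell(x_T+se)$ for small $s>0$. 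Applying Lemma~\ref{comp} with $U_k$ in the role of the strong solution, $W_1=0$, $\varphi=U_k|_{\partial B_r(x_T)}$, $\varphi_1=u|_{\partial B_r(x_T)}$, and $w_1=u|_{B_r(x_T)}$ yields $U_k\le u$ throughout $B_r(x_T)$; evaluating at $x_{T+s}$ and letting $k\to\infty$ gives $u(x_{T+s})\ge\ell(x_{T+s})$, which combined with the upper bound forces $u\equiv\ell$ on a nontrivial interval $[x_T,x_{T+\delta}]$, contradicting the definition of $T$.

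The principal obstacle is the construction of the anisotropic barrier family $\{U_k\}$. The radial building block $\phi_0^C$ is symmetric about $x_T$ and cannot by itself both approximate the cone $u(x_T)+|y-x_T|$ in the $+e$ direction (needed for the sharp lower bound at $x_T+se$) and satisfy $U_k\le u$ on the whole sphere $\partial B_r(x_T)$, since the weakly spacelike inequality only forces $u\ge u(x_T)-r$ there, not $u\ge$ cone. The required adaptation exploits the exact match $u(x_T-re)=u(x_T)-r$ on the past segment together with a re-centering (or tilt) of the radial profile $\phi_0^C$ along the direction $e$, producing a strong solution that agrees with $\ell$ along the ray $\{x_T+se\}$ in the limit while staying below $u$ on the complementary part of $\partial B_r(x_T)$. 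Assembling this barrier precisely, by combining the radial solutions of Theorem~\ref{test} with the tight equality inherited from the past null segment, is the technical heart of the argument.
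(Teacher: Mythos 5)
Your overall frame (contradiction, comparison via Lemma \ref{comp}, barriers from the radial ODE solutions) is the right toolbox, but the proof is incomplete at exactly the step you flag as ``the technical heart'': the family $\{U_k\}$ is never constructed, and it is a genuinely problematic object. You need strictly spacelike exact solutions of \eqref{ts} on $B_r(x_T)$ with $U_k\le u$ on \emph{all} of $\partial B_r(x_T)$ and $U_k(x_T+se)\to u(x_T)+s$. Since $u(x_T-re)=u(x_T)-r$ forces $U_k(x_T-re)\le u(x_T)-r$, each $U_k$ must climb by almost $r+s$ over the chord of length $r+s$ from $x_T-re$ to $x_T+se$, i.e.\ the family must degenerate to a lightlike profile along that diameter while remaining exact solutions with prescribed boundary control on the rest of the sphere. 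Theorem \ref{test} only supplies radial solutions, and a ``tilt/re-center'' of a radial profile is no longer a solution of \eqref{ts}; nothing in the paper (or in your sketch) produces such an anisotropic degenerating family. As written, the argument therefore has a genuine gap, not merely a deferred computation.

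The paper's proof (following Bartnik--Simon) avoids this entirely by reversing the roles of the known and unknown parts of the segment. After relabeling, one assumes \eqref{u0xt} holds on $[-\tfrac14,1]$ but fails at $t=-\tfrac12$, so $u(x_{-1/2})>u(x_0)-\tfrac12|x_0-x_1|$. On the sphere $\partial B$, $B=B_{|x_0-x_1|/2}(x_0)$, the reverse triangle inequality gives $u\ge C_1>C_2$ off the ray, where $C_2(x)=u(x_0)-|x-x_0|$ is the backward light cone at $(x_0,u(x_0))$; together with the assumed failure at $x_{-1/2}$ this yields the \emph{strict} inequality $u>C_2$ on all of $\partial B$. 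One then slips the radial, strictly spacelike subsolution $\widetilde w_K(|x-x_0|)+u(x_0)$ (already available from \eqref{wwt}--\eqref{Ltw} for $K$ sufficiently negative, with the trivial equality at the center $x_0$) between $C_2$ and $u$ on $\partial B$, and Lemma \ref{comp} on $B\setminus\{x_0\}$ propagates $u\ge\widetilde w_K(|\cdot-x_0|)+u(x_0)>C_2$ into $B$ --- contradicting $u(x_t)=C_2(x_t)$ on the part $t\in[-\tfrac14,0)$ of the segment where equality is \emph{already known}. The moral difference from your plan: the barrier is centered where $u$ touches the light cone, only radial solutions are needed, and the contradiction lands on the known null segment rather than attempting to force equality on the unknown extension. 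If you want to salvage your write-up, replace the construction of $\{U_k\}$ by this inversion of the argument; also note that the forward and backward cases are not literally symmetric (the equation is not invariant under $u\mapsto-u$), so the $t>1$ case uses the supersolutions $w_K$, $K>0$, rather than $\widetilde w_K$, $K<0$.
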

\begin{proof}
We prove it by following the steps of the proof for Theorem 3.2 in \cite{BS}. Suppose that \eqref{u0xt} holds for $\forall t\in[-\f14,1]$,  $u(x_{-\f12})>u(x_0)-\f12|x_0-x_1|$, $|x_0-x_1|<1$ and the Euclidean ball $B_{2|x_0-x_1|}(x_1)\subset\subset\Om$. Let $B=B_{\f12|x_0-x_1|}(x_0)$, and let $C_1$ and $C_2$ denote the backward light cones with apexes at $(x_1,u(x_1))$ and $(x_0,u(x_0))$ respectively. Then we have
$$u(x)\ge C_1(x)>C_2(x)\qquad  \mathrm{for\ any\ } x\in B\setminus\{x_t|\ t\in[-\f12,0]\}.$$
Combining $u(x_{-\f12})>u(x_0)-\f12|x_0-x_1|$ gives
$$u(x)>C_2(x)\qquad \mathrm{for\ any\ } x\in\p B.$$
Let $\widetilde{w}_K$ be defined as \eqref{wwt} with sufficiently small negative number $K$, then
$$u(x)>\widetilde{w}_K(|x-x_0|)+u(x_0)>C_2(x)\qquad \mathrm{for\ any\ } x\in\p B.$$
But $\widetilde{w}_K$ is strictly spacelike away from $x_0$, so $\widetilde{w}_K(|x_t-x_0|)+u(x_0)>u(x_t)$ for $t\in[-\f14,0)$, and this
contradicts Lemma \ref{comp} applied with $\Om=B\setminus\{x_0\}$.

A completely analogous argument holds if \eqref{u0xt} fails for $t>1$.
\end{proof}

Let $u$ be a smooth function satisfying \eqref{ts} and $g_{ij},g^{kl}$ be defined as in $\S$ 2, then $$g^{ij}u_{ijk}-g^{ip}(\p_kg_{pq})g^{qj}u_{ij}=0.$$
Here we have adopted Einstein convention of summation over repeated indices. Denote $v=\sqrt{1-|Du|^2}$ as before. Hence
\begin{equation}\aligned
g^{ij}(|Du|^2)_{ij}=&2g^{ij}(u_{ki}u_{kj}+u_{ijk}u_k)\\
=&2g^{ij}u_{ki}u_{kj}-2g^{ip}(u_pu_{kq}+u_qu_{kp})u_kg^{qj}u_{ij}\\
=&2g^{ij}u_{ki}u_{kj}-4g^{ip}u_pu_{kq}u_kg^{qj}u_{ij}\\
=&2g^{ij}u_{ki}u_{kj}-\f4{v^2}u_{kq}u_kg^{qj}u_iu_{ij}\\
=&2g^{ij}u_{ki}u_{kj}-\f1{v^2}g^{qj}(|Du|^2)_q(|Du|^2)_j,\\
\endaligned
\end{equation}
i.e.,
\begin{equation}\aligned\label{bge}
g^{ij}(|Du|^2)_{ij}+\f1{v^2}g^{ij}(|Du|^2)_i(|Du|^2)_j=2g^{ij}u_{ki}u_{kj}\ge0.\\
\endaligned
\end{equation}

\section{Convexity and Dirichlet problem}

In this section, we always suppose that $M=\{(x,u(x))|\ x\in\R^n\}$ is a smooth strictly spacelike hypersurface in $\R^{n+1}_1$ with $u$ satisfying \eqref{tsl}. Let $\na$, $\div_M$ and $\De$ be Levi-Civita connection, divergence and Laplacian operator on $M$ with the induced metric from $\R^{n+1}_1$, respectively. Inspired by the 'drift Laplacian' on self-shrinkers, which was introduced by Colding-Minicozzi \cite{CM1}, we define a second order differential operator $L$ by
$$Lf=e^{u}\div_M(e^{-u}\na f)\qquad \mathrm{for}\ f\in C^2(\R^n).$$
Let $E_1,\cdots,E_{n+1}$ be the unit natural basis of $\R^{n+1}_1$, and $\p_jf=\f{\p f}{\p x_j}$. Denote $\nu$ be the unit normal vector field of $M$: $\f1{\sqrt{1-|Du|^2}}\big(\sum_{i=1}^nu_iE_i+E_{n+1}\big)$.
Since
\begin{equation}\aligned\label{Defnaf}
\na f=&\sum_{i=1}^nf_iE_i+\left\lan\sum_{i=1}^nf_iE_i,\nu\right\ran\nu=\sum_{i=1}^nf_iE_i+\f1{\sqrt{1-|Du|^2}}\left(\sum_{i=1}^nu_if_i\right)\nu\\
=&\sum_{i=1}^n\left(f_i+\f1{1-|Du|^2}\left(\sum_{j=1}^nu_jf_j\right)u_i\right)E_i+\f1{1-|Du|^2}\left(\sum_{i=1}^nu_if_i\right)E_{n+1},
\endaligned
\end{equation}
then one has
\begin{equation}\aligned
\na u=\f1{1-|Du|^2}\sum_{i=1}^nu_iE_i+\f{|Du|^2}{1-|Du|^2}E_{n+1},
\endaligned
\end{equation}
and
\begin{equation}\aligned\label{naunaf}
\lan\na u,\na f\ran=\f1{1-|Du|^2}\sum_{i=1}^nu_if_i.
\endaligned
\end{equation}
\eqref{Defnaf} and \eqref{naunaf} imply
\begin{equation}\aligned\label{Lf}
Lf=&\De f-\lan\na u,\na f\ran=\De f-\f1{1-|Du|^2}\sum_{i=1}^nu_if_i=\De f+\lan E_{n+1},\na f\ran.
\endaligned
\end{equation}

\begin{proposition}\label{unbd H}
The manifold $M=\{(x,u(x))|\ x\in\R^n\}$ has unbounded mean curvature.
\end{proposition}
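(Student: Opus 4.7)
The plan is to argue by contradiction via an integration of the translator equation \eqref{tsl} over Euclidean balls. The starting observation is that on $M$ the mean curvature is explicit: since $\nu=\frac{1}{v}\bigl(\sum u_iE_i+E_{n+1}\bigr)$ with $v=\sqrt{1-|Du|^2}$ and $\lan E_{n+1},E_{n+1}\ran=-1$, we have $\lan\nu,E_{n+1}\ran=-1/v$, so by \eqref{HnuE}
\[
H=-\lan\nu,E_{n+1}\ran=\f{1}{\sqrt{1-|Du|^2}}.
\]
Boundedness of $H$ is therefore equivalent to $|Du|$ being bounded away from $1$ on all of $\R^n$.

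The main step is to rule this out. Assume toward a contradiction that $H\le H_0$ on $M$. Then $v\ge 1/H_0$, and since $|Du|\le 1$ trivially, we also get $|Du|/v\le H_0$. Now integrate equation \eqref{tsl} in its divergence form over the Euclidean ball $B_R\subset\R^n$: by the divergence theorem,
\[
\int_{B_R}\f{1}{v}\,dx=\int_{B_R}\div\!\left(\f{Du}{v}\right)dx=\int_{\p B_R}\f{Du\cdot(x/|x|)}{v}\,dS_x.
\]
Estimating the right-hand side by $|Du|/v\le H_0$ yields $\int_{\p B_R}|Du|/v\,dS\le H_0\cdot \mathrm{Area}(\p B_R)$. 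For the left-hand side, note that $v\le 1$ forces $1/v\ge 1$, so $\int_{B_R}1/v\,dx\ge \mathrm{Vol}(B_R)$.

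Combining these two estimates gives
\[
\mathrm{Vol}(B_R)\le H_0\cdot\mathrm{Area}(\p B_R),
\]
i.e.\ $R/n\le H_0$ up to dimensional constants. Letting $R\to\infty$ produces the desired contradiction, so $H$ must be unbounded on $M$. The argument is essentially a one-line integration once the equation is written in divergence form; there is no real obstacle, but the conceptual point is that the linear growth of $\mathrm{Vol}(B_R)/\mathrm{Area}(\p B_R)$ is incompatible with the translator equation together with a uniform bound $v\ge 1/H_0$, and this incompatibility is what forces $|Du|$ to approach $1$ at infinity.
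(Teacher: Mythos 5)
Your proof is correct and follows essentially the same route as the paper: both arguments identify $H=1/\sqrt{1-|Du|^2}$, integrate the divergence form of \eqref{tsl} over large balls, and derive a contradiction from the fact that $\mathrm{Vol}(B_R)$ grows faster than $\mathrm{Area}(\partial B_R)$. The only cosmetic difference is that the paper uses a Lipschitz cutoff supported in $B_{2r}$ instead of applying the divergence theorem directly on $\partial B_R$, which changes nothing of substance since $u$ is smooth on all of $\R^n$.
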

\begin{proof}
Suppose that the mean curvature $H$ is bounded, then there exists a positive constant $\delta>0$ such that $v=\sqrt{1-|Du|^2}\ge\de$. Recall that $g_{ij}=\de_{ij}-u_iu_j$ and $g^{ij}=\de_{ij}+\f{u_iu_j}{1-|Du|^2}$. Let $g=\det g_{ij}$, then by \eqref{tsl} one has
\begin{equation}\aligned\label{4Deu}
\De u=\f1{\sqrt{g}}\sum_{i,j}\p_i\big(g^{ij}\sqrt{g}\p_ju\big)=\f1{\sqrt{g}}\sum_{i}\p_i\left(\f{u_i}v\right)=\f1{v^2}.
\endaligned
\end{equation}
Let $B_r$ be a ball in $\R^n$ with radius $r$ and centered at the origin. Let $\e$ be a nonnegative Lipschitz function with $\e\big|_{B_r}\equiv1$, $|D\e|\le\f1r$ and $\e\big|_{\R^n\setminus B_{2r}}\equiv0$. For any $p=(x,u(x))\in M$, we set $\e(x,u(x))=\e(x)$. Denote $\omega_n$ be the volume of $n$-unit ball. Noting $g^{ij}u_j=\f{u_i}{1-|Du|^2}$, by \eqref{tsl} we have
\begin{equation}\aligned
\omega_nr^n\le&\int_{B_r}\f1vdx\le\int_{\R^n}\f{\e}{v^2}vdx=\int_{\R^n}\e\p_i\big(g^{ij}\sqrt{g}\p_ju\big)dx=-\int_{\R^n}\p_i\e g^{ij}\p_ju\sqrt{g}dx\\
=&-\int_{\R^n}\f{Du\cdot D\e}{v^2}vdx\le\int_{\R^n}\f{|D\e|}vdx\le\f1{r\de}\int_{B_{2r}}dx=\f1\de\omega_n2^nr^{n-1}.
\endaligned
\end{equation}
Selecting sufficiently large $r$, we get the desired contradiction.
\end{proof}
\begin{proposition}
If $u\ge0$ and $\int_Mu^2e^{-u}d\mu<\infty$, then $M$ is noncomplete.
\end{proposition}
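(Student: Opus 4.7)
\emph{Proposal.} The plan is to argue by contradiction: assuming $M$ is complete, a Caccioppoli-type estimate on the weighted Riemannian manifold $(M,e^{-u}d\mu)$ will force $u\equiv 0$, which is incompatible with the soliton equation \eqref{tsl}. The essential observation is that $u$ behaves like a Lyapunov-type function for the drift operator $L$. Indeed, specializing \eqref{naunaf} to $f=u$ gives
$$|\nabla u|^2=\lan\nabla u,\nabla u\ran=\f{|Du|^2}{1-|Du|^2}=\f{1-v^2}{v^2},$$
and \eqref{4Deu} gives $\De u=1/v^2$, so by \eqref{Lf},
$$Lu=\De u-|\nabla u|^2=\f{1}{v^2}-\f{1-v^2}{v^2}=1,\qquad L(u^2)=2uLu+2|\nabla u|^2=2u+2|\nabla u|^2.$$

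Assuming completeness of $M$, I take a smooth cut-off $\eta_R$ with $0\le\eta_R\le 1$, $\eta_R\equiv 1$ on the intrinsic ball $B_R\subset M$, $\mathrm{supp}(\eta_R)\subset B_{2R}$, and $|\nabla\eta_R|\le C/R$. The definition $Lf=e^u\div_M(e^{-u}\na f)$ together with the divergence theorem on $M$ yields the weighted integration-by-parts formula
$$\int_M \phi\, Lf\, e^{-u}d\mu=-\int_M\lan\nabla\phi,\nabla f\ran e^{-u}d\mu\qquad \text{for all } \phi\in C^1_c(M),\ f\in C^2(M).$$
Choosing $\phi=\eta_R^2$ and $f=u^2$, then absorbing the cross term via $4u\eta_R|\lan\nabla u,\nabla\eta_R\ran|\le 2\eta_R^2|\nabla u|^2+2u^2|\nabla\eta_R|^2$, the $|\nabla u|^2$ contributions on the two sides cancel and I arrive at the weighted Caccioppoli-type inequality
$$\int_M u\,\eta_R^2\, e^{-u}d\mu\le \int_M u^2|\nabla\eta_R|^2 e^{-u}d\mu\le \f{C^2}{R^2}\int_M u^2 e^{-u}d\mu.$$

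Letting $R\to\infty$, the hypothesis $\int_M u^2 e^{-u}d\mu<\infty$ drives the right-hand side to zero, while monotone convergence (using $u\ge 0$) gives that the left-hand side tends to $\int_M u\, e^{-u}d\mu$. Hence $u\equiv 0$, so $|Du|\equiv 0$, and \eqref{tsl} would read $0=1$, a contradiction. The only delicate ingredient is producing the cut-off $\eta_R$ with linearly decaying gradient: this is precisely the point where the hypothetical completeness of the induced metric $g_{ij}=\de_{ij}-u_iu_j$ enters, so it is worth recording explicitly even though the construction itself (smoothen the distance from a base point on $M$) is standard. Apart from this, the argument is direct computation plus two applications of Cauchy-Schwarz.
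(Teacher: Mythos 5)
Your proposal is correct and is essentially the paper's own argument: the paper likewise computes $Lu=1$, tests it against $u\e^2$ on geodesic balls (which is the same identity as your $\e^2 L(u^2)$ version after expanding), absorbs the cross term by Young's inequality so the $|\na u|^2$ terms cancel, and lets the radius tend to infinity to force $u\equiv0$, contradicting \eqref{tsl}. Your explicit remark that completeness is needed only to build the cut-off on intrinsic balls matches the paper's use of geodesic balls $D_r$.
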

\begin{proof}
Suppose that $M$ is complete, then we could define a nonnegative Lipschitz function $\e$ in $M$ satisfying $\e\big|_{D_r}\equiv1$, $|\na\e|\le\f1r$ and $\e\big|_{M\setminus D_{2r}}\equiv0$ for any $r>0$. Here, $D_r$ is a geodesic ball with radius $r$ in $M$. By \eqref{4Deu}, we have $$Lu=\De u-|\na u|^2=\f1{v^2}-\f{|Du|^2}{v^2}=1,$$ then
\begin{equation}\aligned
\int_{D_r}ue^{-u}\le&\int_Mu\e^2e^{-u}=\int_Mu\e^2e^{-u}Lu=-\int_M\na u\cdot\na(u\e^2)e^{-u}\\
=&-\int_M\e^2|\na u|^2e^{-u}-2\int_Mu\e\na u\cdot\na\e e^{-u}\\
\le&\int_Mu^2|\na\e|^2e^{-u}\le\f1{r^2}\int_Mu^2e^{-u}.
\endaligned
\end{equation}
Here the volume form $d\mu$ is omitted in the above integrations for notational simplicity. Passing to the limit as $r\rightarrow+\infty$ we get $u\equiv0$, but this is not a solution to \eqref{tsl}.
\end{proof}
We choose a local orthonormal frame field $\{e_1,\cdots, e_n\}$ of $M$ and let $\overline{\na}$ be the Levi-Civita connection of $\R^{n+1}_1$
as before. Let $B$ be the second fundamental form and $B_{e_i e_j}=h(e_i,e_j)\nu=h_{ij}\nu$. Then the coefficients of the second fundamental form $h_{ij}$ is a symmetric $2-$tensor on $M$ and
\begin{equation}\aligned\label{hij}
h_{ij}=-\lan\overline{\na}_{e_i}e_j,\nu\ran.
\endaligned
\end{equation}
By \eqref{HnuE}, mean curvature $H=\sum_ih_{ii}=\f1{\sqrt{1-|Du|^2}}$. Denote the square of the second fundamental form $|B|^2=\sum_{i,j}h_{ij}^2$.
\begin{lemma}\label{LhijB}
Let $h_{ij}$ and $L$ be as defined previously. In the meaning of covariant, we have
\begin{equation}\aligned\label{Lh}
Lh_{ij}=\De h_{ij}+\lan E_{n+1},\na h_{ij}\ran=|B|^2h_{ij}.
\endaligned
\end{equation}
\end{lemma}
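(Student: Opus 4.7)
The plan is a Simons-type computation for spacelike hypersurfaces in Minkowski space, tailored to the translating soliton equation \eqref{HnuE}. Let $\omega:=E_{n+1}^{\top}$ be the tangential projection of $E_{n+1}$ to $M$. Because $\langle\nu,\nu\rangle=-1$ and $\langle\nu,E_{n+1}\rangle=-H$ by \eqref{HnuE}, one has the decomposition $E_{n+1}=\omega+H\nu$. This identity drives the whole proof, because differentiating it along $M$ ties $\nabla\omega$ and $\nabla H$ directly to $h_{ij}$.

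Step 1 (Simons-type identity). Since $\R^{n+1}_1$ is flat, Codazzi reads $h_{ij,k}=h_{ik,j}$, making $h_{ij,k}$ totally symmetric in its three indices. Therefore
$$\Delta h_{ij}=\sum_{k}h_{ij,kk}=\sum_{k}h_{ik,jk}=H_{,ij}-\sum_{k}[\nabla_j,\nabla_k]h_{ik},$$
where $\sum_k h_{ik,k}=H_{,i}$ by Codazzi. The commutator is evaluated via the Gauss equation for a spacelike hypersurface with timelike normal, which in Minkowski space takes the form $R_{ijkl}=-(h_{ik}h_{jl}-h_{il}h_{jk})$ (the minus sign is inherited from $\langle\nu,\nu\rangle=-1$). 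A direct bookkeeping then gives
$$\Delta h_{ij}=H_{,ij}+|B|^{2}h_{ij}-H(h^{2})_{ij},\qquad (h^{2})_{ij}:=\sum_{k}h_{ik}h_{kj}.$$

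Step 2 (Hessian of $H$ via the soliton equation). From \eqref{hij} one has the Weingarten formula $\bar\nabla_{e_i}\nu=\sum_{j}h_{ij}e_{j}$. Since $E_{n+1}$ is parallel in $\R^{n+1}_{1}$, differentiating $E_{n+1}=\omega+H\nu$ and separating tangent and normal parts (keeping in mind $\langle\nu,\nu\rangle=-1$) yields
$$\nabla_{e_i}\omega=-H\sum_{j}h_{ij}e_{j}\qquad\text{and}\qquad H_{,i}=-\sum_{j}h_{ij}\omega_{j},$$
where $\omega_{j}=\langle E_{n+1},e_{j}\rangle$. Differentiating the second relation, substituting $\omega_{k,j}=-Hh_{jk}$ from the first, and applying Codazzi to $h_{ik,j}=h_{ij,k}$ leads to
$$H_{,ij}=-\sum_{k}h_{ij,k}\,\omega_{k}+H(h^{2})_{ij}=-\langle E_{n+1},\nabla h_{ij}\rangle+H(h^{2})_{ij},$$
where in the last equality one uses that $\nabla h_{ij}$ is tangent so pairing it with $\omega$ equals pairing it with $E_{n+1}$.

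Step 3 (Combination). Adding the two identities,
$$Lh_{ij}=\Delta h_{ij}+\langle E_{n+1},\nabla h_{ij}\rangle=\bigl[H_{,ij}+|B|^{2}h_{ij}-H(h^{2})_{ij}\bigr]+\bigl[-H_{,ij}+H(h^{2})_{ij}\bigr]=|B|^{2}h_{ij},$$
which is the desired formula \eqref{Lh}. The cancellation is exactly the reason $L=\Delta+\langle E_{n+1},\nabla\cdot\rangle$ is the correct drift Laplacian adapted to translating solitons in the Lorentzian setting, mirroring the role of Colding--Minicozzi's operator for self-shrinkers. The main obstacle is purely bookkeeping: keeping the Lorentzian sign conventions consistent (the extra minus in the Gauss equation, the decomposition $E_{n+1}=\omega+H\nu$ rather than $\omega-H\nu$, and the Weingarten formula associated with $h_{ij}=-\langle\bar\nabla_{e_i}e_{j},\nu\rangle$); once these signs are fixed, everything is algebraic.
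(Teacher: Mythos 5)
Your proof is correct and follows essentially the same route as the paper: the Simons-type identity $\De h_{ij}=H_{,ij}+|B|^2h_{ij}-H(h^2)_{ij}$ from Ricci/Codazzi plus the Lorentzian Gauss equation, followed by the computation $H_{,ij}=-\lan E_{n+1},\na h_{ij}\ran+H(h^2)_{ij}$ and the cancellation. The only cosmetic difference is that you organize the Hessian computation via the decomposition $E_{n+1}=\omega+H\nu$ and the identities $\na_{e_i}\omega=-Hh_{ij}e_j$, $H_{,i}=-h_{ij}\omega_j$, whereas the paper expands $-(\lan E_{n+1},\nu\ran)_{ij}$ directly in a moving frame; the content is identical.
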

\begin{proof}
By Ricci identity, one has
\begin{equation}\aligned
\De h_{ij}=h_{ijkk}=h_{ikjk}=h_{ikkj}+h_{il}R_{klkj}+h_{kl}R_{ilkj}.
\endaligned
\end{equation}
Combining Gauss formula $R_{ijkl}=-h_{ik}h_{jl}+h_{il}h_{jk}$(see \cite{CY} or \cite{O} for example) and \eqref{HnuE}, we have
\begin{equation}\aligned\label{Deh}
\De h_{ij}=&H_{ij}+h_{il}(-h_{kk}h_{lj}+h_{kj}h_{kl})+h_{kl}(-h_{ik}h_{lj}+h_{ij}h_{kl})\\
=&-(\lan E_{n+1},\nu\ran)_{ij}-Hh_{ik}h_{jk}+|B|^2h_{ij}.
\endaligned
\end{equation}
Since \begin{equation}\aligned
\na_{e_i}(h(e_j,e_k))=&(\na_{e_i}h)(e_j,e_k)+h(\na_{e_i}e_j,e_k)+h(e_j,\na_{e_i}e_k)\\
=&h_{jki}+\lan\na_{e_i}e_j,e_l\ran h_{kl}+\lan\na_{e_i}e_k,e_l\ran h_{jl},
\endaligned
\end{equation}
then
\begin{equation}\aligned\label{dH}
-(\lan E_{n+1},\nu\ran)_{ij}=&-\na_{e_i}\na_{e_j}\lan E_{n+1},\nu\ran+\na_{\na_{e_i}{e_j}}\lan E_{n+1},\nu\ran\\
=&-\na_{e_i}(\lan E_{n+1},e_k\ran h_{jk})+\lan\na_{e_i}e_j,e_k\ran\lan E_{n+1},\overline{\na}_{e_k}\nu\ran\\
=&-\lan E_{n+1},\overline{\na}_{e_i}e_k\ran h_{jk}-\lan E_{n+1},e_k\ran\big(h_{jki}+\lan\na_{e_i}e_j,e_l\ran h_{kl}\\&+\lan\na_{e_i}e_k,e_l\ran h_{jl}\big)+\lan\na_{e_i}e_j,e_k\ran\lan E_{n+1},{e_l}\ran h_{kl}\\
=&-\lan E_{n+1},\nu\ran h_{ik}h_{jk}-\lan E_{n+1},\na_{e_i}e_k\ran h_{jk}-\lan E_{n+1},e_k\ran h_{ijk}\\
&-\lan E_{n+1},e_k\ran\lan\na_{e_i}e_k,e_l\ran h_{jl}\\
=&-\lan E_{n+1},\nu\ran h_{ik}h_{jk}-\lan E_{n+1},e_l\ran\lan e_l,\na_{e_i}e_k\ran h_{jk}-\lan E_{n+1},\na h_{ij}\ran\\
&+\lan E_{n+1},e_k\ran\lan e_k,\na_{e_i}e_l\ran h_{jl}\\
=&H h_{ik}h_{jk}-\lan E_{n+1},\na h_{ij}\ran.
\endaligned
\end{equation}
Combining \eqref{Lf}, \eqref{Deh} and \eqref{dH}, we complete the Lemma.
\end{proof}

Let $u$ be a smooth solution to \eqref{tsl} in $\R^n$. For any constant $h>\inf_{x\in\R^n}u(x)$, we denote
\begin{equation}\aligned
\G_{h,u}=&\{x\in\R^n|\ u(x)=h\},\\
\Om_{h,u}=&\{x\in\R^n|\ u(x)<h\}.
\endaligned
\end{equation}
and call them the level set and the sublevel set of $u$, respectively.

Now we restrict $\G_{h,u}$ as a hypersurface in $\R^n$. Let $\g$ be the unit outward normal vector of $\G_{h,u}$ and $\De_\G$ be the Laplacian operator of $\G_{h,u}$. Let $\na^\G$ and $\na^{\R^n}$ be the Levi-Civita connections of $\G_{h,u}$ and $\R^n$, respectively. If $\{\be_i\}_{i=1}^{n-1}$ is an orthonormal frame of $\G_{h,u}$, then the mean curvature of $\G_{h,u}$  $$H_\G\triangleq-\sum_i\lan\na^{\R^n}_{\be_i}\be_i,\g\ran.$$
At any point of $\G_{h,u}$,
\begin{equation}\aligned\label{LRu}
\De_{\R^n} u=&\sum_i\big(\be_i\be_iu-(\na^{\R^n}_{\be_i}\be_i)u\big)+u_{\g\g}=\sum_i\big(\be_i\be_iu-(\na^{\G}_{\be_i}\be_i)u\big)+H_\G u_\g+u_{\g\g}\\
=&\De_\G u+H_\G u_\g+u_{\g\g}=H_\G u_\g+u_{\g\g},
\endaligned
\end{equation}
and
\begin{equation}\aligned\label{uijij}
u_iu_ju_{ij}=\f12u_i\p_i(|Du|^2)=\f12Du\cdot Du_\g^2=\f12\p_\g u\cdot \p_\g u_\g^2=u_\g^2u_{\g\g}.
\endaligned
\end{equation}
Then combining \eqref{ts}\eqref{LRu}\eqref{uijij}, we have (compared to the Euclidean case\cite{W1})
\begin{equation}\aligned\label{ug}
H_\G u_\g+\f{u_{\g\g}}{1-u_{\g}^2}=1.
\endaligned
\end{equation}

\begin{lemma}\label{Conv}
If $\G_{h,u}$ is a nonempty convex compact set in $\R^n$ with mean curvature $H_\G\le1$, then $u$ is convex in $\Om_{h,u}$.
\end{lemma}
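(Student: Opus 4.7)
The plan is to apply a tensor maximum principle to the second fundamental form $h_{ij}$ of $M$ restricted to the compact piece $M^h := \{(x, u(x)) : x \in \overline{\Omega_{h,u}}\}$, using the equation $Lh_{ij} = |B|^2 h_{ij}$ from Lemma \ref{LhijB}. In the natural graph basis $X_i = \partial_i + u_i E_{n+1}$ of $T_pM$, we have $h(X_i,X_j) = u_{ij}/v$ with $v = \sqrt{1-|Du|^2}>0$, so convexity of $u$ on $\Omega_{h,u}$ is equivalent to the positive semidefiniteness of $h$ on $M^h$; equivalently, to $\tau(p) \ge 0$ on $\overline{M^h}$, where $\tau(p)$ denotes the smallest eigenvalue of $h_p$.

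For the interior argument, if $\tau$ attains a negative minimum at an interior point $p_0$ with $\tau$-eigenvector $\xi_0$, I would parallel-transport $\xi_0$ along geodesics through $p_0$; the smooth function $f(q) = h_q(\xi_0(q),\xi_0(q))/|\xi_0|^2$ satisfies $f\ge\tau$ with equality at $p_0$, hence has a local minimum there. Thus $Lf(p_0)=\Delta f(p_0)\ge 0$, while Lemma \ref{LhijB} combined with the parallelism of $\xi_0$ at $p_0$ gives $Lf(p_0) = |B|^2(p_0)\tau(p_0)$. This forces $\tau(p_0) \ge 0$, ruling out any interior negative minimum (the degenerate case $|B|^2(p_0)=0$ gives $h(p_0)=0$ directly).

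For the boundary behavior at $p \in \Gamma^h := \partial M^h$ projecting to $x \in \Gamma_{h,u}$, I would set up Euclidean coordinates so that $\gamma = e_n$ is the outer unit normal to $\Omega_{h,u}$ and $e_1,\ldots,e_{n-1}$ diagonalize the second fundamental form of $\Gamma_{h,u}$, with principal curvatures $\kappa_i \ge 0$ by convexity. Differentiating $u|_{\Gamma_{h,u}}\equiv h$ twice along $\Gamma$ yields $u_{ij}(x) = \kappa_i u_\gamma \delta_{ij}$ for $i,j<n$, and identity \eqref{ug} gives $u_{nn}(x) = (1-H_\Gamma u_\gamma)(1-u_\gamma^2)\ge 0$ since $H_\Gamma\le1$ and $u_\gamma<1$. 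Hence the diagonal entries of $D^2u(x)$ in this basis are all nonnegative.

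The main obstacle is the off-diagonal entries $u_{in}(x)$ for $i<n$, which are not pinned down by the boundary data alone and could a priori make the Hessian fail to be positive semidefinite. To close the argument, my plan is to invoke the Hopf boundary point lemma applied to $f$ on $M^h$: a strictly negative boundary minimum of $\tau$ at $p_0$ would force the inward normal derivative of $f$ to be strictly positive at $p_0$, and a direct computation using the Codazzi equation, the translating soliton identity $H = 1/v$, and the boundary form of $h$ derived above should yield a contradiction. Together with the interior step, this delivers $\tau \ge 0$ throughout $\overline{M^h}$, and hence convexity of $u$ on $\Omega_{h,u}$.
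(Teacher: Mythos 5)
Your interior step is exactly the paper's argument: if the smallest principal curvature $\la$ of $M$ had a negative minimum at an interior point $p_0$ of $\Om_{h,u}$, one evaluates $L$ on $f=h(\th,\th)$ for a minimizing eigendirection $\th$ frozen at $p_0$ and uses \eqref{Lh} together with $|B|^2\ge H^2/n>0$ to get $0\le Lf(p_0)=|B|^2\la(p_0)<0$. The point of divergence is the boundary. The paper disposes of it in one line: \eqref{ug} gives $u_{\g\g}=(1-u_\g^2)(1-H_\G u_\g)\ge0$, which together with the convexity of $\G_{h,u}$ (controlling the tangential block $u_{ij}=\k_i u_\g\de_{ij}\ge 0$, as you compute) is taken to yield $D^2u\ge0$ on $\G_{h,u}$, hence $\la\ge0$ there by \eqref{hxixj}. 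You are right that the mixed entries $u_{i\g}=\partial_{\be_i}|Du|$ are not pinned down by this data, so nonnegativity of the two diagonal blocks alone does not force $D^2u\ge0$; that is a genuine subtlety which the paper's assertion passes over.

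The difficulty is that your proposed repair is not carried out, and the missing step is precisely the crux: ``a direct computation \dots should yield a contradiction'' is the entire content of the boundary case. Concretely, (i) Hopf's boundary point lemma requires $f$ to satisfy a differential inequality such as $Lf\le cf$ with controlled coefficients in a full one-sided neighborhood of $p_0$, but for your parallel-transported extension the identity $Lf=|B|^2f$ holds only at $p_0$ (away from $p_0$ one picks up terms in $\na\xi_0$ and its second covariant derivatives, and $\tau$ itself is merely Lipschitz); and (ii) even granting a strict sign for the inward normal derivative of $f$ at $p_0$, you never identify the competing identity it is supposed to contradict --- the boundary data $u|_{\G_{h,u}}=h$ and $H_\G\le1$ leave $u_{i\g}$, and hence $\na f$ at $p_0$, unconstrained, so Codazzi plus $H=1/v$ do not visibly produce a contradiction. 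To complete the lemma you must either justify the claim $D^2u|_{\G_{h,u}}\ge0$ outright (i.e., rule out a negative eigenvalue coming from the off-diagonal block) or actually execute the boundary computation; as written, the proof stops exactly where the difficulty you yourself identified begins.
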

\begin{proof}
Since $\G_{h,u}$ is convex, then $0\le H_\G\le1$. Combining $|u_\g|\le1$ and \eqref{ug}, we have
\begin{equation}\aligned\label{ugg}
u_{\g\g}=(1-u_{\g}^2)(1-H_\G u_\g)\ge0,
\endaligned
\end{equation}
which implies $D^2u\big|_{\G_{h,u}}\ge0$.
Let $\f{\p}{\p x_i}=E_i+u_iE_{n+1}$ be a tangential vector field in $M$, then
\begin{equation}\aligned\label{hxixj}
h\left(\f{\p}{\p x_i},\f{\p}{\p x_j}\right)=-\left\lan\overline{\na}_{\f{\p}{\p x_i}}\f{\p}{\p x_j},\nu\right\ran=-u_{ij}\lan E_{n+1},\nu\ran=\f{u_{ij}}{\sqrt{1-|Du|^2}}.
\endaligned
\end{equation}
Let $\la(x)$ be the minimal principal curvature of the second fundamental form at the point $x\in M$(see \cite{CY} or \cite{T} for Ricci curvature). Then \eqref{hxixj} implies $\la\big|_{\G_{h,u}}\ge0$.
If $\la(x)$ is not nonnegative in $\Om_{h,u}$, then there is a point $p_0\in\Om_{h,u}$ such that $\la(p_0)=\inf_{x\in\Om_{h,u}}\la(x)<0$. Choose a local orthonormal frame $\{e_i\}$ near $p_0$ in $M$ and denote $h_{ij}=h(e_i,e_j)$ as mentioned before. Let $\th=\sum_i\th_ie_i\big|_{p_0}$ be a unit eigenvector of the second fundamental form with eigenvalue $\la(p_0)$ at the point $p_0$, namely, $h(\th,\th)=\la(p_0)$. Then we define a smooth function by
$$f(x)=\sum_{i,j}h_{ij}\Big|_x\th_i\th_j.$$
$f$ attains the minimal value $\la(p_0)$ in a neighborhood of $p_0$.
At the point $p_0$, by \eqref{Lh} we have
\begin{equation}\aligned
0\le Lf= L(h_{ij}\th_i\th_j)=L(h_{ij})\th_i\th_j=|B|^2h_{ij}\th_i\th_j=|B|^2f\le\f{H^2}nf<0.
\endaligned
\end{equation}
This is a contradiction. Therefore, $(h_{ij})$ is nonnegative in $\overline{\Om}_{h,u}$, which yields the Lemma.
\end{proof}
\begin{lemma}\label{tds}
Let $\Om$ be a bounded convex domain with smooth boundary in $\R^n$ and $\Om_\si=\{\sigma x|\ x\in\Om\}$ for $\si\in(0,1]$. If $u_\si$ is a smooth strictly spacelike solution to \eqref{ts} in $\Om_\si$ with $u_\si\big|_{\p\Om_\si}=0$, then there is a constant $\th\in(0,1)$ depending only on the diameter of $\overline{\Om}$, such that $\max_{\overline{\Om}_\si}|Du_\si|\le1-\th$.
\end{lemma}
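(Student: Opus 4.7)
My plan is first to use the subsolution inequality \eqref{bge} for $|Du_\sigma|^{2}$ to reduce the bound to the boundary of $\Om_\sigma$, and then to construct an explicit boundary barrier built from the one-dimensional translating soliton $\log\cosh$ together with a supporting hyperplane of the convex set $\Om_\sigma$.

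Write $Q:=|Du_\sigma|^{2}$. Equation \eqref{bge} rewrites as
\begin{equation*}
g^{ij}Q_{ij}+b^{j}Q_{j}\ge 0,\qquad b^{j}:=\tfrac{1}{v^{2}}g^{ij}Q_{i},
\end{equation*}
a linear elliptic inequality in $Q$ whose coefficients are smooth and bounded on $\overline{\Om_\sigma}$ because $u_\sigma$ is smooth and strictly spacelike there. The weak maximum principle then gives $\max_{\overline{\Om_\sigma}}Q=\max_{\p\Om_\sigma}Q$. Applying the scalar maximum principle to $\mathcal{L}u_\sigma=1>0$ with vanishing boundary data also forces $u_\sigma\le 0$ in $\Om_\sigma$, so at each $x_0\in\p\Om_\sigma$ the tangential gradient of $u_\sigma$ vanishes while $\p_\nu u_\sigma(x_0)\ge 0$, where $\nu$ is the outward unit normal. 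Hence $|Du_\sigma(x_0)|=\p_\nu u_\sigma(x_0)$, and it suffices to bound $\p_\nu u_\sigma$ from above uniformly on $\p\Om_\sigma$.

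For that boundary bound, set $d:=\mathrm{diam}(\overline{\Om})$ and $L:=d+1$, and for any fixed $x_0\in\p\Om_\sigma$ with outward normal $\nu$ put $s(x):=\lan x-x_0,-\nu\ran$. Convexity of $\Om_\sigma\subset\Om$ together with the supporting hyperplane at $x_0$ give $0\le s(x)\le d<L$ throughout $\overline{\Om_\sigma}$. Define the barrier
\begin{equation*}
\eta(x):=\log\cosh\bigl(s(x)-L\bigr)-\log\cosh(L).
\end{equation*}
A short computation (using $(\log\cosh)''=1-\tanh^{2}$, analogous to the $n=1$ translating soliton) gives $\mathcal{L}\eta=1$ and $|D\eta|=|\tanh(s-L)|<1$. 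Since $\log\cosh$ is even and strictly increasing on $[0,\infty)$ and $|s(x)-L|\le L$ with equality iff $s(x)=0$, one has $\eta\le 0$ on $\overline{\Om_\sigma}$ and $\eta(x_0)=0$.

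Now apply Lemma \ref{max} with $u=u_\sigma$ and $\underline u=\eta$ (both satisfying $\mathcal{L}\cdot=1$): since $\eta\le 0=u_\sigma$ on $\p\Om_\sigma$, we conclude $u_\sigma\ge\eta$ on $\overline{\Om_\sigma}$, with equality at $x_0$. Taking the inward normal derivative at this boundary minimum of $u_\sigma-\eta$ yields
\begin{equation*}
\p_\nu u_\sigma(x_0)\le \p_\nu\eta(x_0)=\tanh(L)=\tanh(d+1).
\end{equation*}
Combined with the first reduction, $\max_{\overline{\Om_\sigma}}|Du_\sigma|\le\tanh(d+1)$, so the lemma holds with $\theta:=1-\tanh(d+1)\in(0,1)$, depending only on $d$. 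The main technical point will be the first-step maximum principle: the quadratic gradient term in \eqref{bge} must be re-read as a linear first-order term $b^{j}Q_{j}$ whose coefficient freezes against the a priori smooth strictly spacelike $u_\sigma$, giving legitimate bounded coefficients on $\overline{\Om_\sigma}$. The rest is essentially forced: convexity of $\Om_\sigma$ is precisely what keeps the supporting-hyperplane coordinate $s(x)$ nonnegative on the whole domain, and $\log\cosh$ is simply the one-dimensional translating soliton shifted so that its graph touches the graph of $u_\sigma$ tangentially at $x_0$.
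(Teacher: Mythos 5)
Your proof is correct and follows essentially the same route as the paper: both arguments reduce to the boundary via the maximum principle for \eqref{bge} and bound the boundary gradient with a one-dimensional $\log\cosh$ translating-soliton barrier adapted to a supporting hyperplane of the convex domain, compared via Lemma \ref{max}. The only (immaterial) difference is that you use a single barrier touching from below together with a normal-derivative comparison and the sign $\p_\nu u_\si\ge0$, whereas the paper squeezes $u_\si$ between $\pm\varphi$ in a slab and reads off $|Du_\si(y)|\le|D\varphi(y)|\le\tanh d$ directly.
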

\begin{proof}
Let $d=\mathrm{diam}(\overline{\Om})$ be the diameter of $\overline{\Om}$. For any fixed $y\in\p\Om_\si$, we assume $y=(y_1,0\cdots,0)$ and $\Om_\si\subset\{x=(x_1,\cdots,x_n)|\ -y_1<x_1<y_1\}$ with $0<y_1\le\f d2$. Let $\varphi(x)=\log\cosh(x_1)-\log\cosh(y_1)$, then $\mathcal{L}\varphi=1$, $\varphi(y)=0$ and $\varphi\big|_{\p\Om_\si}\le0$. Using Lemma \ref{max} for $\pm\varphi$, one gets
$$\varphi(x)\le u_\si(x)\le-\varphi(x) \qquad \mathrm{for}\ x\in\Om_\si.$$
Combining $u_\si\big|_{\p\Om_\si}=\varphi(y)=0$ gives
$$|Du_\si(y)|\le|D\varphi(y)|\le\tanh d.$$
By the maximum principle for \eqref{bge}, we complete the proof.
\end{proof}

Now let us consider a Dirichlet problem, which may be inconvenient to be found out directly in general theories of PDE.
\begin{theorem}\label{Dirichlet}
Let $\Om$ be a bounded convex domain with smooth boundary in $\R^n$. Then the Dirichlet problem $\mathcal{L}u=1$ in $\Om$ with $u\big|_{\p\Om}=0$ has a smooth strictly spacelike solution $u$ on $\overline{\Om}$.
\end{theorem}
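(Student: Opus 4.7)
My plan is to combine the variational machinery of Section 2 with the radial barriers of Section 3. As noted in the paragraph preceding the theorem, any maximizing sequence for $F_\Omega$ on the equicontinuous class $\mathcal{C}(0,\Omega)$ admits a uniformly convergent subsequence, producing a maximizer $u_0\in\mathcal{C}(0,\Omega)$. Such a $u_0$ is a weakly spacelike solution of the variational functional $F$, so both Lemma \ref{comp} and the light-ray Lemma \ref{line} apply. It will then suffice to prove $|Du_0|<1$ uniformly on $\overline{\Omega}$: once the equation $\mathcal{L}u_0=1$ is uniformly elliptic, standard quasilinear elliptic regularity (De Giorgi--Nash--Moser for $C^{1,\alpha}$, Schauder for $C^{2,\alpha}$, bootstrap for $C^\infty$) promotes $u_0$ to a smooth classical solution.

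The first key step is a boundary gradient bound $|Du_0|\le 1-\theta_0$ near $\partial\Omega$. Because $\partial\Omega$ is smooth and convex, every boundary point $x_0$ admits an interior tangent ball $B_r(y_0)\subset\Omega$ of a fixed radius $r>0$. Applying Theorem \ref{test} on $B_r(y_0)$ with boundary value $C=u_0(x_0)-u_0(y_0)=-u_0(y_0)$ produces a smooth strictly spacelike radial solution $\phi_0$ of $\mathcal{L}\phi=1$ on $B_r(y_0)$ whose graph agrees with $u_0$ at $x_0$. By Lemma \ref{comp} on $B_r(y_0)$, $u_0$ is trapped between two such radial barriers in a one-sided neighborhood of $x_0$, so the explicit gradient estimates for $\phi_0$ (through the controls $\phi_0\le w_{K_1}$ and $\phi_0\ge\widetilde{w}_{K_2}$ of Theorem \ref{test}) transfer to $u_0$ and yield the desired uniform $\theta_0>0$ independent of $x_0\in\partial\Omega$.

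The second step excludes null directions in the interior. If there were $x_0,x_1\in\overline{\Omega}$ with $\overline{x_0x_1}\subset\Omega$ and $u_0(x_t)=u_0(x_0)+t|x_0-x_1|$ on $[0,1]$, Lemma \ref{line} would propagate this equality along the full line inside $\Omega$ until it exits at a boundary point, at which the tangential component of $Du_0$ would have modulus $1$, contradicting the boundary bound. Thus the graph of $u_0$ contains no null segment. Combined with the barriers $\widetilde{w}_K$ applied on small balls around any putative point where $|Du_0|$ attains $1$, Lemma \ref{comp} then forces the pointwise strict inequality $|Du_0|<1$ in $\Omega$, and a compactness argument together with the boundary bound upgrades this to the uniform gap $|Du_0|\le 1-\theta$ on $\overline{\Omega}$.

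The main obstacle I anticipate is the boundary comparison step: because $u_0$ is a priori only Lipschitz, the radial barrier $\phi_0$ must be shown to lie globally on one side of $u_0$ on the entire interior tangent ball $B_r(y_0)$, which requires invoking the weak comparison Lemma \ref{comp} in place of the classical maximum principle. The uniform radius $r>0$ afforded by the smoothness and convexity of $\partial\Omega$ is essential in avoiding the degeneration $r\to 0$ in the barrier estimates of Theorem \ref{test}.
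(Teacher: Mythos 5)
Your strategy---take the maximizer $u_0$ of $F_\Om$ over $\mathcal{C}(0,\Om)$ and then upgrade it to a smooth strictly spacelike solution---is genuinely different from the paper's proof, which never uses the variational maximizer: the paper runs a Schauder fixed-point argument on the family $\mathcal{L}u=\si$, the required a priori bounds being the gradient bound of Lemma \ref{tds} (obtained from the slab barrier $\log\cosh(x_1)-\log\cosh(y_1)$, which works because convexity places $\Om$ in a slab) and a $C^2$ bound coming from the boundary Hessian identity \eqref{ugg} together with the maximum principle for the second fundamental form via $Lh_{ij}=|B|^2h_{ij}$. Your route is in the spirit of Bartnik--Simon, but as written it has two real gaps. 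The first is the boundary gradient estimate: the radial comparison function $\phi_0(|x-y_0|)+u_0(y_0)$ from Theorem \ref{test} is constant, equal to $u_0(x_0)=0$, on the \emph{entire} sphere $\p B_r(y_0)$, whereas $u_0$ vanishes only at the tangency point $x_0$ and may differ from $0$ by up to $2r$ elsewhere on that sphere. Lemma \ref{comp} applied on $B_r(y_0)$ therefore only gives $u_0\le \phi_0+\sup_{\p B_r(y_0)}(u_0-\phi_0)$, and this error term destroys the contact at $x_0$, so no bound on $|Du_0(x_0)|$ follows. (A secondary issue: Theorem \ref{test} asserts smoothness of $\phi_0$ only on $(0,r)$ and pinches $\phi_0$ between cones at $t=0$, so $\phi_0(|x-y_0|)$ is not obviously an admissible smooth strictly spacelike comparison function on the full ball.) The slab barrier of Lemma \ref{tds} is the correct substitute.

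The second gap is the regularity step. The maximizer $u_0$ does not a priori satisfy the Euler--Lagrange equation even in the weak sense, because the constraint $|Dw|\le1$ forbids two-sided variations $u_0+t\e$; one must first prove $|Du_0|\le 1-\th$ locally before the equation, and hence any elliptic regularity theory, becomes available. Moreover, the absence of null segments (which does follow from Lemma \ref{line} together with $u_0|_{\p\Om}=0$) does not by itself yield the pointwise bound $|Du_0|<1$ for a Lipschitz function, let alone a uniform gap on compact sets; closing this is essentially the content of the Bartnik--Simon regularity theorem and cannot be dispatched by ``a compactness argument.'' If you want a self-contained proof, the cleaner path is the paper's: establish the a priori $C^1$ and $C^2$ estimates for smooth solutions and produce the solution by the fixed point theorem applied to the truncated solution operator $T^*$.
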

\begin{proof}
For $f\in C^{2,\a}(\overline{\Om})$, $|Df|<1$ and $\sigma\in(0,1]$, we define $Q_\sigma$ by
\begin{equation}\aligned
Q_\sigma f=\sum_{i,j}\left(\de_{ij}+\f{f_if_j}{1-|Df|^2}\right)f_{ij}-\sigma.
\endaligned
\end{equation}
If $u_\sigma\in C^{2,\a}(\overline{\Om})$ is a strictly spacelike solution to $Q_\sigma u_\sigma=0$ in $\Om$ and $u_\sigma\big|_{\p\Om}=0$, then $u_\si\in C^{\infty}(\overline{\Om})$ by standard regularity theory of elliptic equations. Let $\Om_\sigma=\{\sigma x|\ x\in\Om\}$ and
\begin{equation}\aligned\label{uusi}
u(x)=\si u_\si\left(\f x{\si}\right),
\endaligned
\end{equation}
then $u$ is a smooth strictly spacelike solution to \eqref{ts} in $\Om_\sigma$.

By Lemma \ref{tds}, there is a constant $\th\in(0,1)$ depending only on the diameter of $\overline{\Om}$, such that $\max_{\overline{\Om}_\si}|Du|\le1-\th$. If the mean curvature of $\p\Om$ satisfies $0\le H_{\p\Om}\le C_1$ for some $C_1\ge1$, then $$0\le H_{\p\Om_\si}\le \f{C_1}{\si}.$$ Let $\g$ be the unit outward normal vector of $\p\Om_\si$, then by \eqref{ugg}, we have $1-\f{C_1}{\si}\le u_{\g\g}\le1$. Hence, $$D^2u+\left(\f{C_1}{\si}-1\right)I$$ is positive definite on $\p\Om_\si$, where $I$ is a unit $n\times n$ matrix. Moreover, $1-\f{C_1}{\si}\le u_{ii}\le C_2$ for some $C_2$ depending only on $n,\Om$ and
$$\big(u_{ij}\big)^2\le \left(u_{ii}+\f{C_1}{\si}-1\right)\left(u_{jj}+\f{C_1}{\si}-1\right)\le\left(C_2+\f{C_1}{\si}-1\right)^2\qquad  \mathrm{for}\ i\neq j.$$
Let $h_{ij}$ be defined as \eqref{hij}, where we replace $M$ by $\{(x,u(x))|\ x\in\Om_\si\}$. By the proof of Lemma \ref{Conv}, the matrix $(h_{ij})$ must attain its negative minimal eigenvalue and positive maximal eigenvalue on the boundary $\p\Om_\si$. Combining \eqref{hxixj} and $\max_{\overline{\Om}_\si}|Du|\le1-\th$, we get that there is a constant $C_3$ depending only on $n,\Om$ such that $$\max_{x\in\overline{\Om}_\si}u_{ij}(x)\le\f{C_3}{\si} \qquad\qquad \mathrm{for}\ i,j=1,\cdots,n.$$
\eqref{uusi} implies $\max_{\overline{\Om}}\p_{ij}u_{\si}\le C_3$. For some $\be\in(0,1)$ there is a constant $C_\be$ depending only on $n,\be,\Om$ such that
\begin{equation}\aligned\label{u1si}
|u_\si|_{C^{1,\be}(\overline{\Om})}\le C_\be\quad \mathrm{and}\quad \max_{\overline{\Om}}|Du_\si|\le1-\th.
\endaligned
\end{equation}

For any $f\in C^{1,\be}(\overline{\Om})$ with $\max_{\overline{\Om}}|Df|<1$, the operator $T$ is defined by letting $u=Tf$ be the unique solution in $C^{2,\a\be}(\overline{\Om})$ of the linear Dirichlet problem (see \cite{GT} for example):
$$\sum_{i,j}\left(\de_{ij}+\f{f_if_j}{1-|Df|^2}\right)u_{ij}-1=0\ \ \ \mathrm{in}\ \Om,\ \ \ u\big|_{\p\Om}=0.$$
If $|\cdot|_{C^{1,\be}(\overline{\Om})}$ is a norm in H$\mathrm{\ddot{o}}$lder space $C^{1,\be}(\overline{\Om})$, then we may define a new norm $\|\cdot\|$ in $C^{1,\be}(\overline{\Om})$ by $$\|f\|=\max_{\overline{\Om}}|Df|+\f{\th}{2C_\be}|f|_{C^{1,\be}(\overline{\Om})}.$$
Let $\mathfrak{S}=\{f\in C^{1,\be}(\overline{\Om})\big|\ \|f\|\le1-\f{\th}4\}$ be a closed ball in $C^{1,\be}(\overline{\Om})$. We define a mapping $T^*$ by
\begin{eqnarray*}
   T^*f\triangleq \left\{\begin{array}{cc}
     Tf     & \quad\ \ \ {\rm{if}} \ \ \  \|Tf\|\le 1-\f{\th}4 \\ [3mm]
     \left(1-\f{\th}4\right)\f{Tf}{\|Tf\|}  & \quad\quad\  {\rm{if}} \ \ \  \|Tf\|\ge 1-\f{\th}4,
     \end{array}\right.
\end{eqnarray*}
then $T^*$ is a mapping from $\mathfrak{S}$ to $\mathfrak{S}$. By Arzela's theorem, $T^*$ is a compact operator in $\mathfrak{S}$. By Corollary 11.2 in \cite{GT}, $T^*$ has a fixed point $w$ in $\mathfrak{S}$. If $\|Tw\|\ge1-\f{\th}4$, then $w=T^*w=\si Tw$ if $\si=\left(1-\f{\th}4\right)\f1{\|Tw\|}$, and $\|w\|=\|T^*w\|=1-\f{\th}4$. The definition of $T$ implies
$$\sum_{i,j}\left(\de_{ij}+\f{w_iw_j}{1-|Dw|^2}\right)w_{ij}-\si=0\ \ \ \mathrm{in}\ \Om\ \ \mathrm{and}\ \ \ w\big|_{\p\Om}=0.$$
By \eqref{u1si}, we have $\|w\|\le1-\th+\f{\th}{2C_\be}C_\be=1-\f{\th}2,$ which is a contradiction. Hence, $\|Tw\|<1-\f{\th}4$ and $w=Tw$. By the standard regularity theory of elliptic equations, $w$ is our desired function.
\end{proof}

\section{Entire spacelike translating solitons}

By \cite{E}, the elliptic equation \eqref{tsl} has a radially symmetric solution $\psi(r)$, where $\psi(r)\in C^2([0,+\infty))$ satisfies the following ODE
\begin{equation}\aligned\label{rts}
\f{\psi''}{1-(\psi')^2}+\f{n-1}r\psi'=1\qquad \mathrm{for}\ r\in (0,+\infty)
\endaligned
\end{equation}
with $\psi'(0)=0$ in an appropriate coordinate system. By \cite{J}, up to an additive constant \eqref{rts} has a unique smooth convex solution $\psi$ with $\f{r}{\sqrt{n^2+r^2}}\le\psi'<1$ for $r\ge0$. Thus, for $r\ge0$ one has
\begin{equation}\aligned\label{psir}
r-n\le\int_0^r\f{s}{\sqrt{n^2+s^2}}ds\le\int_0^r\psi'(s)ds=\psi(r)-\psi(0)\le r.
\endaligned
\end{equation}

\begin{lemma}\label{Converge}
Let $\Om_k$ be a family of convex domains with smooth boundaries ($\p\Om_k$ may be empty), $\overline{\Om}_k\subset\Om_{k+1}$ and $\bigcup_{k\ge1}\Om_k=\R^n$. Let $u_k$ be a smooth convex strictly spacelike function to \eqref{ts} in $\Om_k$. Denote $x_t=x+t(y-x)$ for $x,y\in\R^n$. Suppose that there is a function $W$ in $\R^n$ satisfying
$$\limsup_{t\rightarrow+\infty}\f{|W(x_t)-W(x_{-t})|}{|x_t-x_{-t}|}<1$$
for any $x\neq y\in\R^n$. If $\limsup_{k\rightarrow\infty}|u_k(x)-W(x)|\le C$ for some absolute constant $C$ and any $x\in\R^n$, then there is a subsequence of $\{u_k\}$ converging to an entire smooth convex strictly spacelike function $u$ to \eqref{ts} in $\R^n$.
\end{lemma}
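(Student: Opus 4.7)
The plan is a three-step compactness--identification--regularity argument. First, the hypothesis $\limsup_k|u_k(x)-W(x)|\le C$ gives a pointwise bound on each $u_k$ over every compact $K\subset\R^n$ (using $K\subset\Om_k$ for large $k$), and convex weakly spacelike functions are $1$-Lipschitz; so by Arzel\`a--Ascoli a subsequence converges locally uniformly to a convex weakly spacelike $u:\R^n\to\R$ satisfying $|u-W|\le C$. Since each $u_k$ solves \eqref{ts} smoothly, Theorem \ref{uni} says $u_k$ maximizes $F_\Om$ over $\mathcal{C}(u_k|_{\p\Om},\Om)$ on every smooth bounded $\Om$ with $\overline{\Om}\subset\Om_k$; continuity of $F_\Om$ under uniform convergence of weakly spacelike functions then makes $u$ itself a weakly spacelike maximizer, so Lemma \ref{line} is available for $u$.

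The heart of the argument is showing $u$ is strictly spacelike. Suppose, to the contrary, that there exist distinct $x,y\in\R^n$ with $u(y)-u(x)=|y-x|$; convexity together with $|Du|\le 1$ forces $u(x_s)=u(x)+s|y-x|$ for all $s\in[0,1]$, where $x_s=x+s(y-x)$. Lemma \ref{line}, applied inside any large ball containing the segment, extends this identity to every $s\in\R$, giving
$$u(x_t)-u(x_{-t})=|x_t-x_{-t}|\qquad\text{for all }t>0.$$
Combined with $|u-W|\le C$ this yields $|W(x_t)-W(x_{-t})|\ge|x_t-x_{-t}|-2C$, and dividing by $|x_t-x_{-t}|\to\infty$ produces
$$\limsup_{t\to\infty}\frac{|W(x_t)-W(x_{-t})|}{|x_t-x_{-t}|}\ge 1,$$
contradicting the hypothesis on $W$. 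So no such pair $(x,y)$ exists.

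To upgrade convergence, fix a compact $K\subset\R^n$. If $|Du_{k_j}(x_j)|\to 1$ along a subsequence with $x_j\in K$, then passing $j\to\infty$ in the supporting hyperplane inequality $u_{k_j}(y)\ge u_{k_j}(x_j)+Du_{k_j}(x_j)\cdot(y-x_j)$ produces a unit vector in $\partial u(x_\infty)$ at a cluster point $x_\infty\in K$, hence a null ray in the graph of $u$ starting at $x_\infty$; the strict spacelikeness already proved (via Lemma \ref{line} applied to any subsegment of that ray) excludes this, so $\max_K|Du_k|\le 1-\de$ for some $\de>0$ and all large $k$. Equation \eqref{ts} is then uniformly elliptic with smooth coefficients on $K$, and standard interior H\"older and Schauder estimates together with bootstrapping yield uniform $C^m$ bounds on compact subsets of $K$ for every $m$; a diagonal subsequence converges in $C^\infty_{\mathrm{loc}}(\R^n)$ to $u$, giving the desired entire smooth convex strictly spacelike solution to \eqref{ts}. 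The main obstacle is the strict-spacelikeness step: one must both pass the variational maximizer property from the smooth $u_k$ to the merely Lipschitz limit $u$ so that Lemma \ref{line} applies, and then leverage the null-line rigidity it provides against the one-sided blowdown-type hypothesis on $W$ to rule out any null segment.
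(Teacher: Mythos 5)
Your overall architecture matches the paper's: compactness, passing the variational (maximizer) property to the limit so that Lemma \ref{line} applies, ruling out null segments via the growth hypothesis on $W$, and then elliptic regularity. Steps one, three and four are essentially sound (the paper gets the gradient bound differently, from the uniform Hessian bound $|\p_{ij}u_k|\le 1$ that convexity plus \eqref{ts} provides, rather than your contradiction argument, but both routes work). The problem is step two.

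The assertion that ``continuity of $F_\Om$ under uniform convergence of weakly spacelike functions'' makes $u$ a maximizer is a genuine gap: $F_\Om$ is \emph{not} continuous under uniform convergence. A sawtooth $w_k$ with slopes $\pm1$ and amplitude $1/k$ converges uniformly to $0$ while $F_\Om(w_k)=0\not\to F_\Om(0)>0$; only upper semicontinuity holds (this is exactly what the paper's Appendix proves), and upper semicontinuity is the wrong direction here. To conclude $F_\Om(u)\ge F_\Om(w)$ for a competitor $w\in\mathcal{C}(u,\Om)$ from $F_\Om(u_k)\ge F_\Om(w_k)$ you would need competitors $w_k$ with the boundary values of $u_k$ (which differ from those of $u$) \emph{and} with $\liminf_k F_\Om(w_k)\ge F_\Om(w)$; the latter is precisely a lower-semicontinuity statement that fails in general, and fixing the boundary data while keeping $|Dw_k|\le1$ requires a nontrivial interpolation of the kind used in the proof of Theorem \ref{uni}. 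The paper circumvents all of this: it takes $w_0$ the maximizer of $F_\Om$ over $\mathcal{C}(u,\Om)$, applies Theorem \ref{uni} to the smooth solution $u_{k_j}$ on the super-level sets $\Om_{j,\ep}^+=\{u_{k_j}>w_0-\ep\}$, on whose boundaries $u_{k_j}$ and $w_0-\ep$ agree, obtaining $F_{\Om_{j,\ep}^+}(w_0-\ep)\le F_{\Om_{j,\ep}^+}(u_{k_j})$, and then lets $j\to\infty$ and $\ep\to0$ with some care about the measure of the coincidence sets. Without this (or an equivalent) argument, you have not established that $u$ is a weak maximizer, so Lemma \ref{line} is not available and the strict-spacelikeness step — which you correctly identify as the heart of the proof — does not get off the ground.
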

\begin{proof}
By convexity of $u_k$ and \eqref{ts}, we have
$$1=\sum_{i=1}^n\left(\de_{ij}+\f{\p_iu_k\p_ju_k}{1-|Du_k|^2}\right)\p_{ij}u_k\ge\sum_i\p_{ii}u_k.$$
For any $i,j=1,\cdots,n$ one gets $$\big(\p_{ij}u_k\big)^2\le\p_{ii}u_k\p_{jj}u_k\le\left(\f{\p_{ii}u_k+\p_{jj}u_k}2\right)^2\le1.$$
There is a subsequence $\{u_{k_j}\}$ of $\{u_j\}$ converging to $u$ in $C^1(K)$ uniformly for any compact set $K\subset\R^n$. Then $u\in C^{1,1}(\R^n)$ is a convex function with $|Du|\le1$. Clearly,
$$W(x)-C\le u(x)\le W(x)+C\qquad \mathrm{for\ each\ } \ x\in\R^n.$$
Let $\Om$ be an arbitrary bounded domain in $\R^n$. By the definition of $F_{\Om}(\cdot)$ in $\S$ 2 and the Appendix, there is a weakly spacelike function $w_0\in \mathcal{C}(u,\Om)$ such that $$F_{\Om}(w_0)=\sup_{w\in\mathcal{C}(u,\Om)}F_{\Om}(w).$$
For any $\ep>0$, denote $\Om_{j,\ep}^+=\{x\in\Om|\ u_{k_j}(x)>w_0(x)-\ep\}$ and $\Om_\ep^+=\{x\in\Om|\ u(x)>w_0(x)-\ep\}$.
By Theorem \ref{uni} one has
$$F_{\Om_{j,\ep}^+}(w_0-\ep)\le F_{\Om_{j,\ep}^+}(u_{k_j})\le F_{\Om_\ep^+}(u_{k_j})+F_{\Om_{j,\ep}^+\setminus\Om_\ep^+}(u_{k_j}).$$
If the set $\G_\ep=\{x\in\Om|\ u(x)=w_0(x)-\ep\}$ has positive $n$-dimensional Lebesgue measure, then for any open set $U_\ep\subset\G_\ep$, $F_{U_\ep}(w_0-\ep)=F_{U_\ep}(u)$.
Since $u_{k_j}\rightarrow u$ in $C^1-$norm, then $$\Om^+_\ep\subset\liminf_{j\to\infty}\Om_{j,\ep}^+\subset\limsup_{j\to\infty}\Om_{j,\ep}^+\subset\Om_\ep^+\cup\G_\ep.$$
Letting $j\to\infty$ gives
\begin{equation}\label{eepFomep}
e^{\ep}F_{\Om_\ep^+}(w_0)=F_{\Om_\ep^+}(w_0-\ep)\le F_{\Om_\ep^+}(u).
\end{equation}
If the set $\G=\{x\in\Om|\ u(x)=w_0(x)\}$ has positive $n$-dimensional Lebesgue measure, then for any open set $U\subset\G$, $F_{U}(w_0)=F_{U}(u)$. Denote $\Om^+=\{x\in\Om|\ u(x)>w_0(x)\}$ and $\Om^-=\{x\in\Om|\ u(x)<w_0(x)\}$, then $$\Om^+\subset\liminf_{\ep\to0}\Om_{\ep}^+\subset\limsup_{\ep\to0}\Om_{\ep}^+\subset\Om^+\cup\G.$$
Let $\ep\to 0$ in \eqref{eepFomep}, we obtain
$$F_{\Om^+}(w_0)\le F_{\Om^+}(u).$$
By the same way, one gets $F_{\Om^-}(w_0)\le F_{\Om^-}(u).$ $\Om=\Om^+\cup\Om^-\cup\G$ implies
$$F_\Om(w_0)=F_{\Om^+}(w_0)+F_{\Om^-}(w_0)+F_{\G}(w_0)\le F_{\Om}(u).$$
Thus the definition of $w_0$ tells us that $u$ is a weak solution to the variational functional $F$ in any bounded domain $\Om$.

If there is $x\neq y\in\R^n$ such that $|u(x)-u(y)|=|x-y|$. Then by Lemma \ref{line}, we obtain
\begin{equation}\aligned\label{uxt}
|u(x_t)-u(x_{-t})|=|x_t-x_{-t}| \qquad \mathrm{for}\ \forall t\in\R,
\endaligned
\end{equation}
where $x_t=x+t(y-x)$. The definition of $W$ and $W(x)-C\le u(x)\le W(x)+C$ imply that \eqref{uxt} is impossible. Hence $|u(x)-u(y)|<|x-y|$ for any $x\neq y$. Noting the convexity of $u\in C^{1,1}(\R^n)$, we get $|Du|<1$.
The regularity theory of elliptic equations and $u\in C^{1,1}(\R^n)$ force that $u$ is a smooth solution to \eqref{tsl}.
\end{proof}
Let $Q$ be defined as in $\S$ 1 and $Q_0$ be the set that contains all linear functions through the origin whose gradient has norm one.
Denote $\Sp^{n-1}$ be the unit sphere centered at the origin, then for any $V\in Q\setminus Q_0$ there is a closed set $\Lambda\subset\Sp^{n-1}$
containing two points at least, such that $V(x)=\sup_{\la\in\Lambda}\lan\la,x\ran$ (see \cite{CT} for example).
\begin{theorem}
For $n\ge2$ and any $V\in Q\setminus Q_0$ there is an entire smooth convex strictly spacelike solution $u$ to \eqref{tsl} such that $u$ blows down to $V$.
\end{theorem}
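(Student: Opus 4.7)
The plan is to construct a sequence $\{u_k\}$ of smooth convex strictly spacelike solutions of $\mathcal{L}u_k=1$ on an exhaustion $\Omega_k \nearrow \R^n$ by smooth bounded convex domains, establish a uniform bound $|u_k-V|\le C$ on compacta, and then invoke Lemma \ref{Converge} with $W=V$. A preliminary observation is that every $V\in Q\setminus Q_0$ automatically satisfies the hypothesis of Lemma \ref{Converge}: writing $V(z)=\sup_{\la\in\Lambda}\lan\la,z\ran$ with $\Lambda\subset\Sp^{n-1}$ closed and containing at least two points, positive homogeneity of degree one gives
$$\lim_{t\to+\infty}\f{V(x_t)-V(x_{-t})}{|x_t-x_{-t}|}=\f{\sup_{\la\in\Lambda}\lan\la,e\ran+\inf_{\la\in\Lambda}\lan\la,e\ran}{2},\qquad e=\f{y-x}{|y-x|},$$
whose absolute value is strictly less than $1$ because the sup and inf cannot both equal $\pm 1$ unless $\Lambda$ reduces to a single point, excluded by $V\notin Q_0$.

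For the approximating problem, I would take $\Omega_k$ to be smoothings of $\{V+\psi<k\}\cap B_{R_k}(0)$ for suitable $R_k\to\infty$, with $\psi$ the radial translating soliton of \cite{J}; the extra term $\psi$ enforces boundedness in the directions where $V$ is negative while preserving convexity. On each $\Omega_k$ I would produce a smooth strictly spacelike $u_k$ with $\mathcal{L}u_k=1$ and boundary data close to $V|_{\p\Omega_k}$, combining (i) Theorem \ref{Dirichlet} (which gives existence on smooth bounded convex domains with zero boundary data, and extends to constant boundary data since $\mathcal{L}$ does not involve $u$ itself); (ii) the variational existence of weak maximizers for $\sup_{w\in\mathcal{C}(\varphi_k,\Omega_k)}F_{\Omega_k}(w)$; and (iii) upper and lower barriers built from translates of $\psi$, from affine pieces $\lan\la,x\ran+c$ with $\la\in\Lambda$, and from the ODE barriers $\phi_0,w_K,\widetilde{w}_K$ of \eqref{wwt} and Theorem \ref{test}. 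The comparison Lemma \ref{comp} then sandwiches $u_k$ between barriers of the form $V-C\le u_k\le V+C$ on compacta. Convexity of each $u_k$ is secured by Lemma \ref{Conv}, provided $\Omega_k$ is chosen so that $\p\Omega_k$ is a convex level set of $u_k$ of mean curvature at most $1$. Once convexity is in hand, \eqref{ts} combined with $(u_{ij})\ge 0$ yields $\sum_i\p_{ii}u_k\le 1$ and hence, by Cauchy--Schwarz for positive semidefinite matrices, $|\p_{ij}u_k|\le 1$, which is the $C^{1,1}$ bound feeding Lemma \ref{Converge}.

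With the uniform sandwich and $C^{1,1}$ bound in place, Lemma \ref{Converge} produces a subsequential limit $u$ that is an entire smooth convex strictly spacelike solution of \eqref{tsl} with $|u-V|\le C$ on $\R^n$; the blow-down $\lim_{r\to\infty}u(rx)/r=V(x)$ then follows immediately from homogeneity of $V$. The main obstacle I foresee is the simultaneous construction of the domains $\Omega_k$ and of the lower barriers near the \emph{corners} of $V$, namely the directions in which $|DV|=1$ is realised along a whole cone (i.e.\ where $\Lambda$ is large): the natural affine barriers $\lan\la,x\ran+c$ are only weakly spacelike there, so one must interpolate with damped smoothings that remain strictly spacelike, stay $C^0$-close to $V$, and still satisfy $H_{\p\Omega_k}\le 1$ so that Lemma \ref{Conv} applies uniformly in $k$. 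The light-ray rigidity of Lemma \ref{line} is exactly what rules out degeneration of the subsequential limit (no light ray can propagate inside $u$), so that the $C^{1,1}$ limit is promoted to a genuinely strictly spacelike smooth solution.
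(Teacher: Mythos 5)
Your skeleton (exhaustion by smooth bounded convex domains, Dirichlet problem with level-set boundary, convexity via Lemma \ref{Conv}, barriers from translates of the radial soliton $\psi$ and from the affine pieces $\lan\la,x\ran$, compactness via Lemma \ref{Converge}) is indeed the paper's strategy, and your verification that every $V\in Q\setminus Q_0$ satisfies the hypothesis of Lemma \ref{Converge} is correct, as are the $C^{1,1}$ bound from convexity and the final passage from $|u-V|\le C$ to the blow-down. But there is a genuine gap at the centre of the construction: your choice of domains and boundary data are mutually incompatible. The only existence result available (Theorem \ref{Dirichlet}) and the only convexity criterion available (Lemma \ref{Conv}) both require $\p\Om_k$ to be a \emph{level set} of $u_k$, i.e.\ constant boundary data; you acknowledge this, yet you also want boundary data close to $V|_{\p\Om_k}$. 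These can coexist only if $\p\Om_k$ is essentially a level set of $V$ itself, and level sets of $V$ are in general unbounded (e.g.\ $V=|x_1|$ in $\R^2$). With your domains $\{V+\psi<k\}$ and the constant value $k=V+\psi(|\cdot|)$ on the boundary, the comparison of Lemma \ref{comp} controls $u_k$ relative to $V+\psi(|\cdot|)$ rather than $V$: the upper barrier $\inf_{y\in\p\Om_k}\bigl(u_k(y)+\psi(|x-y|)\bigr)+n$ degenerates to $+\infty$ as $k\to\infty$ because $\sup_{\p\Om_k}\bigl(u_k-\psi(|\cdot-y|)\bigr)\to\infty$, so you retain no upper bound on compacta and the sandwich $|u_k-V|\le C$ fails. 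With non-constant data close to $V$, the variational maximizer is merely Lipschitz and you have no smooth convex approximants at all.

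The missing device is the truncation $\widetilde V(x)=\max\{V(x),|x|-K\}$ for fixed $K$, together with a \emph{double} limit. Because $\widetilde V\ge|x|-K$, its sublevel sets are bounded and, after mollifying at a fixed scale $\ep$, the outward derivative on high level sets satisfies $\p_{\xi_n}\widetilde V_\ep\ge\tfrac12$ while $|D^2\widetilde V_\ep|\le C_4/\ep$; this yields $H_{\p\Om_k}\le 2nC_4/\ep\le1$ upon choosing $\ep=2nC_4$, which is exactly the ``corner'' obstacle you flag but do not overcome (for $V$ alone, or for $V+\psi$, the normal derivative on level sets can be arbitrarily small and the mean-curvature bound is lost). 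Moreover the boundary value $k$ then agrees with $\widetilde V$ up to the fixed error $\ep$, so the affine lower barriers $\lan\la,x\ran-\ep$, $\lan\widetilde\la,x\ran-K-\ep$ and the upper barrier $\inf_y\{\widetilde V(y)+\psi(|x-y|)\}+n+\ep$ give $\widetilde V-\ep\le u_k\le\widetilde V+n+\ep$ uniformly. The limit $k\to\infty$ produces an entire solution $u_K$ sandwiched around $\widetilde V$ (which blows down to $|x|$, not $V$); only the second limit $K_i\to\infty$, using $\max\{V,|x|-K_i\}\to V$ locally and Lemma \ref{Converge} once more, produces the solution blowing down to $V$. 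Without the truncation and this second limit your argument does not close.
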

\begin{proof} (of Theorem \ref{Vcu}).
For any fixed positive constant $K$ and $V\in Q\setminus Q_0$, let $\widetilde{V}(x)\triangleq\max\{V(x),|x|-K\}$, then
$$\lim_{|x|\rightarrow\infty}\f{\widetilde{V}(x)}{|x|}=1.$$
Clearly, $\widetilde{V}(x)$ is a convex weakly spacelike function in $\R^n$ with $\widetilde{V}(0)=0$. Let $\widetilde{V}_\ep$ be a mollifier of $\widetilde{V}$ defined by
\begin{equation}\aligned
\widetilde{V}_\ep(x)=\int_{\R^n}\r(y)\widetilde{V}(x-\ep y)dy=\f1{\ep^n}\int_{\R^n}\r\left(\f{x-y}{\ep}\right)\widetilde{V}(y)dy,
\endaligned
\end{equation}
where $\ep$ is a positive constant to be determined below and $\r$ is the function defined in $\S$ 2. Then
\begin{equation}\aligned\label{Vtep}
\big|\widetilde{V}_\ep(x)-\widetilde{V}(x)\big|=\left|\int_{\R^n}\r(y)\big(\widetilde{V}(x-\ep y)-\widetilde{V}(x)\big)dy\right|\le\int_{\R^n}\r(y)\ep|y|dy\le\ep,
\endaligned
\end{equation}
and $|D\widetilde{V}_\ep(x)|\le1$ by \eqref{Dw}. Denote $\Om_{h,\widetilde{V}_\ep}=\{x\in\R^n|\ \widetilde{V}_\ep(x)<h\}$ and $\G_{h,\widetilde{V}_\ep}=\p\Om_{h,\widetilde{V}_\ep}$. The convexity of $\widetilde{V}_\ep$ implies that $\Om_{h,\widetilde{V}_\ep}$ is convex. Let $\{\xi_1,\cdots,\xi_n\}$ be an orthonormal coordinate transform of $\{x_1,\cdots,x_n\}$ such that $\f{\p}{\p\xi_n}$ is the outward normal vector of $\G_{h,\widetilde{V}_\ep}$. For sufficiently large $h>0$ and each $x\in\G_{h,\widetilde{V}_\ep}$,
\begin{equation}\aligned\label{xinV1}
\p_{\xi_n}\widetilde{V}_\ep=|D\widetilde{V}_\ep|\ge\f{\p}{\p r}\widetilde{V}_\ep\ge\f{\widetilde{V}_\ep(x)-\widetilde{V}_\ep(0)}{|x|}\ge\f{\widetilde{V}(x)-2\ep}{|x|}\ge\f12\ ,
\endaligned
\end{equation}
where we have used the convexity of $\widetilde{V}_\ep$ in the second inequality of \eqref{xinV1}.

Since
\begin{equation}\aligned
\p_{ij}\widetilde{V}_\ep(x)=&\f1{\ep^{n}}\int_{\R^n}\p_{x_ix_j}\left(\r\left(\f{x-y}{\ep}\right)\right)\widetilde{V}(y)dy
=-\f1{\ep^n}\int_{\R^n}\p_{y_j}\p_{x_i}\left(\r\left(\f{x-y}{\ep}\right)\right)\widetilde{V}(y)dy\\
=&\f1{\ep^n}\int_{\R^n}\p_{x_i}\left(\r\left(\f{x-y}{\ep}\right)\right)\p_{y_j}\widetilde{V}(y)dy,
\endaligned
\end{equation}
then one has
\begin{equation}\aligned
|\p_{ij}\widetilde{V}_\ep(x)|\le\f1{\ep^n}\int_{\R^n}\left|\p_{x_i}\left(\r\left(\f{x-y}{\ep}\right)\right)\right|dy
\le\f1{\ep}\int_{B_1}\big|D\r(x)\big|dx=\f{C_4}{\ep}.
\endaligned
\end{equation}
Here, $C_4$ is a constant depending only on $n$.

For any fixed $y\in\G_{h,\widetilde{V}_\ep}$, selecting $\{\xi_1,\cdots,\xi_{n-1}\}$ such that $$(\p_{\xi_i\xi_j}\xi_n)_{(n-1)\times(n-1)}\big|_y=\mathrm{diag}\{\la_1,\cdots,\la_{n-1}\}.$$
Since $\Om_{h,\widetilde{V}_\ep}$ is convex, then $\la_i\le0$ and $\p_{\xi_i}\xi_n\big|_y=0$ for $i=1,\cdots,n-1$. Taking derivations of the equation $\widetilde{V}_\ep(x)=h$ at $y$ gives
\begin{equation}\aligned\label{d1V1}
\p_{\xi_i}\widetilde{V}_\ep+\p_{\xi_n}\widetilde{V}_\ep\p_{\xi_i}\xi_n=0,
\endaligned
\end{equation}
and
\begin{equation}\aligned\label{d2V1}
\p_{\xi_i\xi_j}\widetilde{V}_\ep+\p_{\xi_i\xi_n}\widetilde{V}_\ep\p_j\xi_n+\p_{\xi_j\xi_n}\widetilde{V}_\ep\p_{\xi_i}\xi_n+\p_{\xi_n}
\widetilde{V}_\ep\p_{\xi_i\xi_j}\xi_n+\p_{\xi_n\xi_n}\widetilde{V}_\ep\p_{\xi_i}\xi_n\p_{\xi_j}\xi_n=0.
\endaligned
\end{equation}
From $\p_{\xi_i}\xi_n\big|_y=0$, one gets
\begin{equation}\aligned\label{V1ij}
\p_{\xi_i\xi_j}\widetilde{V}_\ep=-\p_{\xi_n}\widetilde{V}_\ep\p_{\xi_i\xi_j}\xi_n=-\p_{\xi_n}\widetilde{V}_\ep\la_i\de_{ij}.
\endaligned
\end{equation}
The convexity of $\widetilde{V}_\ep$ implies
$$\sum_{i=1}^{n-1}\p_{\xi_i\xi_i}\widetilde{V}_\ep\le\sum_{i=1}^{n}\p_{\xi_i\xi_i}\widetilde{V}_\ep
=\sum_{i=1}^{n}\p_{ii}\widetilde{V}_\ep\le\f{nC_4}\ep.$$
Then for sufficiently large $h$ the mean curvature of $\G_{h,\widetilde{V}_\ep}$ satisfies
\begin{equation}\aligned
0\le H_{\G_{h,\widetilde{V}_\ep}}=&-\f{1}{\sqrt{1+\sum_l(\p_{\xi_l}\xi_n)^2}}\sum_{i,j=1}^{n-1}\big(\de_{ij}
-\f{\p_{\xi_i}\xi_n\p_{\xi_j}\xi_n}{1+\sum_l(\p_{\xi_l}\xi_n)^2}\big)\p_{\xi_i\xi_j}\xi_n\\
=&-\sum_{i=1}^{n-1}\p_{\xi_i\xi_i}\xi_n
=-\sum_i\la_i=\sum_{i=1}^{n-1}\f{\p_{\xi_i\xi_i}\widetilde{V}_\ep}{\p_{\xi_n}\widetilde{V}_\ep}\le\f{nC_4}{\ep\p_{\xi_n}\widetilde{V}_\ep}
\le\f{2nC_4}{\ep}.
\endaligned
\end{equation}
Let $\ep=2nC_4$, and denote $\Om_k=\Om_{k,\widetilde{V}_\ep}$ for all sufficiently large $k$. Then we get a family of convex domains $\{\Om_k\}$ with smooth boundaries and
$0\le H_{\p\Om_k}\le1$.

By Theorem \ref{Dirichlet}, there is a smooth solution $u_k$ to \eqref{ts} in $\overline{\Om}_k$ with $u_k\big|_{\p \Om_k}=k$ and $|Du_k|<1$ in $\overline{\Om}_k$. Since $|H_{\p\Om_k}|\le1$, then by Lemma \ref{Conv}, $u_k$ is a smooth convex strictly spacelike function in $\Om_k$. By \eqref{Vtep}, $|\widetilde{V}(x)-\widetilde{V}_\ep(x)|\le \ep$, then for any $x\in\p\Om_k$, we have
\begin{equation}\aligned\label{Vuk}
\widetilde{V}(x)-\ep\le u_k(x)=k\le \widetilde{V}(x)+\ep.
\endaligned
\end{equation}
Let $\psi$ be the solution of \eqref{rts} with $\psi'(0)=0$ and $\psi(0)=0$. Combining \eqref{psir} and $|D\widetilde{V}|\le1$ a.e. gives
\begin{equation}\aligned\label{Vxy}
\widetilde{V}(x)-\widetilde{V}(y)\le|x-y|\le\psi(|x-y|)-\psi(0)+n=\psi(|x-y|)+n.
\endaligned
\end{equation}
Combining \eqref{Vuk} and \eqref{Vxy} implies $u_k(x)\le \widetilde{V}(x)+\ep\le \widetilde{V}(y)+\psi(|x-y|)+n+\ep$ for $x\in\p\Om_k$. By Lemma \ref{comp}, we have $u_k(x)\le \widetilde{V}(y)+\psi(|x-y|)+n+\ep$ for $x\in \Om_k$ and each fixed $y\in\R^n$. Hence,
\begin{equation}\aligned\label{up}
u_k(x)\le \inf_{y\in\R^n}\{\widetilde{V}(y)+\psi(|x-y|)\}+n+\ep\le \widetilde{V}(x)+n+\ep \qquad \mathrm{for}\ x\in \Om_k.
\endaligned
\end{equation}
There is a closed set $\Lambda\subset\Sp^{n-1}$ containing two points at least, such that $V(x)=\sup_{\la\in\Lambda}\lan\la,x\ran$. Then \eqref{Vuk} implies $\lan\la,x\ran-\ep\le u_k(x)$ on $\p \Om_k$ for every $\la\in\Lambda$. Applying Lemma \ref{comp} we get $\lan\la,x\ran-\ep\le u_k(x)$ in $\Om_k$. Similarly, for each $\widetilde{\la}\in\Sp^{n-1}$, $\lan\widetilde{\la},x\ran-K-\ep\le u_k(x)$ in $\Om_k$. Hence $\widetilde{V}(x)-\ep\le u_k(x)$ in $\Om_k$ by $|x|=\sup_{\tilde{\la}\in\Sp^{n-1}}\lan\tilde{\la},x\ran$. Combining \eqref{up}, we obtain
\begin{equation}\aligned\label{con}
\widetilde{V}(x)-\ep\le u_k(x)\le \widetilde{V}(x)+n+\ep \qquad \mathrm{for}\ x\in \Om_k.
\endaligned
\end{equation}

There is a subsequence of $\{u_k\}$ converging to a convex weakly spacelike function $u_K$ uniformly in any compact set in $\R^n$. Clearly, $\widetilde{V}(x)-\ep\le u_K(x)\le \widetilde{V}(x)+n+\ep$ for $x\in\R^n$. By Lemma \ref{Converge} and the definition of $\widetilde{V}$, we know that $u_K$ is an entire smooth convex strictly spacelike solution to \eqref{tsl}.

Note that $\widetilde{V}(x)=\max\{V(x),|x|-K\}$ depends on $K$. Let $\{K_i\}$ be a sequence in $\R^+$ satisfying $\lim_{i\rightarrow\infty}K_i=+\infty$. Let $u_{K_i}$ be an entire smooth convex strictly spacelike function to \eqref{tsl} with $$\max\{V(x),|x|-K_i\}-\ep\le u_{K_i}(x)\le \max\{V(x),|x|-K_i\}+n+\ep\ \qquad \mathrm{in}\ \R^n.$$ By Lemma \ref{Converge} and the definition of $V$, there is a subsequence of $\{u_{K_i}\}$ converging to an entire smooth convex strictly spacelike function $u$ satisfying \eqref{tsl} in $\R^n$. Moreover, $V(x)-\ep\le u(x)\le V(x)+n+\ep$ for $x\in\R^n$, which implies
$$\lim_{r\rightarrow\infty}\f{u(rx)}r=V(x).$$
Hence $u$ blows down to $V$.
\end{proof}

By Theorem 1.2 in \cite{J}, if $u$ is an entire smooth convex strictly spacelike solution to \eqref{tsl}, then $u$ must blow down to some function $V$ in $Q$. Namely, $V$ is a convex homogeneous of degree one function such that $|DV(y)|=1$ for any $y\in\R^n$ where the gradient exists.

Now, let us explore the extent of non-uniqueness for spacelike solutions for given projective data at infinity. Compared to Theorem 2 in \cite{T}, we obtain a strong result for translating solitons.
\begin{theorem}
Let $n\ge2$ and $f\in C^2(\Sp^{n-1})$. Then there exists a unique entire smooth convex strictly spacelike solution $u$ to \eqref{tsl} such that
$$\lim_{|x|\rightarrow\infty}\left(u(x)-|x|-f\big(\f x{|x|}\big)\right)=0.$$
\end{theorem}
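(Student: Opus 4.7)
The plan is to adapt the proof of Theorem \ref{Vcu} with $V(x)=|x|$, but with the approximating Dirichlet problems and barriers tuned to resolve the angular correction $f(x/|x|)$ rather than merely the radial cone. Extend $f$ to a smooth degree-zero homogeneous function $F$ on $\R^n\setminus\{0\}$ so that $|D^kF|=O(|x|^{-k})$ for $k=1,2$. For a large constant $C_0$ depending on $\|f\|_{C^2}$, set $G(x)=\sqrt{|x|^2+C_0^2}+F(x)$; then $G$ is smooth, strictly convex and strictly spacelike on $\R^n$, the sublevel sets $\Om_k:=\{G<k\}$ are smooth bounded convex domains exhausting $\R^n$, and a direct computation gives $\mathcal{L}\sqrt{|x|^2+C_0^2}=n/\sqrt{|x|^2+C_0^2}$, with $F$ contributing only $O(|x|^{-1})$ to $\mathcal{L}G$; hence $\mathcal{L}G\le 1$ for large $|x|$. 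The mean curvature of $\p\Om_k$ is $\le 1$ for large $k$, so Theorem \ref{Dirichlet} (after vertical translation) solves $\mathcal{L}u_k=1$ in $\Om_k$ with $u_k=k$ on $\p\Om_k$, and Lemma \ref{Conv} makes each $u_k$ strictly convex.

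Since $G$ is a supersolution ($\mathcal{L}G\le 1$) matching $u_k$ on $\p\Om_k$, Lemma \ref{max} yields $u_k\le G$ in $\Om_k$. For the lower bound, for each $y\in\R^n$ the shifted radial soliton $\psi_y(x):=\psi(|x-y|)$ is a solution ($\mathcal{L}\psi_y=1$), so Lemma \ref{max} gives $u_k(x)\ge\psi_y(x)+k-\max_{\p\Om_k}\psi_y$ throughout $\Om_k$. Sweeping $y$ along rays aligned with angular directions in $\Sp^{n-1}$ and using the $C^2$ regularity of $f$ to match the osculating spheres of $\p\Om_k$, together with the sharp asymptotic $\psi(r)=r-c_0+O(r^{-1})$ derivable from \eqref{rts} and \eqref{psir}, one obtains $u_k\ge G-\varepsilon(|x|)$ with $\varepsilon(R)\to 0$ as $R\to\infty$, uniformly in $k$. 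Lemma \ref{Converge} (applied with $W(x)=|x|+F(x)$, whose asymptotic slope is $1$) then produces a subsequence of $\{u_k\}$ converging in $C^1_{\rm loc}$ to an entire smooth convex strictly spacelike solution $u$ of \eqref{tsl}, and the two-sided barrier estimate passes to the limit to give $u(x)-|x|-f(x/|x|)\to 0$ as $|x|\to\infty$.

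For uniqueness, if $u$ and $\tilde u$ both satisfy \eqref{tsl} with the prescribed asymptotic, then $w:=u-\tilde u$ satisfies a linear homogeneous second-order elliptic equation without zero-th order term (the same computation as in the proof of Lemma \ref{max}), so the maximum principle on $B_R$ gives $\sup_{B_R}|w|\le\sup_{\p B_R}|w|$, and the common asymptotic forces $\sup_{\p B_R}|w|\to 0$ as $R\to\infty$; hence $w\equiv 0$. The main obstacle is the sharpness of the two-sided barrier: the construction in Theorem \ref{Vcu} only yielded $|u-V|=O(1)$, and upgrading this to $o(1)$ with the prescribed $C^2$ angular profile $f$ requires a fine expansion of the radial soliton $\psi$ at infinity combined with a careful sweep of translated solitons matching the level sets of $G$ up to first order, while keeping all comparison functions strictly spacelike.
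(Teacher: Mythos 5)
Your overall architecture coincides with the paper's (convex sublevel-set domains with $H_{\p\Om_k}\le1$, Theorem \ref{Dirichlet} plus Lemma \ref{Conv} for convex approximants, translated radial solitons as barriers, Lemma \ref{Converge} for the limit, and the maximum principle for uniqueness), but the decisive step --- producing \emph{sharp} two-sided barriers with asymptote exactly $|x|+f(x/|x|)$ --- is not actually carried out, and you say as much yourself ("the main obstacle..."). Concretely, the missing ingredient is the one the paper imports from Theorem 2 of Treibergs: since $f\in C^2(\Sp^{n-1})$, there exist $C^1$ maps $p_1,p_2:\Sp^{n-1}\to\R^n$ with $\lan p_1(\e),\xi-\e\ran\le f(\xi)-f(\e)\le\lan p_2(\e),\xi-\e\ran$ for all $\xi,\e$. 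A horizontal translation of the radial soliton, $\psi(|x+p_i(\e)|)$, has asymptote $r+\lan p_i(\e),\xi\ran$ along the ray $r\xi$, so exactly this two-sided inequality is what guarantees that $z_i(x,\e)=f(\e)-\lan p_i(\e),\e\ran+\psi(|x+p_i(\e)|)$ stays on the correct side of $|x|+f(x/|x|)$ for \emph{all} directions $\xi$ while touching it at $\xi=\e$; the envelopes $q_1=\sup_\e z_1$ and $q_2=\inf_\e z_2$ then asymptote to $r+f(\xi)$ from below and above. Your phrase "sweeping $y$ ... to match the osculating spheres of $\p\Om_k$" does not substitute for this: the translations must be chosen from the differential of $f$ (not from the geometry of $\Om_k$), and without the global inequality above a swept family of solitons need not stay below $u_k$ on all of $\p\Om_k$, so the constant $k-\max_{\p\Om_k}\psi_y$ in your comparison loses an $O(1)$ amount in general.

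There is also a gap in your upper barrier. $G(x)=\sqrt{|x|^2+C_0^2}+F(x)$ is singular at the origin (a degree-zero homogeneous $F$ is not even $C^1$ there unless it is constant), and $\mathcal{L}G\le1$ holds only for $|x|$ large. Lemma \ref{max} therefore cannot be applied on all of $\Om_k$; applying it on $\Om_k\setminus B_{R_0}$ reintroduces the inner boundary term $\sup_{\p B_{R_0}}(u_k-G)$, which is only $O(1)$ and destroys the claimed $o(1)$ upper estimate. The paper avoids this by using for the upper barrier, too, an infimum of \emph{globally defined exact solutions} $z_2(\cdot,\e)$, to each of which the comparison lemma applies on the whole domain. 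These two points (the Treibergs selection $p_1,p_2$ and the use of global exact solutions rather than a single singular supersolution) are what your proposal needs to become a proof; the remaining steps (domains, convexity, compactness, uniqueness) are sound and match the paper.
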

\begin{proof}
By Theorem 2 in \cite{T}, there are $C^1$ functions $p_1,p_2$ on $\Sp^{n-1}$ such that
$$\lan p_1(\e),\xi-\e\ran\le f(\xi)-f(\e)\le\lan p_2(\e),\xi-\e\ran\qquad \mathrm{for}\ \xi,\e\in\Sp^{n-1}.$$
Let $\psi$ be the smooth function satisfying \eqref{rts} with $\psi'(0)=0$ and $\lim_{r\rightarrow\infty}\big(\psi(r)-r\big)=0$. Let
$z_i(x,\e)=f(\e)-\lan p_i(\e),\e\ran+\psi(|x+p_i(\e)|)$ for $x\in\R^n$, $\e\in\Sp^{n-1}$ and $i=1,2$. By $\lim_{r\rightarrow\infty}\big(\psi(|r\xi+p_i(\e)|)-r\big)=\lan p_i(\e),\xi\ran$ uniformly for $\xi,\e\in\Sp^{n-1}$, we have
\begin{equation}\aligned
\lim_{r\rightarrow\infty}\big(z_1(r\xi,\e)-r\big)=&f(\e)-\lan p_1(\e),\e\ran+\lim_{r\rightarrow\infty}\big(\psi(|r\xi+p_1(\e)|)-r\big)\\
=&f(\e)-\lan p_1(\e),\e\ran+\lan p_1(\e),\xi\ran\le f(\xi).
\endaligned
\end{equation}
Similarly, we have $f(\xi)\le\lim_{r\rightarrow\infty}\big(z_2(r\xi,\e)-r\big)$, where equality holds when $\xi=\e$. Let $q_1(x)=\sup_{\e\in\Sp^{n-1}}z_1(x,\e)$ and $q_2(x)=\inf_{\e\in\Sp^{n-1}}z_2(x,\e)$, then $\lim_{r\rightarrow\infty}\big(q_i(r\xi)-r\big)=f(\xi)$ uniformly for $\xi\in\Sp^{n-1}$ and $i=1,2$.
Hence, there is a continuous function $s(r)>0$ with $s(r)\rightarrow0$ as $r\rightarrow\infty$ such that
\begin{equation}\aligned\label{q1q2}
q_1(r\xi)-s(r)\le r+f(\xi)\le q_2(r\xi)+s(r) \qquad\ \ \mathrm{for}\ \mathrm{any}\ \xi\in\Sp^{n-1}.
\endaligned
\end{equation}

We extend $f$ to $\tilde{f}$ by $\tilde{f}(x)=f(\f x{|x|})$ for $|x|\ge1$. A simply computation shows
\begin{equation}\aligned
\p_{ij}\big(|x|+\tilde{f}(x)\big)=&\f{\de_{ij}}{|x|}-\f{x_ix_j}{|x|^3}+\sum_{k,l}\f{f_{kl}}{|x|^2}\left(\de_{ki}-\f{x_kx_i}{|x|^2}\right)
\left(\de_{lj}-\f{x_lx_j}{|x|^2}\right)\\
&-\f{x_if_j+x_jf_i}{|x|^3}-\f{\sum_kx_kf_k}{|x|^3}\left(\de_{ij}-3\f{x_ix_j}{|x|^2}\right).
\endaligned
\end{equation}
For any $\a=(\a_1,\cdots,\a_n)\in\Sp^{n-1}$ with $\lan\a,x\ran=0$ and sufficiently large $|x|$, we have $\sum_{i,j}\p_{ij}\big(|x|+\tilde{f}(x)\big)x_ix_j=0$ and $$\f1{2|x|}\le\sum_{i,j}\p_{ij}\big(|x|+\tilde{f}(x)\big)\a_i\a_j\le\f2{|x|}.$$ Hence, the matrix $\left(\p_{ij}\big(|x|+\tilde{f}(x)\big)\right)_{n\times n}$ could be diagonal to $\mathrm{diag}\{0,\mu_1,\cdots,\mu_{n-1}\}$ with $\f1{2|x|}\le\mu_i\le\f2{|x|}$ for $1\le i\le n-1$ and sufficiently large $|x|$.

For $\ep>0$, let $$f_\ep(x)=\int\r(y)\left(|x-\ep y|+\tilde{f}(x-\ep y)\right)dy.$$
Then $f_\ep(x)$ is convex for sufficiently large $|x|$ and $\widetilde{\G}_k\triangleq\{x\in\R^n\big|\ |x|+\tilde{f}(x)=k\}$ has the mean curvature $0\le H_{\widetilde{\G}_k}\le\f{2(n-1)}{|x|}$ for large $k$.

Denote $B_1$ be the ball with radius 1 and centered at the origin in $\R^n$.
Since $f_{\ep}(x)\rightarrow|x|+\tilde{f}(x)$ in $C^2(K)$ for any compact set $K\subset\R^n\setminus B_1$, then there is a sufficiently small $\ep_k>0$ such that the mean curvature of the boundary of the smooth convex domain $\Om_k\triangleq\{x\in\R^n|\ f_{\ep_k}(x)<k\}$ satisfies $|H_{\p\Om_k}|\le1$ for each sufficiently large $k$. And we can assume $\lim_{k\rightarrow\infty}\ep_k=0$.

By Theorem \ref{Dirichlet}, there is a smooth solution $u_k$ to \eqref{ts} in $\Om_k$ with $u_k(x)=k$ on $\p \Om_k$ and $|Du_k|<1$ in $\Om_k$. Due to $|H_{\p\Om_k}|\le1$ and Lemma \ref{Conv}, $u_k$ is convex. Similar to \eqref{Vtep}, one gets $$\big|u_k(x)-|x|-\tilde{f}(x)\big|=\big|f_{\ep_k}(x)-|x|-\tilde{f}(x)\big|\le\ep_k\ \ \ \mathrm{for}\ x\in\p\Om_k.$$
Then for any $r\xi\in\p\Om_k$, combining \eqref{q1q2} gives
$$q_1(r\xi)-s(r)-\ep_k\le r+f(\xi)-\ep_k\le u_k(r\xi)\le r+f(\xi)+\ep_k\le q_2(r\xi)+s(r)+\ep_k.$$
Hence by Lemma \ref{comp} and the definitions of $q_1,q_2$ we conclude $$q_1(x)-\sup_{x\in\p\Om_k}s(|x|)-\ep_k\le u_k(x)\le q_2(x)+\sup_{x\in\p\Om_k}s(|x|)+\ep_k$$ for any $x\in \Om_{k}$ and sufficiently large $k$. By Lemma \ref{Converge}, we get an entire smooth convex strictly spacelike function $u$ satisfying \eqref{tsl} with $q_1(x)\le u(x)\le q_2(x)$, which implies
\begin{equation}\aligned\label{uxf}
\lim_{|x|\rightarrow\infty}\left(u(x)-|x|-f\big(\f x{|x|}\big)\right)=0.
\endaligned
\end{equation}
By Lemma \ref{max}, we know the uniqueness of the solution to \eqref{tsl} satisfying \eqref{uxf}.
\end{proof}

\section{An application to the conjecture of Aarons}

Up to a scaling we can assume $a=1$ in the equation \eqref{tsf}, namely,
\begin{equation}\aligned\label{tsc}
\div\left(\f{Du}{\sqrt{1-|Du|^2}}\right)=\f 1{\sqrt{1-|Du|^2}}+c.
\endaligned
\end{equation}
In this section, we only consider the case $c>0$. Using the notations in $\S$ 2, \eqref{tsc} is equivalent to
\begin{equation}\aligned\label{HnuEc}
H=c-\lan\nu,E_{n+1}\ran,
\endaligned
\end{equation}
and $\mathcal{L}u=1+c\sqrt{1-|Du|^2}$. Moreover, \eqref{tsc} in $\Om$ is the Euler-Lagrange equation of the variational problem
$\sup_{z\in \mathcal{C}(\varphi,\Om)}F_{c,\Om}(z)$. Sometimes, we denote $F_{c,\Om}$ by $F_c$ for simplicity if there is no ambiguity in the text. Let $v=\sqrt{1-|Du|^2}$. Compared to Theorem \ref{uni}, one has
\begin{theorem}\label{unic}
Let $\Om$ be a bounded domain in $\R^n$ and $u$ be a smooth strictly spacelike solution to \eqref{tsc} in $\overline{\Om}$. Set $M=\{(x,u(x))\big|\ x\in\Om\}$ and $\Si=\{(x,w(x))\big|\ x\in\Om\}$ with $w\in \mathcal{C}(u,\Om)$, then
\begin{equation}\aligned
\int_\Om e^{-w}\left(\sqrt{1-|Dw|^2}+c\right)dx\le\int_\Om e^{-u}\left(\sqrt{1-|Du|^2}+c\right)dx,
\endaligned
\end{equation}
where the above inequality attains equality if and only if $\Si=M$.
\end{theorem}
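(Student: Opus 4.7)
The proof follows the blueprint of Theorem \ref{uni} almost verbatim; the only change is that the vector field used there is no longer divergence-free, and its nonzero divergence produces exactly the extra contribution needed to account for the constant $c$ in \eqref{tsc}. First I would set up the mollification exactly as in the proof of Theorem \ref{uni}: extend $u$ smoothly to $\widetilde{\Om}$ with $\Om\subset\subset\widetilde{\Om}$ and $|Du|<1$ on $\overline{\widetilde{\Om}}$, set $\widetilde{w}_\ep=(w-u)*\r_\ep$ and $w_\ep=(1-C\ep)(\widetilde{w}_\ep+u)$, and form the smooth strictly spacelike hypersurfaces $M_\ep=\{(x,(1-C\ep)u(x))\mid x\in\Om^\ep\}$ and $\Si_\ep=\{(x,w_\ep(x))\mid x\in\Om^\ep\}$ with shared boundary. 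Let $D_\ep$ be the enclosed spacetime domain, with the orientation convention of Theorem \ref{uni}.

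Next, keep the same vector field
\begin{equation*}
Y=\sum_{i=1}^n\f{u_i}{v}e^{-x_{n+1}}E_i+\f{e^{-x_{n+1}}}{v}E_{n+1},\qquad v=\sqrt{1-|Du|^2},
\end{equation*}
with $u_i$ and $v$ treated as functions of $x_1,\dots,x_n$ only. Repeat the computation of $\overline{\div}(Y)$ from Theorem \ref{uni} verbatim; the only difference is that now \eqref{tsc} rewrites as $\sum_i\p_i(u_i/v)=1/v+c$, and hence
\begin{equation*}
\overline{\div}(Y)=c\,e^{-x_{n+1}}.
\end{equation*}
Applying the same Lorentzian Gauss formula and the same reversed Cauchy-Schwarz inequality on $\Si_\ep$ (still applicable since $Y$ is future-pointing timelike with $|\lan Y,Y\ran|=e^{-2x_{n+1}}$) produces, with the paper's orientation convention, the estimate
\begin{equation*}
-c\int_{D_\ep}e^{-x_{n+1}}dx_1\wedge\cdots\wedge dx_{n+1}-\int_{M_\ep}\lan Y,\nu_\ep\ran dV_{M_\ep}\ge\int_{\Si_\ep}e^{-x_{n+1}}dV_{\Si_\ep}.
\end{equation*}

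Now take $\ep\to 0$. Since $u$ is strictly spacelike on $\overline{\Om}$ and $w_\ep\to w$ uniformly with $Dw_\ep\to Dw$ a.e., the convergence of the second and third integrals is identical to the one in Theorem \ref{uni}: $\int_{M_\ep}\lan Y,\nu_\ep\ran dV_{M_\ep}\to-\int_\Om e^{-u}\sqrt{1-|Du|^2}\,dx$ and $\int_{\Si_\ep}e^{-x_{n+1}}dV_{\Si_\ep}\to\int_\Om e^{-w}\sqrt{1-|Dw|^2}\,dx$. The new spacetime integral is handled by Fubini in the $x_{n+1}$ direction, giving $\int_{D_\ep}e^{-x_{n+1}}dx_1\wedge\cdots\wedge dx_{n+1}\to\int_\Om|e^{-u(x)}-e^{-w(x)}|\,dx$. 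This yields the sharpened limit inequality
\begin{equation*}
\int_\Om e^{-u}\sqrt{1-|Du|^2}\,dx-\int_\Om e^{-w}\sqrt{1-|Dw|^2}\,dx\ge c\int_\Om|e^{-u}-e^{-w}|\,dx.
\end{equation*}
Adding $c\int_\Om(e^{-u}-e^{-w})\,dx$ to both sides and using the elementary identity $|a|+a=2a^+$ gives
\begin{equation*}
F_{c,\Om}(u)-F_{c,\Om}(w)\ge 2c\int_\Om(e^{-u}-e^{-w})^+dx\ge 0,
\end{equation*}
which is the desired inequality. For the equality case, $F_{c,\Om}(u)=F_{c,\Om}(w)$ forces both $(e^{-u}-e^{-w})^+\equiv 0$ a.e.\ (so $u\ge w$ a.e.) and equality in the Cauchy-Schwarz step in the limit. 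The latter forces $\nu_\Si$ to be a positive scalar multiple of $Y$ a.e.\ on $\Si$; comparing $E_i$ and $E_{n+1}$ components then gives $Dw=Du$ a.e.\ on $\Om$, and together with $w\big|_{\p\Om}=u\big|_{\p\Om}$ this forces $w\equiv u$, so $\Si=M$.

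The main technical obstacle is the careful tracking of the sign of $\int_{D_\ep}\overline{\div}(Y)$ in the Lorentzian Gauss step: in Theorem \ref{uni} this term vanishes identically, whereas here it contributes $-c$ times a positive Lebesgue integral, and it is precisely this negative sign that makes the resulting estimate go in the direction $F_{c,\Om}(u)\ge F_{c,\Om}(w)$ rather than the reverse. Apart from this bookkeeping, every other ingredient---the smooth extension of $u$, the mollification of $w-u$, the Gauss and reversed Cauchy-Schwarz steps, and the equality analysis---is a direct transcription of the proof of Theorem \ref{uni}.
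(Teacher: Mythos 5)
Your overall strategy --- keep the vector field $Y$ from Theorem \ref{uni}, accept that it is no longer divergence-free, and absorb the resulting volume term --- is viable, but your evaluation of that volume term contains a sign error that makes your key intermediate inequality false. The region $D_\ep$ enclosed by $M_\ep$ and $\Si_\ep$ splits into the part where $\Si_\ep$ lies above $M_\ep$ and the part where it lies below, and the orientation convention under which the paper's Gauss formula \eqref{Gaussf} holds (future-pointing on $M_\ep\setminus\Si_\ep$, past-pointing on $\Si_\ep\setminus M_\ep$) forces these two parts to enter the volume integral with \emph{opposite} signs. By Fubini the term to be added to the flux identity therefore converges to the signed quantity $c\int_\Om\bigl(e^{-u}-e^{-w}\bigr)dx$, not to $\pm c\int_\Om\bigl|e^{-u}-e^{-w}\bigr|dx$. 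Your ``sharpened limit inequality''
$$\int_\Om e^{-u}\sqrt{1-|Du|^2}\,dx-\int_\Om e^{-w}\sqrt{1-|Dw|^2}\,dx\ \ge\ c\int_\Om\bigl|e^{-u}-e^{-w}\bigr|\,dx$$
is false: take $w=u+\de\eta$ with $\eta\ge0$ smooth, compactly supported and nontrivial, and $\de$ small. Using \eqref{tsc} and one integration by parts, the left side equals $-c\de\int_\Om\eta e^{-u}dx+O(\de^2)$, while the right side equals $+c\de\int_\Om\eta e^{-u}dx+O(\de^2)$. The same test refutes your final bound $F_{c,\Om}(u)-F_{c,\Om}(w)\ge2c\int_\Om(e^{-u}-e^{-w})^+dx$: the left side is $O(\de^2)$ because $u$ is a critical point of $F_{c,\Om}$, while the right side is of order $\de$.

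With the sign corrected your route does close: one gets $\int_\Om e^{-w}\sqrt{1-|Dw|^2}\,dx\le\int_\Om e^{-u}\sqrt{1-|Du|^2}\,dx+c\int_\Om(e^{-u}-e^{-w})dx$, which upon adding $c\int_\Om e^{-w}dx$ to both sides is exactly $F_{c,\Om}(w)\le F_{c,\Om}(u)$, and the equality discussion via rigidity in the reversed Cauchy--Schwarz inequality then proceeds as you describe. For comparison, the paper sidesteps the signed volume term entirely by modifying the vector field to $Y=\sum_i\frac{u_i}{v}e^{-x_{n+1}}E_i+\left(\frac1v+c\right)e^{-x_{n+1}}E_{n+1}$, which is again divergence-free by \eqref{tsc}; the price is that the reversed Cauchy--Schwarz step is replaced by the pointwise estimate $-\lan Y,\xi\ran\ge c\,\xi_{n+1}e^{-x_{n+1}}+\sqrt{|\lan\xi,\xi\ran|}\,e^{-x_{n+1}}$ for timelike future-pointing $\xi$, which produces the $c$-term of the functional directly on $\Si$.
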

\begin{proof}
We only need to make a few changes in the proof of Theorem \ref{uni}. Define the vector field $Y$ by  $\sum_{i=1}^n\f{u_i}{v}e^{-x_{n+1}}E_i+\left(\f{1}{v}+c\right)e^{-x_{n+1}}E_{n+1},$ then
\begin{equation}\aligned
\overline{\div}(Y)=\sum_{i=1}^n\p_{x_i}\left(\f{u_i}{v}\right)e^{-x_{n+1}}-\left(\f{1}{v}+c\right)e^{-x_{n+1}}.
\endaligned
\end{equation}
If $\xi=(\xi_1,\cdots,\xi_{n+1})$ is a timelike future-pointing vector in $\R_1^{n+1}$, then
\begin{equation}\aligned
-\lan Y,\xi\ran=&-\sum_{i=1}^n\f{u_i\xi_i}ve^{-x_{n+1}}+\left(\f{1}{v}+c\right)\xi_{n+1}e^{-x_{n+1}}\\
=&c\xi_{n+1}e^{-x_{n+1}}+\left(\f{\xi_{n+1}}v-\sum_{i=1}^n\f{u_i\xi_i}v\right)e^{-x_{n+1}}\\
\ge& c\xi_{n+1}e^{-x_{n+1}}+\sqrt{\xi_{n+1}^2-\sum_{i=1}^n\xi_i^2}e^{-x_{n+1}}.
\endaligned
\end{equation}
By following the steps of the proof for Theorem \ref{uni}, we complete the proof.
\end{proof}
With Theorem \ref{unic} and the proof of Lemma \ref{comp}, we obtain the following result.
\begin{lemma}\label{compc}
Let $u$ be a strictly spacelike function to \eqref{tsc} in $\Om$ with $u\big|_{\p\Om}=\varphi$. Let $\varphi_1,\varphi_2$ be weakly spacelike functions in $\overline{\Om}$ and $W_1\le c$, $W_2\ge c$ are continuous functions in $\Om$. If $w_i\in \mathcal{C}(\varphi_i,\Om)$ and $F_{W_i,\Om}(w_i)=\sup_{w\in \mathcal{C}(\varphi_i,\Om)}F_{W_i,\Om}(w)$ for $i=1,2$, then
\begin{equation}\aligned\label{comp2c}
&u(x)\le w_1(x)+\sup_{\p\Om}\big(\varphi-\varphi_1\big)\qquad \mathrm{for}\ x\in\Om\\
&u(x)\ge w_2(x)+\inf_{\p\Om}\big(\varphi-\varphi_2\big)\qquad \mathrm{for}\ x\in\Om.
\endaligned
\end{equation}
\end{lemma}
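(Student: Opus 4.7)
The plan is to mimic the proof of Lemma \ref{comp} almost verbatim, replacing the role of Theorem \ref{uni} by Theorem \ref{unic} and the threshold $0$ for the inhomogeneity by $c$. The whole point of the sign conditions $W_1\le c$ and $W_2\ge c$ (rather than $\le 0$ and $\ge 0$) is that when one expands $F_{W_i,\Om^\pm}$ as $F_{c,\Om^\pm}$ plus the error term $\int e^{-w}(W_i-c)$, the monotonicity in Theorem \ref{unic} lines up with the variational inequality to force $\Om^\pm=\emptyset$.

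Concretely, for the first inequality set $C=\sup_{\p\Om}(\varphi-\varphi_1)$, fix a small $\ep>0$, put $w^\ast=w_1+C+\ep$, and define $\Om^+=\{x\in\Om\mid u(x)>w^\ast(x)\}$. Assuming $\Om^+\ne\emptyset$, one checks as in Lemma \ref{comp} that $\overline{\Om^+}\subset\Om$ and $u=w^\ast$ on $\p\Om^+$, so $u\in\mathcal{C}(w^\ast,\Om^+)$. Since the functional $F_{W_1,\Om}$ transforms by a constant factor $e^{-(C+\ep)}$ under the translation $w\mapsto w+C+\ep$, the maximality of $w_1$ for $F_{W_1,\Om}$ in $\mathcal{C}(\varphi_1,\Om)$ passes to maximality of $w^\ast$ for $F_{W_1,\Om^+}$ in $\mathcal{C}(u,\Om^+)$ (by extending any competitor by $w^\ast$ outside $\Om^+$). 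Hence $F_{W_1,\Om^+}(w^\ast)\ge F_{W_1,\Om^+}(u)$.

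The key algebraic step is to rewrite this inequality by splitting off the $c$-part:
\begin{equation*}
F_{c,\Om^+}(w^\ast)-F_{c,\Om^+}(u)\ \ge\ \int_{\Om^+}(e^{-u}-e^{-w^\ast})(W_1-c)\,dx.
\end{equation*}
On $\Om^+$ we have $u>w^\ast$, so $e^{-u}-e^{-w^\ast}<0$, and by assumption $W_1-c\le 0$, so the integrand on the right is nonnegative. Thus $F_{c,\Om^+}(w^\ast)\ge F_{c,\Om^+}(u)$. On the other hand, because $u$ is a smooth strictly spacelike solution to \eqref{tsc} on $\overline{\Om^+}$, Theorem \ref{unic} gives the reverse inequality $F_{c,\Om^+}(w^\ast)\le F_{c,\Om^+}(u)$ with equality only if $w^\ast\equiv u$ on $\Om^+$. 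Combined, both inequalities are equalities, forcing $w^\ast\equiv u$ on $\Om^+$, which contradicts the very definition of $\Om^+$. Therefore $\Om^+=\emptyset$, i.e.\ $u\le w_1+C+\ep$ on $\Om$; letting $\ep\to 0$ yields the first inequality of \eqref{comp2c}.

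The second inequality is proved by the symmetric construction: set $C'=\inf_{\p\Om}(\varphi-\varphi_2)$, $w_{\ast\ast}=w_2+C'-\ep$, and $\Om^-=\{x\in\Om\mid u(x)<w_{\ast\ast}(x)\}$, then run the same comparison with the sign condition $W_2-c\ge 0$ playing the analogous role. I expect no real obstacle beyond bookkeeping; the only point to be careful about is the preservation of the variational maximality under a constant shift and restriction to the subdomain $\Om^\pm$, which works here exactly because $F_{W_i,\Om}$ scales by a positive constant under $w\mapsto w+\mathrm{const}$, so competitors in $\mathcal{C}(u,\Om^\pm)$ can be lifted to competitors in $\mathcal{C}(\varphi_i+\text{const},\Om)$ without affecting the inequality.
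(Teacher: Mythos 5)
Your proposal is correct and follows exactly the route the paper intends: the paper gives no separate proof of Lemma \ref{compc}, stating only that it follows from Theorem \ref{unic} together with the proof of Lemma \ref{comp}, and your splitting $F_{W_1,\Om^+}=F_{c,\Om^+}+\int e^{-w}(W_1-c)$ with the sign analysis of $(e^{-u}-e^{-w^\ast})(W_1-c)$ is precisely the adaptation that makes that work. Your remark on why maximality of $w_1$ transfers to $w^\ast$ on $\Om^+$ is a point the paper leaves implicit even in Lemma \ref{comp}, so no gap there.
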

Theorem \ref{test} and its proof tell us that for any constant $C\in(-r,r)$ and $r>0$, the following ODE:
\begin{equation}\label{ODEc}\left\{\begin{split}
&\f{\phi''}{1-(\phi')^2}+\f{n-1}t\phi'=1+c\sqrt{1-(\phi')^2}\qquad \ \mathrm{for}\ t\in (0,r),\\
&\phi(r)=C,\ \ \ \phi(0)=0\ \ \ \mathrm{and}\ \  \ |\phi'|<1,\\
\end{split}\right.
\end{equation}
has a unique smooth solution $\phi_0$ in $(0,r)$. Furthermore, if $(1+c)r^2\le(n-1)C$ and $K_1>0$ is a constant with $w_{K_1}(r)=C$, then using Lemma \ref{max} gives
$$\f Crt\le\phi_0(t)\le w_{K_1}(t)\quad \mathrm{for}\ t\in[0,r].$$
If $C<0$, $r<\f1{1+c}$, $K_2<0$ is a constant with $\widetilde{w}_{K_2}(r)=C$ and $K_2^2\ge\f{(1+c)^2r^{2n+2}}{1-(1+c)^2r^2}$, then using Lemma \ref{max} gives $$\widetilde{w}_{K_2}(t)\le\phi_0(t)\le \f{C}rt\quad \mathrm{for}\ t\in[0,r].$$
Hence we have Lemma \ref{line} for the functional $F_c$, which could enable us to arrive at the following Lemma similar to Lemma \ref{Converge}.
\begin{lemma}\label{Convergec}
Let $\Om_k$ be a family of convex domains with smooth boundaries ($\p\Om_k$ may be empty), $\overline{\Om}_k\subset\Om_{k+1}$ and $\bigcup_{k\ge1}\Om_k=\R^n$. Let $u_k$ be a smooth convex strictly spacelike function to \eqref{tsc} in $\Om_k$. Denote $x_t=x+t(y-x)$ for $x,y\in\R^n$. Suppose that there is a function $W$ in $\R^n$ satisfying
$$\limsup_{t\rightarrow+\infty}\f{|W(x_t)-W(x_{-t})|}{|x_t-x_{-t}|}<1$$
for any $x\neq y\in\R^n$. If $\limsup_{k\rightarrow\infty}|u_k(x)-W(x)|\le C$ for some absolute constant $C$ and any $x\in\R^n$, then there is a subsequence of $\{u_k\}$ converging to an entire smooth convex strictly spacelike function $u$ to \eqref{tsc} in $\R^n$.
\end{lemma}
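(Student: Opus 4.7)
My plan is to mirror the proof of Lemma \ref{Converge} line by line, substituting the $F_c$-analogues at each step. The four ingredients needed — a uniform $C^{1,1}$ bound, variational characterization of the limit, ruling out light rays, and regularity — all have counterparts already established for \eqref{tsc}, so the adaptation should be almost mechanical.

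First, I would derive a uniform Hessian bound on $u_k$. Convexity of $u_k$ together with \eqref{tsc} (equivalently $\mathcal{L} u_k = 1 + c\sqrt{1-|Du_k|^2} \le 1+c$) gives
\begin{equation*}
0 \le \sum_{i} \partial_{ii} u_k \le \sum_{i,j}\left(\delta_{ij} + \tfrac{\partial_i u_k \partial_j u_k}{1-|Du_k|^2}\right)\partial_{ij} u_k = 1 + c\sqrt{1-|Du_k|^2} \le 1+c,
\end{equation*}
so $0 \le \partial_{ii} u_k \le 1+c$, and convexity yields $(\partial_{ij} u_k)^2 \le \partial_{ii} u_k \, \partial_{jj} u_k \le (1+c)^2$. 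Therefore $\{u_k\}$ is locally uniformly $C^{1,1}$, and by Arzel\`a--Ascoli a subsequence converges in $C^1_{\mathrm{loc}}(\R^n)$ to a convex function $u \in C^{1,1}(\R^n)$ with $|Du| \le 1$ and $W(x)-C \le u(x) \le W(x)+C$ on $\R^n$.

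Next I would verify that $u$ is a weak solution of the variational functional $F_c$ on every bounded domain $\Omega \subset \R^n$. Arguing exactly as in Lemma \ref{Converge}, I would pick a weakly spacelike maximizer $w_0 \in \mathcal{C}(u,\Omega)$ of $F_{c,\Omega}$ (existence from the Appendix) and, for each $\varepsilon > 0$, compare $w_0 - \varepsilon$ with $u_{k_j}$ on the superlevel set $\Omega^+_{j,\varepsilon} = \{u_{k_j} > w_0 - \varepsilon\}$ using Theorem \ref{unic} in place of Theorem \ref{uni}. Splitting $\Omega$ into $\Omega^+ = \{u > w_0\}$, $\Omega^- = \{u < w_0\}$ and the coincidence set, letting first $j \to \infty$ (with the factor $e^{\varepsilon}$ produced by the $e^{-w}$ weight) and then $\varepsilon \to 0$, gives $F_{c,\Omega}(w_0) \le F_{c,\Omega}(u)$, so $u$ maximizes $F_c$ on $\mathcal{C}(u,\Omega)$.

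For strict spacelikeness, I would use the $F_c$-version of Lemma \ref{line} that the author has just established from \eqref{ODEc} together with the barriers $\widetilde{w}_K$ and Lemma \ref{compc}. If $|u(x) - u(y)| = |x-y|$ for some $x \ne y$, then the line lemma forces $|u(x_t) - u(x_{-t})| = |x_t - x_{-t}|$ for all admissible $t$, which combined with the bound $|u - W| \le C$ contradicts the growth hypothesis
$\limsup_{t\to\infty} |W(x_t)-W(x_{-t})|/|x_t-x_{-t}| < 1$. Hence $|Du| < 1$ everywhere, and standard elliptic regularity (the equation becomes uniformly elliptic on compact sets) promotes $u$ from $C^{1,1}$ to smooth and from a variational weak solution to a classical solution of \eqref{tsc}.

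The only step that is not essentially routine is the variational comparison in the second paragraph: one must check that the extra $c\,e^{-w}$ term in $F_{c,\Omega}$ behaves correctly under the level-set splitting and the two successive limits $j\to\infty$, $\varepsilon\to 0$. This is precisely where Theorem \ref{unic} is doing the real work, and it is the part I would write out most carefully; everything else is bookkeeping carried over verbatim from Lemma \ref{Converge}.
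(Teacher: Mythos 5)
Your proposal is correct and is exactly the route the paper intends: the paper itself gives no separate proof of Lemma \ref{Convergec}, merely noting that the $F_c$-analogues of Theorem \ref{uni}, Lemma \ref{comp}, and Lemma \ref{line} (via \eqref{ODEc}) let one repeat the proof of Lemma \ref{Converge} verbatim. Your substitutions — the bound $\sum_i\partial_{ii}u_k\le 1+c\sqrt{1-|Du_k|^2}\le 1+c$ for the uniform Hessian estimate, Theorem \ref{unic} in the level-set comparison, and the $F_c$-version of the line lemma to rule out light rays — are precisely the needed modifications.
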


Since $g^{ij}u_{ij}=1+c\sqrt{1-|Du|^2}$, then compared with \eqref{bge} one has
\begin{equation}\aligned\label{bgec}
g^{ij}(|Du|^2)_{ij}+\f1{v^2}g^{ij}(|Du|^2)_i(|Du|^2)_j+\f cvu_i(|Du|^2)_i=2g^{ij}u_{ki}u_{kj}\ge0.\\
\endaligned
\end{equation}
Let $L$ be a second order differential operator as previously defined by
$$Lf=e^{u}\div_M(e^{-u}\na f)\qquad \mathrm{for}\ f\in C^2(\R^n).$$
Let $B$ be the second fundamental form and $B_{e_ie_j}=h_{ij}\nu$ as $\S$ 4. After few modifications in the proof of Lemma \ref{LhijB}, we have
\begin{equation}\aligned\label{Lhc}
Lh_{ij}=|B|^2h_{ij}-c\sum_kh_{ik}h_{jk}.
\endaligned
\end{equation}
If we consider the level set of $u$ satisfying \eqref{tsc}, then comparing to \eqref{ug}\eqref{ugg} gives
\begin{equation}\aligned\label{ugc}
H_\G u_\g+\f{u_{\g\g}}{1-u_{\g}^2}=1+c\sqrt{1-u_\g^2},
\endaligned
\end{equation}
which implies
\begin{equation}\aligned\label{uggc}
u_{\g\g}=\left(1-u_{\g}^2\right)\left(1+c\sqrt{1-u_\g^2}-H_\G u_\g\right).
\endaligned
\end{equation}
In view of the proof of Lemma \ref{Conv}, we have
\begin{lemma}\label{Convc}
If $u$ is a smooth strictly spacelike solution to \eqref{tsc} in $\overline{\Om}_{h,u}$ and $\G_{h,u}$ is a nonempty convex compact set in $\R^n$ with mean curvature $H_\G\le1$, then $u$ is convex in $\Om_{h,u}$.
\end{lemma}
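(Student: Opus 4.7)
The plan is to mirror the proof of Lemma \ref{Conv}, substituting \eqref{uggc} for \eqref{ugg} and \eqref{Lhc} for \eqref{Lh}. Because $c>0$, the extra terms all turn out to push in the favorable direction, so no genuinely new idea is needed.

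First I would check boundary convexity. Since $\gamma$ is the outward normal of $\Om_{h,u}=\{u<h\}$, one has $u_\gamma=|Du|>0$ on $\G_{h,u}$, and combining $0\le H_\G\le 1$, $u_\gamma<1$ and $c>0$ in \eqref{uggc} gives
\[
u_{\gamma\gamma}=(1-u_\gamma^2)\bigl(1+c\sqrt{1-u_\gamma^2}-H_\G u_\gamma\bigr)\ge (1-u_\gamma^2)(1-u_\gamma)\ge 0.
\]
Together with the convexity of $\G_{h,u}$ controlling the tangent--tangent block, this yields $D^2u\ge 0$ on $\G_{h,u}$, hence $(h_{ij})\big|_{\G_{h,u}}\ge 0$ by \eqref{hxixj}, just as in the translator case.

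Next I would run the interior maximum-principle argument for the operator $L$. Suppose for contradiction that the minimal principal curvature $\lambda$ of $M$ is negative somewhere in $\Om_{h,u}$. Since $\overline{\Om}_{h,u}$ is compact and $\lambda\ge 0$ on $\G_{h,u}$, the negative infimum $\lambda(p_0)<0$ is attained at some interior point $p_0$. Choose a unit eigenvector $\theta=\sum_i\theta_ie_i|_{p_0}$ of $(h_{ij})$ at $p_0$ and set $f(x)=h_{ij}(x)\theta_i\theta_j$ with $\theta_i$ held fixed; then $f\ge \lambda\ge \lambda(p_0)=f(p_0)$ in a neighborhood of $p_0$, so $Lf(p_0)\ge 0$. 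On the other hand, using $h_{ij}\theta_j=\lambda(p_0)\theta_i$ at $p_0$ and $|\theta|=1$, equation \eqref{Lhc} contracts to
\[
Lf(p_0)=\bigl(|B|^2h_{ij}-c\textstyle\sum_k h_{ik}h_{jk}\bigr)\theta_i\theta_j\Big|_{p_0}=\lambda(p_0)\bigl(|B|^2-c\lambda(p_0)\bigr)<0,
\]
the final strict inequality because $\lambda(p_0)<0$ makes both $\lambda(p_0)$ negative and $|B|^2-c\lambda(p_0)>0$. This contradiction forces $(h_{ij})\ge 0$ on $\overline{\Om}_{h,u}$, and via \eqref{hxixj} we conclude that $D^2u\ge 0$, i.e.\ $u$ is convex on $\Om_{h,u}$.

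The only structural novelty compared with Lemma \ref{Conv} is the extra quadratic term $-c\sum_k h_{ik}h_{jk}$ in \eqref{Lhc}. Contracted along the minimum eigenvector it contributes $-c\lambda(p_0)^2$, which \emph{reinforces} rather than reverses the sign of $Lf(p_0)$, so the maximum-principle step carries through without serious obstacle. The mildest bit of care is needed in the boundary step, where the hypothesis $H_\G\le 1$ enters precisely because it is what guarantees nonnegativity of $1+c\sqrt{1-u_\gamma^2}-H_\G u_\gamma$ — and this is the only place the bound $H_\G\le 1$ is actually used.
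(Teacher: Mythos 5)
Your proposal is correct and is exactly the adaptation the paper intends: the paper proves Lemma \ref{Convc} only by the remark ``in view of the proof of Lemma \ref{Conv}'', i.e.\ by substituting \eqref{uggc} for the boundary step and \eqref{Lhc} for the maximum-principle step, and your sign analysis of the extra terms (the $c\sqrt{1-u_\gamma^2}$ term on the boundary and the contraction $-c\lambda(p_0)^2$ at the interior minimum) is the right one. No differences worth noting.
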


By \cite{JLJ}, the elliptic equation \eqref{tsc} has a convex radially symmetric strictly spacelike solution $\psi(r)$, where $\psi(r)\in C^\infty([0,+\infty))$ satisfies the following ODE
\begin{equation}\aligned\label{rtsc}
\f{\psi''}{1-(\psi')^2}+\f{n-1}r\psi'=1+c\sqrt{1-(\psi')^2}\qquad \mathrm{for}\ r\in (0,+\infty)
\endaligned
\end{equation}
with $\psi'(0)=0$ in an appropriate coordinate system. Owing to the estimate of $\psi'$ in \cite{JLJ}, $\psi(r)-\psi(0)$ is uniformly bounded. Particularly, the ODE
$$\psi''=\left(1-(\psi')^2\right)\left(1+c\sqrt{1-(\psi')^2}\right)\qquad \mathrm{for}\ r\in (0,+\infty)$$
has a smooth strictly spacelike solution. Let $\Om_\si$ be a domain defined in Lemma \ref{tds}, if $u_\si$ is a smooth strictly spacelike solution to \eqref{tsc} in $\Om_\si$ with $u_\si\big|_{\p\Om_\si}=0$, then the gradient of $u_\si$ is uniformly bounded. Let
\begin{equation*}\aligned
\widetilde{Q}_\sigma f=\sum_{i,j}\left(\de_{ij}+\f{f_if_j}{1-|Df|^2}\right)f_{ij}-\sigma\big(1+c\sqrt{1-|Df|^2}\big).
\endaligned
\end{equation*}
If the maximal eigenvalue of the matrix $(h_{ij})$ is a sufficiently large positive number in any bounded domain $\Om$, then it must attain this maximal eigenvalue on the boundary $\p\Om$ by the equation \eqref{Lhc}.
Combining \eqref{uggc} and Lemma \ref{Convc} we follow the steps of the proof for Theorem \ref{Dirichlet} with corresponding modifications and obtain the following Theorem.
\begin{theorem}\label{Dirichletc}
Let $\Om$ be a bounded convex domain with smooth boundary in $\R^n$. Then the Dirichlet problem $\mathcal{L}u=1+c\sqrt{1-|Du|^2}$ in $\Om$ with $u\big|_{\p\Om}=0$ has a smooth strictly spacelike solution $u$ in $\overline{\Om}$.
\end{theorem}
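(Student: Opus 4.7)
The plan is to mirror the proof of Theorem \ref{Dirichlet} with the modifications dictated by the extra term $c\sqrt{1-|Du|^2}$ on the right hand side. For $\sigma\in(0,1]$ I use the family $\widetilde{Q}_\sigma$ defined in the excerpt and reduce solvability of $\widetilde{Q}_1 u=0$ with $u|_{\p\Om}=0$ to a Leray--Schauder fixed-point argument on a closed ball $\mathfrak{S}\subset C^{1,\be}(\overline{\Om})$, using exactly the truncated map $T^*$ from the proof of Theorem \ref{Dirichlet}. Smooth solvability of $\widetilde{Q}_\sigma u_\sigma=0$ with $u_\sigma|_{\p\Om}=0$ then reduces to three uniform-in-$\sigma$ a priori estimates, which I deliver via the rescaling $u(x)=\sigma u_\sigma(x/\sigma)$ that turns $u_\sigma$ into a smooth strictly spacelike solution of \eqref{tsc} on $\Om_\sigma=\{\sigma x\mid x\in\Om\}$.

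For the gradient bound, the radial solution $\psi$ of \eqref{rtsc} from \cite{JLJ} has $\psi(r)-\psi(0)$ uniformly bounded, so suitably translated copies of $\psi$ serve as upper and lower barriers from outside $\Om_\sigma$ in the same spirit as Lemma \ref{tds}, yielding $\max_{\overline{\Om}_\sigma}|Du|\le 1-\th$ for some $\th\in(0,1)$ depending only on $\mathrm{diam}(\Om)$; the maximum principle for the weighted subharmonicity \eqref{bgec} then propagates this to the interior. For the boundary $C^2$ estimate, convexity of $\p\Om$ together with $0\le H_{\p\Om_\sigma}\le C_1/\sigma$ feeds into \eqref{uggc} to control $u_{\g\g}$ on $\p\Om_\sigma$ up to a factor of $\sigma^{-1}$, and the mixed and tangential second derivatives are controlled by differentiating $u|_{\p\Om_\sigma}=0$ twice tangentially, as in \eqref{d1V1}--\eqref{V1ij}.

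The interior $C^2$ bound is where the extra term $-c\sum_k h_{ik}h_{jk}$ in \eqref{Lhc} bites, and this is the main obstacle: the clean identity $Lh_{ij}=|B|^2h_{ij}$ used in Lemma \ref{Conv} and Theorem \ref{Dirichlet} no longer holds. The remedy is the sign analysis hinted at in the excerpt. If the largest eigenvalue $\La$ of $(h_{ij})$ attains an interior maximum at $p_0$ along a unit eigenvector $\th$, then picking a geodesic orthonormal eigenframe at $p_0$ and setting $f(x)=\sum_{i,j}h_{ij}(x)\th_i\th_j$, one has $Lf(p_0)\le 0$, and \eqref{Lhc} gives
\begin{equation*}
0\ge Lf(p_0)=|B|^2\La-c\La^2\ge \La^2(\La-c)
\end{equation*}
since $|B|^2\ge \La^2$. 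Hence any interior maximum of $\La$ is bounded by $c$, so large values of $\La$ must be attained on $\p\Om_\sigma$, where they are controlled by the boundary estimate; the analogous argument applied to the smallest eigenvalue rules out large interior negative values. Via \eqref{hxixj} and $|Du|\le 1-\th$ this converts to $\max_{\overline{\Om}}|\p_{ij}u_\sigma|\le C_3$ uniformly in $\sigma$. With all three estimates in hand, the truncated-map argument of Theorem \ref{Dirichlet} carries over verbatim: Corollary 11.2 of \cite{GT} yields a fixed point $w\in\mathfrak{S}$ of $T^*$, the truncation is inactive by the same rescaling contradiction, so $w$ solves $\widetilde{Q}_1 w=0$, and standard elliptic regularity upgrades $w$ to the desired smooth strictly spacelike solution on $\overline{\Om}$.
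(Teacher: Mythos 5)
Your proposal is correct and follows exactly the route the paper intends: it reuses the rescaling $u(x)=\sigma u_\sigma(x/\sigma)$, the barrier-based gradient bound from the radial solution of \eqref{rtsc}, the boundary Hessian estimate via \eqref{uggc}, and the fixed-point scheme with the truncated map $T^*$, and your eigenvalue computation $0\ge Lf(p_0)=|B|^2\Lambda-c\Lambda^2\ge\Lambda^2(\Lambda-c)$ is precisely the justification behind the paper's remark that a sufficiently large maximal eigenvalue of $(h_{ij})$ must occur on the boundary. The paper gives only this sketch and defers to the proof of Theorem \ref{Dirichlet} "with corresponding modifications"; you have supplied those modifications correctly.
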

In conjunction with Lemma \ref{compc}, Lemma \ref{Convergec}, Lemma \ref{Convc}, \eqref{rtsc} and Theorem \ref{Dirichletc}, now we could construct many solutions to \eqref{tsc} along the proof of Theorem \ref{Vcu}.
\begin{theorem}
For $n\ge2$ and any $V\in Q\setminus Q_0$ there is an entire smooth convex strictly spacelike solution $u$ to \eqref{tsc} with $c>0$ such that $u$ blows down to $V$.
\end{theorem}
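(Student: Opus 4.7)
The plan is to follow the construction in the proof of Theorem \ref{Vcu} step by step, replacing each ingredient by its $c>0$ counterpart from this section. Fix $V\in Q\setminus Q_0$ and an auxiliary constant $K>0$, and set $\widetilde{V}(x)=\max\{V(x),\,|x|-K\}$; this is convex, weakly spacelike, and satisfies $\widetilde{V}(x)/|x|\to 1$ at infinity. Mollifying to a smooth convex $\widetilde{V}_\ep$ and choosing $\ep=2nC_4$ as in the proof of Theorem \ref{Vcu}, the sublevel sets $\Om_k=\{\widetilde{V}_\ep<k\}$ are convex smooth domains with $0\le H_{\p\Om_k}\le 1$ for all sufficiently large $k$. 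Theorem \ref{Dirichletc} provides a smooth strictly spacelike solution $u_k$ to \eqref{tsc} on $\overline{\Om}_k$ with $u_k|_{\p\Om_k}=k$, and Lemma \ref{Convc} combined with the curvature bound forces each $u_k$ to be convex in $\Om_k$.

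The central new point is the upper bound $u_k(x)\le \widetilde{V}(x)+n+\ep$. In Theorem \ref{Vcu} the barrier was a translate of the radial solution of \eqref{rts}, which grows like $r$; the radial solution of the modified ODE \eqref{rtsc} is only bounded and cannot play this role. The observation that unlocks the argument is that the old radial $\psi$ from \eqref{rts} still works as a comparison function for \eqref{tsc}: since it solves $\mathcal{L}\psi=1$, the Euler--Lagrange equation of $F_{0,\Om}=F_\Om$, Theorem \ref{uni} makes the translates $\phi_y(x)=\widetilde{V}(y)+\psi(|x-y|)$ maximizers of $F_{0,\Om_k}$, and because $0=W_1\le c$ the first inequality of Lemma \ref{compc} becomes available. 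The boundary chain
\[
u_k(x)=k\le \widetilde{V}(x)+\ep\le \widetilde{V}(y)+|x-y|+\ep\le \widetilde{V}(y)+\psi(|x-y|)+n+\ep
\]
on $\p\Om_k$ propagates to the interior, and setting $y=x$ (with $\psi(0)=0$) yields $u_k(x)\le \widetilde{V}(x)+n+\ep$ throughout $\Om_k$. The matching lower bound $u_k(x)\ge \widetilde{V}(x)-\ep$ is produced by the affine sub-barriers $\lan\la,x\ran-\ep$, first for $\la\in\Lambda$ and then for $\la\in\Sp^{n-1}$, exactly as in the proof of Theorem \ref{Vcu} but now via Lemma \ref{compc}.

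With uniform two-sided bounds plus convexity of each $u_k$, Lemma \ref{Convergec} with test function $W=\widetilde{V}$ yields, along a subsequence as $k\to\infty$, an entire smooth convex strictly spacelike solution $u_K$ of \eqref{tsc} on $\R^n$ satisfying $\widetilde{V}(x)-\ep\le u_K(x)\le \widetilde{V}(x)+n+\ep$. Running the construction over a sequence $K_i\to\infty$ and applying Lemma \ref{Convergec} a second time with $W=V$ produces in the limit an entire smooth convex strictly spacelike solution $u$ with $V(x)-\ep\le u(x)\le V(x)+n+\ep$ in $\R^n$; dividing by $r$ and sending $r\to\infty$ in $u(rx)/r$ then shows that $u$ blows down to $V$.

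The main obstacle in adapting the $c=0$ proof is precisely the bounded nature of the radial solution to \eqref{rtsc}, which eliminates the natural candidate for the upper barrier. The resolution is to retain the linearly growing radial solution from the unmodified equation \eqref{rts} as the barrier; it is exactly the flexibility of Lemma \ref{compc}, which permits $W_1\le c$ rather than requiring $W_1\le 0$, that legitimizes this substitution and lets the entire machinery of Theorem \ref{Vcu} carry over.
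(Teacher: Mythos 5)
Your proof is correct, and it supplies the details that the paper itself leaves implicit: the paper's entire argument for this theorem is the one-line instruction to rerun the proof of Theorem \ref{Vcu} using Lemma \ref{compc}, Lemma \ref{Convergec}, Lemma \ref{Convc}, \eqref{rtsc} and Theorem \ref{Dirichletc}. The one place where you genuinely depart from the paper's intended route is the upper barrier. By listing \eqref{rtsc} among its ingredients, the paper means to translate the \emph{forced} radial solution and compare exactly as in \eqref{up}; you instead keep the unforced radial solution $\psi$ of \eqref{rts} and exploit the one-sided hypothesis of Lemma \ref{compc}: $\phi_y=\widetilde{V}(y)+\psi(|\cdot-y|)$ maximizes $F_{0,\Om_k}$ by Theorem \ref{uni}, and $W_1=0\le c$, so the boundary estimate propagates inward. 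Both routes work, and yours has the mild advantage of relying only on \eqref{psir}, which is already proved, rather than on the asymptotics of the forced radial solution from \cite{JLJ}. Your stated motivation for the substitution is, however, off: the radial solution of \eqref{rtsc} is \emph{not} bounded --- its derivative tends to $1$ and it grows like $r-O(1)$, so it would serve as a barrier equally well; the paper's phrase ``$\psi(r)-\psi(0)$ is uniformly bounded'' can only be read as saying that $\psi(r)-\psi(0)-r$ is uniformly bounded. This misreading does not affect the validity of your argument, and the remaining steps (mollification and the mean curvature bound on $\p\Om_k$, Theorem \ref{Dirichletc}, convexity via Lemma \ref{Convc}, the linear sub-barriers, and the two successive applications of Lemma \ref{Convergec}) coincide with the paper's.
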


\bigskip

\section{Appendix   Semicontinuity for concave functionals}

Let $\{u_k\}$ be a sequence of weakly spacelike functions which converges to $u_0\in \mathcal{C}(\varphi,\Om)$ uniformly. Without loss of generality, we assume $u_k\rightharpoonup u_0$ weakly in Sobolev space $H^1(\Om)$. Since locally Lipschitz functions are differentiable almost everywhere, then by the theorems of Egorov and Lusin, for any $\de>0$ there is an open set $\widetilde{\Om}\subset\Om$ such that the measure of $\Om\setminus\widetilde{\Om}$ is less than $\de$ and $u_k,u_0\in C^1(\widetilde{\Om})$ for all $k\ge1$.

For $0<\ep<1$, the concavity of $\sqrt{1-|p|^2}$ gives
\begin{equation}\aligned\label{concave}
\sqrt{1-\ep^2|Du_k|^2}-\sqrt{1-\ep^2|Du_0|^2}\le-\f{\ep^2Du_0\cdot D(u_k-u_0)}{\sqrt{1-\ep^2|Du_0|^2}}\qquad \ \ \mathrm{in}\ \ \widetilde{\Om}.
\endaligned
\end{equation}
In fact, the inequality \eqref{concave} is equivalent to
\begin{equation}\aligned\label{rch}
\sqrt{1-\ep^2|Du_0|^2}\sqrt{1-\ep^2|Du_k|^2}\le1-\ep^2Du_0\cdot Du_k.
\endaligned
\end{equation}
The inequality \eqref{rch} is just the reversed Cauchy-Schwarz inequality (see\cite{O} for example).
Then
\begin{equation}\aligned
&\int_{\widetilde{\Om}} e^{-u_k}\sqrt{1-\ep^2|Du_k|^2}dx\\
=&\int_{\widetilde{\Om}} e^{-u_0}\sqrt{1-\ep^2|Du_k|^2}dx+\int_{\widetilde{\Om}} \big(e^{-u_k}-e^{-u_0}\big)\sqrt{1-\ep^2|Du_k|^2}dx\\
\le&\int_{\widetilde{\Om}} e^{-u_0}\sqrt{1-\ep^2|Du_0|^2}dx-\int_{\widetilde{\Om}} e^{-u_0}\f{\ep^2Du_0\cdot D(u_k-u_0)}{\sqrt{1-\ep^2|Du_0|^2}}dx\\
&+\int_{\widetilde{\Om}} \big(e^{-u_k}-e^{-u_0}\big)\sqrt{1-\ep^2|Du_k|^2}dx.
\endaligned
\end{equation}
Hence, we obtain
\begin{equation}\aligned
\limsup_{k\rightarrow\infty}\int_{\widetilde{\Om}} e^{-u_k}\sqrt{1-|Du_k|^2}dx\le&\limsup_{k\rightarrow\infty}\int_{\widetilde{\Om}} e^{-u_k}\sqrt{1-\ep^2|Du_k|^2}dx\\
\le&\int_{\widetilde{\Om}} e^{-u_0}\sqrt{1-\ep^2|Du_0|^2}dx.
\endaligned
\end{equation}
Let $\ep\rightarrow1^-$, then
\begin{equation}\aligned\label{A1}
\limsup_{k\rightarrow\infty}\int_{\widetilde{\Om}} e^{-u_k}\sqrt{1-|Du_k|^2}dx
\le\int_{\widetilde{\Om}} e^{-u_0}\sqrt{1-|Du_0|^2}dx.
\endaligned
\end{equation}
By the choice of $\widetilde{\Om}$ and \eqref{A1}, we conclude that
\begin{equation}\aligned\label{A3}
\limsup_{k\rightarrow\infty}\int_{\Om} e^{-u_k}\sqrt{1-|Du_k|^2}dx
\le\int_{\Om} e^{-u_0}\sqrt{1-|Du_0|^2}dx.
\endaligned
\end{equation}

\bigskip

\bibliographystyle{amsplain}

\begin{thebibliography}{10}

\bibitem{A} Mark A. S. Aarons, Mean curvature flow with a forcing term in Minkowski space. Calc. Var. Partial Differential Equations {\bf 25(2)} (2005), 205-246.

\bibitem{BS} Robert Bartnik and Leon Simon, Spacelike hypersurfaces with prescribed boundary values and mean curvature, Commun. Math. Phys. {\bf 87} (1982) ,131-152.

\bibitem{C} E. Calabi, Examples of Bernstein problems for some nonlinear equations, Proc. Symp. Global Analysis, U.C. Berkeley, (1968).

\bibitem{CT} H. I. Choi and A. E. Treibergs, Gauss maps of spacelike constant mean curvature hypersurfaces of Minkowski space, J. Differential Geom. {\bf 32} (1990), 775-817.

\bibitem{CM1} Tobias H. Colding and William P. Minicozzi II, Generic Mean Curvature Flow I; Generic Singularities, to appear in Ann. of Math., arXiv:0908.3788, Aug.26, (2009).

\bibitem{CY} S. Y. Cheng, S. T. Yau, Maximal space-like hypersurfaces in the Lorentz-Minkowski spaces, Ann. of Math. {\bf 104} (1976), 407-419.

\bibitem{E1} K. Ecker,  On mean curvature flow of spacelike hypersurfaces in asymptotically flat spacetime, J. Austral. Math. Soc. Ser A {\bf 55(1)} (1993),  41-59.

\bibitem{E} K. Ecker, Interior estimates and longtime solutions for mean curvature flow of noncompact spacelike hypersurfaces in Minkowski space, J. Differential Geom. {\bf 46(3)} (1997), 481-498.

\bibitem{E3} K. Ecker, Mean curvature flow of spacelike hypersurfaces near null initial data, Comm. Anal. Geom. {\bf 11(2)}(2003), 181-205.

\bibitem{EH} K. Ecker and G. Huisken, Parabolic methods for the construction of spacelike slices of prescribed mean curvature in cosmological spacetimes, Comm. Math. Phys. {\bf 135} (1991), 595-613.

\bibitem{ED} Eduardo Garc$\mathrm{\acute{i}}$a-R$\mathrm{\acute{i}}$o and Demir N.Kupeli, Semi-Riemannian Maps and Their Applications, Kluwer Academic Publishers, 1999.

\bibitem{GT} D. Gilbarg, N. S. Trudinger, Elliptic partial differential equations of second order. Berlin, Heidelberg, New York, Springer, 1977.

\bibitem{HS1} G. Huisken and C. Sinestrari, Mean curvature flow singularities for mean convex surfaces, Calc. Var. Partial Differential Equations {\bf8} (1999), 1-14.

\bibitem{HS2} G. Huisken and C. Sinestrari, Convexity estimates for mean curvature flow and singularities of mean convex surfaces, Acta Math. {\bf 183} (1999), 45-70.

\bibitem{HY} G. Huisken and S. T. Yau, Definition of center of mass for isolated physical system and unique foliations by stable spheres with constant curvature, Invent. Math. {\bf 124} (1996), 281-311.

\bibitem{J} Huai-Yu Jian, Translating solitons of mean curvature flow of noncompact spacelike hypersurfaces in Minkowski space. J. Differential Equations {\bf 220} (2006), no. 1, 147-162.

\bibitem{JLJ} Hongjie Ju, Jian Lu, Huaiyu Jian, Translating solutions to mean curvature flow with a forcing term in Minkowski space. Commun. Pure Appl. Anal. {\bf 9} (2010), no. 4, 963-973.

\bibitem{O} Barrett O'Neill, Semi-Riemannian geometry with applications to relativity, academic press, (1983).

\bibitem{T} Andrejs E. Treibergs, Entire spacelike hypersurfaces of constant mean curvature in Minkowski space, Invent. Math. {\bf 66} (1982), 39-56.

\bibitem{W1} Xu-Jia Wang, Convex solutions to the mean curvature flow, Annals of Mathematics {\bf 173} (2011), 1185-1239.

\bibitem{X} Y. L. Xin, Minimal Submanifolds and Related Topics, World Scientific Publ., (2003).




\end{thebibliography}

\end{document}